\tikzset{%
element/.style={draw, shape=circle, fill=white, inner sep=1.4pt}
}
\DeclareSymbolFont{bbold}{U}{bbold}{m}{n}
\DeclareSymbolFontAlphabet{\mathbbold}{bbold}
\theoremstyle{plain}
\newtheorem{thm}{Theorem}[section]
\newtheorem{lem}[thm]{Lemma}
\newtheorem{cor}[thm]{Corollary}
\newtheorem{pro}[thm]{Proposition}
\newtheorem{example}[thm]{Example}
\theoremstyle{definition}
\newcommand{\up}[1]{\textup{#1}}
\newcommand{\bigand}{\operatornamewithlimits{\hbox{\Large$\&$}}}
\newcommand{\bell}{\boldsymbol{\ell}}
\newcommand{\bk}{\mathbf{k}}
\newcommand{\bp}{\mathbf{p}}
\newcommand{\bs}{\mathbf{s}}
\newcommand{\bv}{\mathbf{v}}
\newcommand{\bw}{\mathbf{w}}
\begin{document}

\title[Limit varieties]{Flat extensions of groups and limit varieties of ai-semirings}

\author{Miaomiao Ren}
\address{School of Mathematics, Northwest University, Xi'an, 710127, Shaanxi, P.R. China}
\email{miaomiaoren@nwu.edu.cn}
\author{Marcel Jackson}
\address{Department of Mathematics and Statistics\\ La Trobe University\\ Victoria  3086\\
Australia} \email{M.G.Jackson@latrobe.edu.au}
\author{Xianzhong Zhao}
\address{School of Mathematics, Northwest University, Xi'an, 710127, Shaanxi, P.R. China}
\email{zhaoxz@nwu.edu.cn}
\author{Donglin Lei}
\address{School of Mathematics, Northwest University, Xi'an, 710127, Shaanxi, P.R. China}
\email{dllei@nwu.edu.cn}

\subjclass[2010]{16Y60, 03C05, 20D15}
\keywords{limit variety, flat extension, flat semiring, minimal nonabelian $p$-group}

\thanks{Miaomiao Ren is supported by the National Natural Science Foundation of China (11701449)
and the Natural Science Foundation of Shaanxi Province (2022JM-009). Xianzhong Zhao is supported by the National Natural Science Foundation of China (11971383, 11571278).}
\begin{abstract}
The present paper is a continuation of \cite{jrz} and is devoted to the study of limit varieties
of additively idempotent semirings.
A limit variety is a nonfinitely based variety whose proper subvarieties are all finitely based.
We present concrete constructions for one infinite family of limit additively idempotent semiring varieties, and one further ad hoc example.  Each of these examples can be generated by a finite flat semiring, with the infinite family arising by a way of a complete characterisation of limit varieties that can be generated by the flat extension of a finite group.  We also demonstrate the existence of other examples of limit varieties of additively idempotent semirings, including one further continuum-sized family, each with no finite generator, and two further ad hoc examples.  While an explicit description of these latter examples is not given, one of the examples is proved to contain only trivial flat semirings.
\end{abstract}

\maketitle
\section{Introduction and preliminaries}
A variety is called \emph{finitely based} if it can be defined by finitely many identities.
Otherwise, it is called \emph{nonfinitely based}.  The finite basis problem for
various varieties has been intensively studied since Lyndon's work \cite{lyn51,lyn56} in the 1950s; see the surveys such as \cite{mc02, vol} for example.

A variety is said to be \emph{hereditarily finitely based} if all of its subvarieties are finitely based.
A variety is called \emph{limit} if it is nonfinitely based, but each of its proper
subvarieties is finitely based. In other words, a variety is limit if and only if
it is a minimal nonfinitely based variety.  By Zorn's Lemma, every nonfinitely based variety
contains a limit variety. It follows that a variety is hereditarily finitely based if and only if
it contains no limit subvarieties.  Therefore, classifying hereditarily finitely based varieties in a certain sense reduces
to classifying limit varieties.  As observed by
Lee and Volkov \cite{lv}, there are a number of challenges to classifying limit varieties.  In general, the problem of determining if a finite algebra is hereditarily finitely based is undecidable \cite[Theorem~7.1]{jac05}, and even describing a single example of a limit variety can be difficult.  Despite continuum many limit varieties of groups known to exist, no explicit description of even a single one is known; see Kozhevnikov \cite{koz, koz2}.  Within semigroups, and more recently monoids, there has been some progress
on finding explicit examples; see \cite{gu20, glz21, gs21,  jac05, lv, sap, ol21,zl16}.  We mention in particular \cite{gs21}, the culmination of several of these references, where a complete classification is achieved for finitely generated $\mathscr{J}$-trivial monoid limit varieties: there are precisely~7 of them. However, to the best of our knowledge,
examples of limit varieties of ai-semirings have not been known.
The aim of this paper is to present a series of examples of such varieties.  In Theorem \ref{thm1} we present a concrete construction for a countably infinite family of finitely generated limit varieties and in Theorem \ref{thm2} we provide an explicit description of a combinatorial example (that is, finitely generated and containing only trivial subgroups for its multiplicative reduct).  We also demonstrate the existence of a continuum of limit varieties that cannot be generated by any finite example (Theorem \ref{thm:infinitegroup}) as well as the existence of two further ad hoc examples (Theorem \ref{thm:maxplus} and Theorem \ref{thm:anotherlimit}), one of which we show cannot be generated by a flat semiring.

A \emph{semiring} $(S, +, \cdot)$ is an algebra with two binary operations $+$ and $\cdot$ such that the additive reduct $(S, +)$ is a commutative semigroup,
the multiplicative reduct $(S, \cdot)$ is a semigroup and $S$ satisfies the distributive laws
\[
(x+y)z\approx xy+xz,\quad x(y+z)\approx xy+xz.
\]
The set of all natural numbers under usual addition and multiplication is a natural
example of a semiring, but one can also easily find many other examples of semirings
in almost all branches of mathematics.
Semirings can be regarded as a common generalization of both rings and distributive lattices.
A semiring is called an \emph{additively idempotent semiring} (ai-semiring for short) if its additive reduct is a semilattice, that is,
a commutative idempotent semigroup. This family of semirings includes the Kleene semiring of regular languages (see Conway \cite{con}),
the max-plus algebra (see Aceto, \'Esik and Ing\'olfsd\'ottir \cite{aei}),  the semiring of all binary relations on a set
(see Andr\'eka and Mikulas \cite{andmik}) and the matrix semiring over an ai-semiring (see Baeth and Gotti \cite{bg}).
These and other similar algebras have broad applications in tropical geometry, information science and theoretical computer science
(see \cite{gl, go, ms}).

A number of papers in the literature have considered various ai-semiring varieties, for example,
see \cite{aei, dol07, gpz05, jrz, pas05, rzs4, rzw, sr, vol21, zrcsd} amongst many others.
In particular, Dolinka \cite{dol07} found the first example of a finite nonfinitely based ai-semiring.
Pastijn et al. \cite{gpz05, pas05} showed that there are exactly 78 ai-semiring varieties satisfying
the identity $x^2\approx x$, which are all finitely based.
Ren et al. \cite{rzw} answered the finite basis problem for ai-semiring varieties satisfying
the identity $x^3\approx x$ and proved that there are 179 such varieties.
Gusev and Volkov \cite{gv22a, gv22b} considered the equational theory of some ai-semirings,
which include the power semirings of finite groups and the
finite ai-semirings whose multiplicative reducts are inverse semigroups.
Volkov \cite{vol21} proved that the ai-semiring $B_2^1$ whose multiplicative reduct is the 6-element Brandt monoid has no finite basis
for its identities. Zhao et al. \cite{zrcsd} detailly investigated ai-semirings of order 3
and showed that all but possible one (which is denoted by $S_7$) are finitely based. Recently,
Jackson et al. \cite{jrz} presented some general results on the finite basis problem for ai-semiring varieties.
As applications, it is proved that many finite ai-semirings with finitely based semigroup reducts are nonfinitely based,
which include the semirings $B_2^1$ and $S_7$.
The present paper is a further development and refinement of \cite{jrz}.
We shall systematically study limit varieties of ai-semirings.
For this prupose, the following notions and notations are necessary.

By a \emph{flat semiring} we mean an ai-semiring such that its multiplicative reduct has a zero element $0$
and $a+b=0$ for all distinct elements $a$ and $b$.
Jackson et al.~\cite[Lemma 2.2]{jrz} observe that a semigroup with $0$ becomes a flat semiring with the top element $0$ if and only if it is
$0$-cancellative: $ab=ac\neq0$ implies $b=c$ and $ba=ca\neq0$ implies $b=c$
for all $a, b, c\in S$.
If $S$ is a cancellative semigroup, then $S^0$ is $0$-cancellative
and becomes a flat semiring. It is called the {\it flat extension}\footnote{We mention that this notation is also used for flat extensions of other objects, but this article concerns only the case when the flat extension forms a semiring.} of $S$
and is denoted by $\flat(S)$.
The flat extensions of groups are particularly interesting and play an important role in this paper.
%

Let $W$ be a nonempty subset of the free commutative semigroup over $X$,
and let $S_c(W)$ denote the set of all nonempty subwords of words in
$W$ together with a new symbol $0$. Define a binary operation $\cdot$ on $S_c(W)$ by the rule
\begin{equation*}
u\cdot v=
\begin{cases}
uv& \text{if }~uv\in S_c(W)\setminus \{0\}, \\
0& \text{otherwise.}
\end{cases}
\end{equation*}
Then $(S_c(W), \cdot)$ forms a semigroup with a zero element $0$.
It is easy to see that $(S_c(W), \cdot)$ is $0$-cancellative and so $S_c(W)$ becomes a flat semiring.
In particular, if $W$ consists of a single word $w$ we shall write $S_c(W)$ as $S_c(w)$.  If we allow the empty word in this construction, then the semigroup reduct is a monoid, and we use the notation $M_c(W)$.  Finally, we also mention the non-commutative variants of these (using the free semigroup and free monoid over $X$), which are simply denoted by $S(W)$ and $M(W)$.
Correspondingly, we have the notations $S(w)$ and $M(w)$.

For other notations and terminology used in this paper, the reader
is referred to Jackson et al.~\cite{jrz} for background on flat semirings, and to Burris and Sankappanavar \cite{bur} for
information concerning universal algebra. We shall assume that the
reader is familiar with the basic results in these areas.

\section{Flat extensions of groups}\label{sec:groups}
In this section we provide an infinite series of limit ai-semiring varieties from the approach of
flat extensions of groups.

For a detailed treatment of the following concepts one may consult a text such as Gorbunov \cite{gor}, or reference such as \cite{jac08} with similar context to the present article.  Recall that the \emph{quasivariety} generated by a class of similar algebras $K$ is $\mathsf{Q}(K)=\mathsf{SPP}_{\rm u}(K)$, where $\mathsf{S}$, $\mathsf{P}$ and $\mathsf{P}_{\rm u}$ denote closure under isomorphic copies of substructures, direct products and ultraproducts, respectively.   Equivalently, it is the class of algebras of the same signature as $K$ that satisfy the quasiequations holding in $K$: universally quantified sentences of the form $\big(\bigand_{1\leq i\leq n}\alpha_i\big)\Rightarrow \alpha_0$ where $\alpha_0,\dots,\alpha_n$ are equalities and $n\geq 0$.  As with varieties, we may speak of finitely based (or finitely axiomatised) quasivarieties, and of limit quasivarieties.  We apply the phrase \emph{finitely q-based} to algebras or classes whose quasiequations admit a finite basis.  We note also that when  $K$ is a finite class of finite algebras then $\mathsf{Q}(K)=\mathsf{SP}(K)$.  The notation $\mathsf{V}(K)$ will denote the variety generated by $K$.

Let us extend $\flat$ extend to classes $K$ by $\flat(K)=\{\flat(G)\mid G\in K\}$ and let $B(d)$ denote the variety (hence quasivariety) of groups whose exponent divides the number~$d$; that is, satisfying $x^dy\approx yx^d\approx y$.   The following result is a restriction of Jackson \cite[Corollary~5.4, Theorem~5.12]{jac08}.\footnote{Corollary 5.4 and Theorem 5.12 of \cite{jac08} are expressed in terms of what is called \emph{pointed semidiscriminator extensions} of partial algebras, producing \emph{pointed semidiscriminator varieties} and refers to universal Horn classes rather than quasivarieties.  As explored in \cite[\S7]{jac08} however, flat extensions of groups of finite exponent are term equivalent to the corresponding pointed semidiscriminator extensions, and nonempty universal Horn classes are quasivarieties because all groups contain the trivial subgroup.  If the empty universal Horn class, definable by $\forall x\ x\not\approx x$, is also allowed, then the isomorphism in Theorem \ref{thm:correspondence} extends to include the trivial variety in its range.}
\begin{thm}\label{thm:correspondence}
\begin{enumerate}
\item
The operator $K\mapsto \mathsf{V}(\flat(K))$ is a lattice isomorphism between the lattice of subquasivarieties of $B(d)$ and the lattice of nontrivial subvarieties of $\mathsf{V}(\flat(B(d)))$.
\item
The operator $K\mapsto \mathsf{V}(\flat(K))$ preserves local finiteness, finite axiomatisability and nonfinite axiomatisability, and for any class $K\subseteq B(d)$, the class of subdirectly irreducible members of $\mathsf{V}(\flat(K))$ is the isomorphism closure of $\{\flat(G)\mid G\in \mathsf{Q}(K)\}$.
\end{enumerate}
\end{thm}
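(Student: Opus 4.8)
The plan is to reduce everything to the subdirectly irreducible members of $\mathsf{V}(\flat(B(d)))$ and to exploit the fact that each $\flat(G)$ is simple. First I would compute $\Con(\flat(G))$ for a nontrivial group $G$ and show that $\flat(G)$ is in fact simple: if a congruence $\theta$ identifies two distinct elements, then the flat addition forces a collapse to the top element. Concretely, if $a\mathrel\theta b$ with $a\ne b$ nonzero, then adding $a$ to both sides gives $a=a+a\mathrel\theta b+a=0$ (using $a+b=0$ for distinct $a,b$), and multiplicative absorption $g0=0$ together with transitivity of the action $g\mapsto ga$ then drags every element into the class of $0$; the case $a\mathrel\theta 0$ is similar. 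Hence the only nontrivial congruence is the full one, so $\flat(G)$ is simple, and in particular subdirectly irreducible.

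Next I would establish that $\mathsf{V}(\flat(B(d)))$ is congruence distributive by exhibiting the relevant term operations. This is exactly where the hypothesis on the exponent is used: in a group of exponent dividing $d$ the inverse is the term $x^{d-1}$, and the flat addition behaves as an equality test, returning the common value of $x,y$ when $x=y$ and the top element $0$ when $x\ne y$. Assembling these yields a (pointed semi)discriminator term, and hence J\'onsson terms. With congruence distributivity in hand, J\'onsson's Lemma gives that every subdirectly irreducible member of $\mathsf{V}(\flat(K))$ lies in $\mathsf{HSP}_{\rm u}(\flat(K))$.

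The core step is then to identify these subdirectly irreducibles precisely as $\{\flat(G)\mid G\in\mathsf{Q}(K)\}$. I would check that $\flat$ commutes with ultraproducts, that subalgebras of $\flat(G)$ which are again flat extensions correspond to subgroups of $G$, and --- the delicate point --- that products are accounted for: although $\flat(G\times H)$ is not a subalgebra of $\flat(G)\times\flat(H)$ (the ``partially zero'' pairs $(g,0)$ violate flatness of the sum), it is a homomorphic image of the flat subalgebra they generate, and it is itself simple. Tracking how $\mathsf{HSP}_{\rm u}$ on the semiring side translates into closure under $\mathsf{S}$, $\mathsf{P}$ and $\mathsf{P}_{\rm u}$ on the group side is what forces the quasivariety operator $\mathsf{Q}=\mathsf{SPP}_{\rm u}$ to appear, and is the main obstacle. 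Granting this, part (1) follows: since the variety is congruence distributive, each nontrivial subvariety is generated by its simple members, so $K\mapsto\mathsf{V}(\flat(K))$ is a bijection between subquasivarieties of $B(d)$ and nontrivial subvarieties of $\mathsf{V}(\flat(B(d)))$, with inverse sending a subvariety to the class of groups $G$ with $\flat(G)$ in it; order is visibly preserved in both directions.

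For part (2), local finiteness transfers because $\flat(G)$ is finite iff $G$ is and $\flat$ interacts well with finite generation. The transfer of (non)finite axiomatisability is the substantive remaining claim: using the explicit term-level encoding from the second step, a finite quasiequational basis for $K\subseteq B(d)$ can be rewritten as a finite identity basis for $\mathsf{V}(\flat(K))$, and conversely any identity basis can be pulled back to a quasiequational description of $K$, so finiteness of one basis matches finiteness of the other. Since this whole development is precisely the content of the pointed-semidiscriminator machinery of \cite{jac08}, the cleanest rigorous route --- and the one I would ultimately write up --- is to verify the term equivalence flagged in the footnote and then cite \cite[Corollary~5.4, Theorem~5.12]{jac08} directly.
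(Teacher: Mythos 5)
Your closing plan---verify the term equivalence with pointed semidiscriminator extensions and then cite \cite[Corollary~5.4, Theorem~5.12]{jac08}---is exactly what the paper does: the paper offers no internal proof of this theorem, only the footnoted translation between flat extensions of groups of exponent dividing $d$ and pointed semidiscriminator extensions, plus the observation that nonempty universal Horn classes of groups are quasivarieties. So as a final write-up your proposal would coincide with the paper. The self-contained sketch that forms the bulk of your proposal, however, contains a step that fails outright and cannot be repaired.

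The fatal step is the claim that $\mathsf{V}(\flat(B(d)))$ is congruence distributive, followed by the appeal to J\'onsson's Lemma to locate the subdirectly irreducibles. The trivial group lies in $B(d)$ for every $d$, and its flat extension is the two-element algebra on $\{1,0\}$ in which \emph{both} operations coincide with the semilattice join (with $0$ on top): $1+0=0=1\cdot 0$. This algebra is term equivalent to the two-element semilattice, so $\mathsf{V}(\flat(B(d)))$ contains a subvariety term equivalent to the variety of all semilattices, which by the Freese--Nation theorem satisfies no nontrivial congruence identity whatsoever---in particular $\mathsf{V}(\flat(B(d)))$ is never congruence modular, let alone distributive, for any $d$. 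Your term $(x+y)^{d}z$ does act as a pointed semidiscriminator on each $\flat(G)$ (returning $z$ when $x=y\neq 0$ and $0$ otherwise), but this is genuinely weaker than the discriminator: the ``else'' branch returns the absorbing point $0$ rather than $x$, and no discriminator or J\'onsson terms can exist, which is precisely why \cite{jac08} had to build bespoke machinery instead of invoking classical discriminator-variety theory. Consequently your ``core step''---identifying the subdirectly irreducibles of $\mathsf{V}(\flat(K))$ via $\mathsf{HSP}_{\rm u}(\flat(K))$---has no foundation, and with it the derivations of both parts (1) and (2) collapse. The correct identification of the subdirectly irreducibles proceeds instead through the structure theory of flat varieties (every subdirectly irreducible in a variety generated by flat semirings is itself flat with a unique $0$-minimal multiplicative ideal; compare Lemma~\ref{lem: flat}(i), which rests on \cite[Lemma 2.1]{jrz}) together with the universal-Horn-to-equational translation of \cite{jac08}; it does not come from congruence-distributivity arguments. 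Your simplicity computation for $\flat(G)$ and the observation that $x^{d-1}$ provides inversion are both correct and are indeed ingredients of the term equivalence, but they should feed into the citation route, not into a J\'onsson-style argument.
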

The operator in Theorem \ref{thm:correspondence} is a lattice isomorphism preserving both the finite basis and nonfinite basis properties, so provides a bijective correspondence between limit quasivarieties and limit varieties.  We collect these consequences of \cite{jac08} in the following lemma.
\begin{lem}\label{lem:group}
Let $G$ be a group of finite exponent.
\begin{enumerate}
\item
$\flat(G)$ is finitely based \up(for its equational theory as a semiring\up) if and only if $G$ has a finite basis for its quasi-equational theory \up(as a group\up).
\item $\flat(G)$ generates a limit variety of semirings if and only if $G$ generates a limit quasivariety of groups.
\end{enumerate}
\end{lem}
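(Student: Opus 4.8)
The plan is to reduce both statements to the correspondence of Theorem \ref{thm:correspondence}, applied to the single quasivariety $\mathsf{Q}(G)$. Since $G$ has finite exponent, say dividing $d$, we have $G\in B(d)$, so $\mathsf{Q}(G)$ is one of the subquasivarieties to which the isomorphism $K\mapsto \mathsf{V}(\flat(K))$ applies. The first genuine step is to pin down the image of $\mathsf{Q}(G)$ under this operator: I would show $\mathsf{V}(\flat(\mathsf{Q}(G)))=\mathsf{V}(\flat(G))$. The inclusion $\supseteq$ is immediate, since $\flat(G)\in\flat(\mathsf{Q}(G))$. For the reverse inclusion I would invoke Theorem \ref{thm:correspondence}(2): the subdirectly irreducible members of $\mathsf{V}(\flat(G))$ are, up to isomorphism, exactly $\{\flat(H)\mid H\in\mathsf{Q}(G)\}$; hence every $\flat(H)$ with $H\in\mathsf{Q}(G)$ already lies in $\mathsf{V}(\flat(G))$, which gives $\mathsf{V}(\flat(\mathsf{Q}(G)))\subseteq\mathsf{V}(\flat(G))$. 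Consequently, under the isomorphism of Theorem \ref{thm:correspondence}, the quasivariety $\mathsf{Q}(G)$ corresponds precisely to the variety $\mathsf{V}(\flat(G))$.

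For part (1), I would note that ``$G$ has a finite basis for its quasi-equational theory'' means exactly that $\mathsf{Q}(G)$ is finitely q-based. By Theorem \ref{thm:correspondence}(2) the operator $K\mapsto\mathsf{V}(\flat(K))$ preserves finite and nonfinite axiomatisability, so applying it to $K=\mathsf{Q}(G)$, whose image is $\mathsf{V}(\flat(G))$, yields that $\mathsf{Q}(G)$ is finitely q-based if and only if $\mathsf{V}(\flat(G))$ is finitely based, that is, if and only if $\flat(G)$ is finitely based. This is precisely statement (1).

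For part (2), I would restrict the lattice isomorphism to the principal ideal below $\mathsf{Q}(G)$, obtaining an order isomorphism between the subquasivarieties of $\mathsf{Q}(G)$ and the nontrivial subvarieties of $\mathsf{V}(\flat(G))$ that still preserves finite and nonfinite axiomatisability. Then $\mathsf{Q}(G)$ is a limit quasivariety iff it is nonfinitely q-based while every proper subquasivariety is finitely q-based, which via the isomorphism translates to: $\mathsf{V}(\flat(G))$ is nonfinitely based while every proper \emph{nontrivial} subvariety is finitely based. Since the trivial variety is always finitely based (it satisfies $x\approx y$) and the trivial quasivariety is finitely q-based, the word ``nontrivial'' may be deleted on the variety side and the least elements of the two lattices match up; the resulting condition is exactly that $\mathsf{V}(\flat(G))$ is a limit variety, giving (2).

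The hard part is really only the first step, namely verifying $\mathsf{V}(\flat(\mathsf{Q}(G)))=\mathsf{V}(\flat(G))$ so that the single finite generator $\flat(G)$ genuinely corresponds to $\mathsf{Q}(G)$ under the isomorphism; everything else is bookkeeping with Theorem \ref{thm:correspondence}. The one subtlety to watch is the treatment of the trivial variety and trivial quasivariety at the bottom of the two lattices, since the ``nontrivial subvarieties'' qualifier in Theorem \ref{thm:correspondence} could otherwise produce an off-by-one mismatch between the two notions of limit; checking that both bottom elements are (q-)finitely based closes this gap.
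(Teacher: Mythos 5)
Your proposal is correct and follows essentially the same route as the paper, which states the lemma as a direct consequence of Theorem \ref{thm:correspondence}: the lattice isomorphism $K\mapsto \mathsf{V}(\flat(K))$ preserves finite and nonfinite axiomatisability, hence induces a bijective correspondence between limit quasivarieties and limit varieties. Your extra steps---identifying $\mathsf{V}(\flat(\mathsf{Q}(G)))=\mathsf{V}(\flat(G))$ via the subdirectly irreducible description, and the bookkeeping for the trivial variety at the bottom of the lattice (which the paper relegates to a footnote)---are exactly the details the paper leaves implicit.
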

Ol$'$shanski\u{\i}~\cite{ols} showed that a finite group is finitely q-based if and only its Sylow subgroups are abelian.  Lemma \ref{lem:group}(1) can then be refined in the form of the following lemma.
\begin{lem}\label{lem1}
Let $G$ be a finite group.
Then $\flat(G)$ is finitely based if and only if all Sylow subgroups of $G$ are abelian.
\end{lem}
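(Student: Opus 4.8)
The plan is to combine the two facts that precede the statement, which together pin down \textbf{Lemma \ref{lem1}} almost immediately. By Lemma \ref{lem:group}(1), the flat extension $\flat(G)$ is finitely based as a semiring precisely when $G$ has a finite basis for its quasi-equational theory as a group; that is, precisely when $G$ is finitely q-based in the terminology of the paragraph introducing quasivarieties. The remaining task is therefore to reinterpret the condition ``$G$ is finitely q-based'' for a \emph{finite} group $G$, and this is exactly what Ol$'$shanski\u{\i}'s theorem \cite{ols} provides: a finite group is finitely q-based if and only if all its Sylow subgroups are abelian.

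First I would observe that Lemma \ref{lem:group}(1) is stated for groups of finite exponent, so I must note that a finite group automatically has finite exponent (its exponent divides $|G|$), so the hypothesis of Lemma \ref{lem:group} is satisfied and we may apply part~(1) to our finite $G$. This gives the equivalence
\[
\flat(G)\text{ is finitely based }\iff G\text{ is finitely q-based.}
\]
Then I would simply substitute Ol$'$shanski\u{\i}'s characterisation into the right-hand side, replacing ``$G$ is finitely q-based'' by ``all Sylow subgroups of $G$ are abelian.'' Chaining the two equivalences yields exactly the assertion of the lemma.

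Because the two ingredients are quoted results stated in the excerpt immediately above, there is essentially no obstacle: the proof is a one-line concatenation of Lemma \ref{lem:group}(1) with the Ol$'$shanski\u{\i} dichotomy. The only point requiring a word of care is the transition from ``finite exponent'' (the hypothesis under which Lemma \ref{lem:group} is phrased) to the ``finite group'' hypothesis of Lemma \ref{lem1}, which I would dispatch with the remark above that finiteness implies finite exponent. I would expect the written proof to occupy at most a couple of sentences, and I anticipate no genuine difficulty, since the substantive content has already been established in the cited sources and encapsulated in Lemma \ref{lem:group}.
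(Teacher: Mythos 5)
Your proposal is correct and matches the paper exactly: the lemma is stated there without a separate proof precisely because it is, as you say, the concatenation of Lemma \ref{lem:group}(1) with Ol$'$shanski\u{\i}'s characterisation of finitely q-based finite groups from \cite{ols}. Your added remark that a finite group has finite exponent (so Lemma \ref{lem:group} applies) is a sensible small point of care that the paper leaves implicit.
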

A consequence of Ol$'$shanski\u{\i}'s classification is the following lemma.
\begin{lem}\label{lem:2gen}
Let $G$ be a finite group with a nonabelian Sylow subgroup.  Then there is a finite group $H$ that is a $2$-generated $p$-group that generates a limit subquasivariety of the quasivariety of $G$ and moreover every limit subquasivariety of $\mathsf{Q}(G)$ is of this form.

Equivalently, let $\flat(G)$ be the flat extension of a finite group $G$ with a nonabelian Sylow subgroup. Then there is a finite group $H$ that is a $2$-generated $p$-group with $\flat(H)$ generating a limit subvariety of the variety of $\flat(G)$, and all limit subvarieties of $\mathsf{V}(\flat(G))$ are of this form.
\end{lem}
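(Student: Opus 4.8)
The plan is to establish the quasivariety assertion and transport it to the statement about $\flat(G)$. Setting $d=\exp(G)$, the group $G$ lies in $B(d)$, so $\mathsf{Q}(G)$ is a subquasivariety of $B(d)$, and the lattice isomorphism $K\mapsto\mathsf{V}(\flat(K))$ of Theorem \ref{thm:correspondence}(1) restricts to an isomorphism between the subquasivarieties of $\mathsf{Q}(G)$ and the nontrivial subvarieties of $\mathsf{V}(\flat(G))$, sending $\mathsf{Q}(H)$ to $\mathsf{V}(\flat(H))$. Because this isomorphism preserves finite and nonfinite axiomatisability, it restricts further to a bijection between the limit subquasivarieties of $\mathsf{Q}(G)$ and the limit subvarieties of $\mathsf{V}(\flat(G))$, while Lemma \ref{lem:group}(2) records that $\mathsf{V}(\flat(H))$ is a limit variety exactly when $\mathsf{Q}(H)$ is a limit quasivariety. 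Hence it suffices to prove that the limit subquasivarieties of $\mathsf{Q}(G)$ are precisely the quasivarieties $\mathsf{Q}(H)$ with $H$ a $2$-generated $p$-group.

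The core is an elementary descent driven by one observation: if $L$ is a limit subquasivariety and $\mathsf{Q}'\subseteq L$ is not finitely q-based, then $\mathsf{Q}'$ cannot be a proper subquasivariety of $L$ (every proper subquasivariety of a limit quasivariety is finitely q-based), and therefore $\mathsf{Q}'=L$. I would combine this with Ol$'$shanski\u{\i}'s criterion, which certifies that a finite group fails to be finitely q-based precisely when it has a nonabelian Sylow subgroup. Granting the crux below — that $L$ contains a finite group $H_1$ with a nonabelian Sylow $p$-subgroup $P$ — the observation gives $\mathsf{Q}(H_1)=L$, whence $P\in L$; as a nonabelian $p$-group, $P$ is again not finitely q-based, so $\mathsf{Q}(P)=L$; finally a subgroup $H\le P$ of least order among the nonabelian subgroups of $P$ is minimal nonabelian, hence a $2$-generated $p$-group, and is still not finitely q-based, so $\mathsf{Q}(H)=L$. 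Since $H\in L\subseteq\mathsf{Q}(G)$, this shows every limit subquasivariety has the required form. For existence, the hypothesis that $G$ has a nonabelian Sylow subgroup makes $\mathsf{Q}(G)$ nonfinitely based by Ol$'$shanski\u{\i}'s criterion, so, exactly as in the case of varieties, Zorn's Lemma yields a limit subquasivariety $L\subseteq\mathsf{Q}(G)$; the descent applied to this $L$ then produces the desired $2$-generated $p$-group generating it.

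The step I expect to be the main obstacle is the crux invoked above: every nonfinitely-q-based subquasivariety $L$ of $\mathsf{Q}(G)$ contains a finite group with a nonabelian Sylow subgroup. Since $L$ is locally finite it is generated by its finite members, so the equivalent statement to prove is the contrapositive: if every finite member of $L$ is an $A$-group (a finite group all of whose Sylow subgroups are abelian), then $L$ is finitely q-based. This is precisely where the single displayed criterion of Ol$'$shanski\u{\i} does not suffice and the full classification \cite{ols} is needed: it is not enough to know that each finite $A$-group of exponent dividing $d$ is individually finitely q-based; one must know that the quasivariety they collectively generate inside $B(d)$ is hereditarily finitely q-based, so that the subquasivariety $L$, being contained in it, inherits a finite quasi-equational basis. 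I would extract this hereditary statement from \cite{ols} and feed it into the descent above. The remaining ingredients — that a smallest nonabelian subgroup of a $p$-group is minimal nonabelian, and that minimal nonabelian $p$-groups are $2$-generated — are classical and routine.
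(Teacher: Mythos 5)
Your overall skeleton matches the paper's: reduce to the quasivariety statement via Theorem \ref{thm:correspondence} and Lemma \ref{lem:group}, observe that a nonfinitely q-based subquasivariety of a limit quasivariety must coincide with it, and descend via Ol$'$shanski\u{\i}'s criterion to a minimal nonabelian (hence $2$-generated) $p$-group. The descent itself is sound. The genuine gap is exactly the step you flag as the crux and leave as an IOU: that every nonfinitely q-based subquasivariety of $\mathsf{Q}(G)$ contains a finite member with a nonabelian Sylow subgroup, equivalently that any quasivariety of exponent-$d$ $A$-groups is finitely q-based. This cannot simply be ``extracted'' from the cited form of \cite{ols}: that result is a criterion for a \emph{single finite group} to be finitely q-based, while your contrapositive concerns quasivarieties that need not be generated by one finite group --- indeed, that every limit subquasivariety here \emph{is} finitely generated is part of what is being proved, so assuming it would be circular. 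As you note yourself, finite q-basedness of each generator does not pass to the quasivariety they jointly generate, and since you apply Zorn to $\mathsf{Q}(G)$ and then invoke the crux to start the descent, \emph{both} the existence half and the classification half of your argument rest on this unproven statement.

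The paper avoids the issue entirely by moving into the Sylow subgroup \emph{before} applying Zorn. Fix a nonabelian Sylow $p$-subgroup $S$ of $G$; being its own Sylow subgroup, $S$ is not finitely q-based by \cite{ols}, so a limit subquasivariety $L\subseteq\mathsf{Q}(S)$ exists. Every member of $\mathsf{Q}(S)$ satisfies $x^{p^n}\approx 1$, so every finite member is a $p$-group, hence nilpotent. Since abelian quasivarieties of bounded exponent are varieties (determined by the maximal exponent) and thus finitely based, $L$ is nonabelian, and closure under subgroups plus local finiteness yields a \emph{finite} member $H$ generated by two non-commuting elements; $H$ is then a finite nonabelian $p$-group, so Ol$'$shanski\u{\i}'s criterion applies to $H$ \emph{directly}, and your own observation gives $\mathsf{Q}(H)=L$. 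No hereditary statement about $A$-groups is needed, and your existence half is repaired at a stroke by running Zorn inside $\mathsf{Q}(S)$ rather than $\mathsf{Q}(G)$. To be fair to you: your crux is precisely what one would need to treat an \emph{arbitrary} limit subquasivariety of $\mathsf{Q}(G)$, whereas the paper's argument, which derives nilpotency of $H$ from the ambient law $x^{p^n}\approx 1$, formally covers limit subquasivarieties of prime-power exponent such as those inside $\mathsf{Q}(S)$; so you have put your finger on a real subtlety in the ``moreover'' clause. Nonetheless, as a proof attempt yours is incomplete at its load-bearing step, where the paper's route has a short, self-contained argument.
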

\begin{proof}
We prove the first statement only because equivalence with the second statement is immediate from Theorem \ref{thm:correspondence} and Lemma~\ref{lem:group}.

Let $S$ be a nonabelian Sylow subgroup of $G$.  Then $S$ is a $p$-group for some prime~$p$ and generates a subquasivariety of that of $G$.  As $S$ is a Sylow subgroup of itself, and is finite, it follows from \cite{ols} that $S$ is not finitely q-based.  As $S$ is a $p$-group it is nilpotent and satisfies $x^{p^n}\approx 1$ for some $n$.  Let $L$ be a limit subquasivariety of $\mathsf{Q}(S)$, which evidently is also a locally finite quasivariety of nilpotent groups satisfying $x^{p^n}\approx 1$.  Let $H$ be any member of $L$ that is generated by two non-commuting elements, which exists as $L=\mathsf{SPP}_{\rm u}(L)$ is closed under taking subgroups and contains nonabelian members (abelian quasivarieties are varieties and are determined by the maximal exponent).  As $L$ is locally finite, $H$ is finite.  As $H$ is nilpotent and nonabelian it is nonfinitely based by \cite{ols}; it is a $p$-group as it satisfies $x^{p^n}\approx 1$.  As $L$ is a limit quasivariety containing the nonfinitely q-based quasivariety generated by $H$, it follows that $H$ generates $L$, as required.
\end{proof}
One can also easily deduce that $H$ is nilpotent of class 2, though this is not used in the remainder of the proof.
%

A group is called \emph{minimal nonabelian} if it is
nonabelian, but each of its proper subgroups is abelian.  Lemma \ref{lem:2gen} shows that all limit quasivarieties within the quasivariety generated by a finite group (with nonabelian Sylow subgroup) are generated by minimal nonabelian $p$-groups. Our goal is to find which of these generate quasivarieties that are minimal with respect to not being finitely q-based.

\begin{lem}\label{lem: group1}
Let $G$ be a finite minimal nonabelian group. If $H$ is a finite nonabelian group in $\mathsf{Q}(G)$,
then $|G|\leq |H|$.
\end{lem}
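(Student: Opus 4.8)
The plan is to exploit the fact, noted earlier in this section, that for a finite group $G$ the quasivariety $\mathsf{Q}(G)$ coincides with $\mathsf{SP}(G)$. Thus any $H\in\mathsf{Q}(G)$ is, up to isomorphism, a subgroup of some direct power of $G$; since $H$ is finite, only finitely many coordinates are needed to separate its elements, so I may assume $H\leq G^n$ for some finite $n$. First I would fix such an embedding and consider, for each coordinate $i\in\{1,\dots,n\}$, the restriction to $H$ of the $i$th coordinate projection $\pi_i\colon G^n\to G$. Each image $\pi_i(H)$ is a subgroup of $G$, and clearly $H\leq\prod_{i=1}^n\pi_i(H)\leq G^n$.

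The key observation is that at least one of these projections must be onto $G$. Indeed, suppose to the contrary that $\pi_i(H)\neq G$ for every $i$. Since $G$ is minimal nonabelian, each proper subgroup $\pi_i(H)$ is abelian, so the product $\prod_{i=1}^n\pi_i(H)$ is abelian, and hence so is its subgroup $H$. This contradicts the hypothesis that $H$ is nonabelian. Therefore there is some $i$ with $\pi_i(H)=G$.

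Having produced a surjective homomorphism $\pi_i\colon H\onto G$ of finite groups, the desired inequality is immediate by counting: $|H|=|\ker\pi_i|\cdot|G|\geq|G|$, as required.

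The argument is short and essentially forced, so I do not anticipate a serious obstacle; the content lies entirely in the reductions that set it up. The two points requiring care are that finiteness of $G$ lets one replace $\mathsf{Q}(G)$ by $\mathsf{SP}(G)$ (turning ``nonabelian member of the quasivariety'' into ``nonabelian subgroup of a finite power''), and that a subgroup of a direct product is contained in the product of its coordinate projections (which is what licenses the step ``all projection images abelian $\Rightarrow$ $H$ abelian''). The minimal nonabelian hypothesis on $G$ is used only through the single implication ``proper subgroup $\Rightarrow$ abelian,'' and it is precisely this that converts the nonabelianness of $H$ into surjectivity of some projection.
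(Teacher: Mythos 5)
Your proof is correct and matches the paper's argument in all essentials: both reduce via $\mathsf{Q}(G)=\mathsf{SP}(G)$ to an embedding $H\leq G^n$, both use the coordinate projections $\pi_i$ and the minimal nonabelian hypothesis to force $\pi_i(H)=G$ for some $i$ (you argue the contrapositive, the paper argues directly that some $\pi_i(H)$ is nonabelian, which is the same observation), and both conclude $|G|\leq|H|$ from the resulting surjection $H\onto G$. No gaps; your write-up is if anything slightly more explicit about why the finiteness of $H$ permits a finite power $G^n$.
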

\begin{proof}
Suppose that $H$ is a finite nonabelian group in $\mathsf{Q}(G)$. Then there exists $n\geq 1$ such that
$H$ is a nonabelian subgroup of the direct product $\prod_{1\leq i\leq n} G_i$ of $n$ copies $G_i$ of $G$, $1\leq i\leq n$.
For $1\leq i\leq n$, let $\pi_i: \prod_{1\leq j\leq n} G_j \to G_i$ denote the projection map on the $i$th coordinate of $\prod_{j\in I} G_j$.
Then $\pi_i(H)$ is a subgroup of $G$ for all $1\leq i\leq n$. Since $H$ is nonabelian, it follows that
there exists $1\leq i\leq n$ such that $\pi_i(H)$ is a nonabelian subgroup of $G$. Furthermore,
we have that $\pi_i(H)=G$, since $G$ is minimal nonabelian. Thus $|G|\leq |H|$ as required.
\end{proof}

By Theorem \ref{thm:correspondence}(1) and Lemma \ref{lem: group1} we have the following corollary.
\begin{cor}\label{cor:quasi3}
Let $G_1$ and $G_2$ be finite minimal nonabelian groups. Then $\mathsf{Q}(G_1)=\mathsf{Q}(G_2)$ \up(equivalently, $\mathsf{V}(\flat(G_1))=\mathsf{V}(\flat(G_2))$\up)
if and only if $G_1$ and $G_2$ are isomorphic.
\end{cor}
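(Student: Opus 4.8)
The plan is to prove the quasivariety form of the statement, namely that $\mathsf{Q}(G_1)=\mathsf{Q}(G_2)$ if and only if $G_1\cong G_2$, and then to transfer this to the equivalent statement about $\mathsf{V}(\flat(G_1))$ and $\mathsf{V}(\flat(G_2))$ by way of Theorem~\ref{thm:correspondence}. The backward implication is immediate, since isomorphic algebras generate the same quasivariety, so the work lies entirely in the forward direction: assuming $\mathsf{Q}(G_1)=\mathsf{Q}(G_2)$, I must manufacture an isomorphism between $G_1$ and $G_2$ rather than merely a numerical coincidence.

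First I would pin down the orders. As minimal nonabelian groups, $G_1$ and $G_2$ are in particular finite and nonabelian, and from $\mathsf{Q}(G_1)=\mathsf{Q}(G_2)$ we have $G_1\in\mathsf{Q}(G_2)$ and $G_2\in\mathsf{Q}(G_1)$. Applying Lemma~\ref{lem: group1} with $(G,H)=(G_2,G_1)$ gives $|G_2|\le|G_1|$, and applying it with $(G,H)=(G_1,G_2)$ gives $|G_1|\le|G_2|$; hence $|G_1|=|G_2|$.

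Next I would upgrade this order bound to an isomorphism by revisiting the mechanism inside the proof of Lemma~\ref{lem: group1}. Since $G_1,G_2$ are finite we have $\mathsf{Q}(G_2)=\mathsf{SP}(G_2)$, so $G_1$ may be viewed as a subgroup of a finite power $\prod_{1\le i\le n}(G_2)_i$. Because $G_1$ is nonabelian, the subgroups $\pi_i(G_1)\le G_2$ cannot all be abelian (otherwise $G_1$ would embed into their product and hence be abelian), so some coordinate projection $\pi_i$ maps $G_1$ onto a nonabelian subgroup of $G_2$; minimality of $G_2$ then forces $\pi_i(G_1)=G_2$. Thus $\pi_i\rest{G_1}\colon G_1\onto G_2$ is a surjective homomorphism of finite groups, and since $|G_1|=|G_2|$ it is a bijection, hence an isomorphism. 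This yields $G_1\cong G_2$ and completes the forward direction.

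Finally, to obtain the parenthetical equivalence I would take $d$ to be (say) the least common multiple of the exponents of $G_1$ and $G_2$, so that $\mathsf{Q}(G_1)$ and $\mathsf{Q}(G_2)$ are both subquasivarieties of $B(d)$. By Theorem~\ref{thm:correspondence}(2) one has $\mathsf{V}(\flat(G_i))=\mathsf{V}(\flat(\mathsf{Q}(G_i)))$ for $i=1,2$, and since $K\mapsto\mathsf{V}(\flat(K))$ is a lattice isomorphism by Theorem~\ref{thm:correspondence}(1), it is in particular injective; therefore $\mathsf{Q}(G_1)=\mathsf{Q}(G_2)$ holds precisely when $\mathsf{V}(\flat(G_1))=\mathsf{V}(\flat(G_2))$. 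I expect the only genuinely delicate point to be the step converting equal orders into an actual isomorphism: equality of orders alone is nowhere near enough for general groups, and what rescues the argument is that the projection extracted from Lemma~\ref{lem: group1} is already surjective onto the minimal nonabelian group $G_2$, so that order equality promotes it directly to an isomorphism.
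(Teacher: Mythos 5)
Your proof is correct and follows essentially the same route as the paper, which simply cites Theorem~\ref{thm:correspondence} and Lemma~\ref{lem: group1} without spelling out details: applying Lemma~\ref{lem: group1} in both directions for $|G_1|=|G_2|$, then reusing the projection mechanism inside that lemma's proof to obtain a surjection $G_1\onto G_2$ that equal orders promote to an isomorphism. Your explicit handling of the delicate step---that equal orders alone would not suffice, but the surjectivity of the extracted projection onto the minimal nonabelian group does---is exactly the argument the paper's citation-style proof leaves implicit.
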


\begin{pro}\label{pro: limit1}
Let $G$ be a finite nonabelian $p$-group. Then $\mathsf{V}(\flat(G))$ is a limit variety
if and only if $G$ is the unique minimal nonabelian group in $\mathsf{Q}(G)$ up to isomorphism.
\end{pro}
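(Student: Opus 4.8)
The plan is to transfer the question to groups and then assemble the quasivariety facts already established. By Lemma~\ref{lem:group}(2), $\mathsf{V}(\flat(G))$ is a limit variety if and only if $\mathsf{Q}(G)$ is a limit quasivariety, so it suffices to prove the corresponding statement for $\mathsf{Q}(G)$. A finite nonabelian $p$-group is its own nonabelian Sylow subgroup, so by Ol$'$shanski\u{\i}'s theorem it is not finitely q-based; hence $\mathsf{Q}(G)$ is nonfinitely q-based. The same applies to every minimal nonabelian group $M\in\mathsf{Q}(G)$: such an $M$ is a subgroup of a finite power of $G$, hence again a nonabelian $p$-group, so $\mathsf{Q}(M)$ is nonfinitely q-based as well.

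For necessity, suppose $\mathsf{Q}(G)$ is a limit quasivariety. At least one minimal nonabelian group lies in $\mathsf{Q}(G)$, for instance a minimal nonabelian subgroup of $G$ (a nonabelian subgroup of least order). If $M$ is any minimal nonabelian group in $\mathsf{Q}(G)$, then $\mathsf{Q}(M)$ is a nonfinitely-q-based subquasivariety of the limit quasivariety $\mathsf{Q}(G)$, so minimality forces $\mathsf{Q}(M)=\mathsf{Q}(G)$. Consequently any two minimal nonabelian groups $M_1,M_2\in\mathsf{Q}(G)$ satisfy $\mathsf{Q}(M_1)=\mathsf{Q}(G)=\mathsf{Q}(M_2)$, and Corollary~\ref{cor:quasi3} gives $M_1\cong M_2$. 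Thus, up to isomorphism, $\mathsf{Q}(G)$ has a unique minimal nonabelian group, and taking $M$ to be a minimal nonabelian subgroup of $G$ shows that this group generates $\mathsf{Q}(G)$.

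For sufficiency I would argue by contradiction from the description of limit subquasivarieties. Assume $G$ is, up to isomorphism, the unique minimal nonabelian group in $\mathsf{Q}(G)$, but that $\mathsf{Q}(G)$ is not limit. Then $\mathsf{Q}(G)$ has a proper subquasivariety that is itself nonfinitely q-based, and since every nonfinitely-q-based quasivariety contains a limit subquasivariety (Zorn's Lemma, transported through the lattice isomorphism of Theorem~\ref{thm:correspondence} onto the subvariety lattice of $\mathsf{V}(\flat(G))$), there is a proper limit subquasivariety $L\subsetneq\mathsf{Q}(G)$. By the remark following Lemma~\ref{lem:2gen}, $L=\mathsf{Q}(H)$ for some minimal nonabelian $p$-group $H\in\mathsf{Q}(G)$. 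Uniqueness makes $H$ isomorphic to the distinguished minimal nonabelian group, so $\mathsf{Q}(H)=\mathsf{Q}(G)$, contradicting $L\subsetneq\mathsf{Q}(G)$; hence $\mathsf{Q}(G)$ is limit.

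The step I expect to require the most care is sufficiency, and within it the point that the distinguished minimal nonabelian group must generate all of $\mathsf{Q}(G)$, not merely embed in it, since the contradiction collapses precisely when $\mathsf{Q}(H)=\mathsf{Q}(G)$. Corollary~\ref{cor:quasi3}, and behind it the order estimate of Lemma~\ref{lem: group1}, is what aligns these: it forces minimal nonabelian members of $\mathsf{Q}(G)$ generating the same quasivariety to be isomorphic, so that uniqueness of the isomorphism type and generation of $\mathsf{Q}(G)$ go together. The remaining passages, through Lemma~\ref{lem:group} and Theorem~\ref{thm:correspondence}, are routine.
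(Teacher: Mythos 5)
Your argument follows the paper's proof in all essentials: necessity uses Ol$'$shanski\u{\i}'s theorem together with limit-minimality to force $\mathsf{Q}(H)=\mathsf{Q}(G)$ for every minimal nonabelian $H\in\mathsf{Q}(G)$ and then invokes Corollary~\ref{cor:quasi3}; sufficiency uses the fact (the remark after Lemma~\ref{lem:2gen}) that every limit subquasivariety of $\mathsf{Q}(G)$ is generated by a minimal nonabelian group. Your only additions are bookkeeping that the paper leaves implicit: the Zorn-type existence of a limit subquasivariety inside any nonfinitely q-based quasivariety, and the observation that a nonabelian subgroup of $G$ of least order supplies a minimal nonabelian member of $\mathsf{Q}(G)$.

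One divergence is worth recording. In the necessity direction the paper concludes $G\cong H$ by applying Corollary~\ref{cor:quasi3} to the pair $(G,H)$, which is licit only if $G$ itself is minimal nonabelian --- a hypothesis absent from the proposition's statement but present in its sole application (Theorem~\ref{thm1}). You instead prove only that all minimal nonabelian members of $\mathsf{Q}(G)$ are pairwise isomorphic and that this common group generates $\mathsf{Q}(G)$, and you stop there: you never show that $G$ is isomorphic to it. Measured against the literal statement this is a gap, but an unfillable one: taking $G=H\times H$ with $H=M_3(1,1,1)$ gives $\mathsf{Q}(G)=\mathsf{Q}(H)$ limit while $G$ is not minimal nonabelian, so the literal ``only if'' direction fails for general nonabelian $p$-groups. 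Thus your cautious phrasing is the correct conclusion, the defect lies in the statement itself (and is glossed over in the paper's own proof), and under the standing assumption of Theorem~\ref{thm1} that $G$ is minimal nonabelian, one further application of Corollary~\ref{cor:quasi3}, to $G$ and your distinguished minimal nonabelian subgroup, closes the argument exactly as the paper intends.
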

\begin{proof}
Suppose that $G$ generates a limit quasivariety. Let $H$ be an arbitrary finite minimal nonabelian group in $\mathsf{Q}(G)$.
As $H$ is not finitely q-based and $\mathsf{Q}(G)$ is a limit quasivariety it follows that $\mathsf{Q}(G)=\mathsf{Q}(H)$ and then Corollary \ref{cor:quasi3} implies that $G\cong H$.

Conversely, assume that $G$ is the unique minimal nonabelian group in $\mathsf{Q}(G)$.  As every limit subquasivariety of $\mathsf{Q}(G)$ (by Lemma \ref{lem:2gen}) is generated by a minimal nonabelian group, it follows that $\mathsf{Q}(G)$ is a limit quasivariety.
\end{proof}

Recall that a group $G$ is said to be \emph{metacyclic} if it contains a cyclic normal subgroup $N$ such
that $G/N$ is cyclic. It is easy to see that each divisor of a metacyclic group is also metacyclic.
The following result, due to R\'{e}dei \cite{re}, provides the classification of all finite minimal nonabelian $p$-groups.
\begin{lem}\label{lem: red}
Let $G$ be a finite minimal nonabelian $p$-group. Then $G$ is isomorphic to one of the following groups:
\begin{itemize}
\item [$(i)$] $Q_8:=\langle a, b \mid a^4=1, a^2=b^2, aba=b\rangle$, where $|Q_8|=8$;
\item [$(ii)$] $M_p(m, n):=\langle a, b \mid a^{p^m}=b^{p^n}=1, ab=ba^{1+p^{m-1}}\rangle$, $m\geq 2$, $n\geq 1$.  The group
              $M_p(m, n)$ is a metacyclic group of order $p^{m+n}$;
\item [$(iii)$] $M_p(m, n, 1):=\langle a, b \mid a^{p^m}=b^{p^n}=c^p=1, ab=bac, ac=ca, bc=cb\rangle$, $m\geq 1$, $n\geq 1$.  The group  $M_p(m, n, 1)$ is a nonmetacyclic group of order $p^{m+n+1}$ except $M_2(1, 1, 1)$.
\end{itemize}

In the above group presentations, the groups given by different parameters are not isomorphic except
$M_2(2, 1)$ and $M_2(1, 1, 1)$, which are both isomorphic to the Dihedral group of order $8$.
\end{lem}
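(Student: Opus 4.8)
The plan is to prove this by first extracting the standard structural consequences of minimal nonabelianity for a finite $p$-group $G$, and then performing a case analysis according to how the derived subgroup sits relative to a chosen generating pair. Throughout I write $[x,y]=x^{-1}y^{-1}xy$, and $\Phi(G)$, $Z(G)$, $G'$ for the Frattini subgroup, centre and derived subgroup. The three facts I would establish first are: (a) $G$ is generated by two elements; (b) $\Phi(G)=Z(G)$ and $G/Z(G)\cong\mathbb{Z}_p\times\mathbb{Z}_p$; and (c) $G'=\langle[a,b]\rangle$ has order $p$ and lies in $Z(G)$, so that $G$ has nilpotency class~$2$.

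For (a): if $a,b$ are noncommuting elements of $G$, then $\langle a,b\rangle$ is a nonabelian subgroup, hence equals $G$ by minimality. For (b): since $G$ needs exactly two generators, the elementary abelian quotient $G/\Phi(G)$ is isomorphic to $\mathbb{Z}_p\times\mathbb{Z}_p$; every element of $G$ then lies in some maximal subgroup, and each maximal subgroup is proper, hence abelian, so any $x\in\Phi(G)$ (being contained in all maximal subgroups) commutes with every $g\in G$, giving $\Phi(G)\le Z(G)$. As $G/Z(G)$ cannot be cyclic (else $G$ would be abelian), the only possibility compatible with $\Phi(G)\le Z(G)\lneq G$ is $Z(G)=\Phi(G)$ with $G/Z(G)\cong\mathbb{Z}_p\times\mathbb{Z}_p$. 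Fact (c) follows because the commutator descends to an alternating bilinear pairing on the two-dimensional space $G/Z(G)$ into $G'\le Z(G)$, so $G'=\langle[a,b]\rangle$ is cyclic; and $[a,b]^p=[a^p,b]=1$ since $a^p\in\Phi(G)=Z(G)$, whence $|G'|=p$.

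With class~$2$ in hand the commutator calculus becomes linear: writing $c=[a,b]$ and $\operatorname{ord}(a)=p^m$, $\operatorname{ord}(b)=p^n$, one has $a^p,b^p\in Z(G)$ and $c^p=1$, and identities such as $(ab)^k=a^kb^kc^{\binom{k}{2}}$ hold. I would then organise the analysis by whether $G$ is metacyclic, equivalently by the position of $c$ relative to $\langle a\rangle$ and $\langle b\rangle$. When $G'$ meets a cyclic normal subgroup, a suitable choice of generators yields $[a,b]=a^{p^{m-1}}\in\langle a\rangle$ and recovers the metacyclic family $M_p(m,n)$; when $c$ cannot be absorbed into either $\langle a\rangle$ or $\langle b\rangle$ it must be retained as an independent central generator, giving the nonmetacyclic family $M_p(m,n,1)$. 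The prime $p=2$ requires separate attention: there the relation $b^2$ may be forced to equal $a^{p^{m-1}}$, producing the quaternion group $Q_8$, and one must record the coincidences $M_2(2,1)\cong M_2(1,1,1)\cong D_8$.

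The main obstacle is the presentation-side bookkeeping rather than the structural analysis. For each parameter choice I must verify that the listed presentation really defines a group of the stated order, i.e.\ that the relations do not collapse it, which I would do by exhibiting a concrete matrix or permutation realisation of the right order (von Dyck's theorem then forcing equality); and conversely that every $G$ satisfying (a)--(c) is isomorphic to one of the listed groups, which requires choosing the generators so that $p^m$, $p^n$, and the relation satisfied by $c$ are uniquely pinned down. Nonredundancy for distinct parameters is then settled by comparing invariants such as $|G|$, the orders of $a$ and $b$, the exponent, and whether $G$ is metacyclic, with the single genuine coincidence at $p=2$ already noted. The $p=2$ case is where the argument is least uniform and demands the most care.
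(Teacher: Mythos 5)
The paper offers no proof of this lemma at all: it is quoted as R\'edei's classification, with a bare citation to \cite{re} (the presentational conventions match Berkovich \cite{ber08}, the paper's other group-theory reference). So there is no internal argument to compare against, and your sketch should be measured against the standard literature proof --- which it essentially reconstructs. Your preliminary reductions (a)--(c) are argued correctly and completely: minimality gives two-generation; $\Phi(G)\le Z(G)$ because a Frattini element lies in every (abelian) maximal subgroup while every element of the noncyclic $p$-group $G$ lies in some maximal subgroup; $Z(G)\le\Phi(G)$ because a central element outside $\Phi(G)$ would complete a two-element generating set and force $G$ abelian; and $G'\le\Phi(G)=Z(G)$ then gives class $2$, whence $G'=\langle[a,b]\rangle$ and $[a,b]^p=[a^p,b]=1$, so $|G'|=p$. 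This is exactly the platform on which R\'edei's proof runs.

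The genuine shortfall is that the classification itself --- the actual content of the lemma --- is left as a plan rather than executed. The step ``absorb $c$ into $\langle a\rangle$ in the metacyclic case, otherwise keep it as an independent central generator'' is precisely where the work lies: one must show that generators can be renormalized so that $\langle a\rangle\cap\langle b\rangle=1$ with $[a,b]=a^{p^{m-1}}$, identify $Q_8$ as the unique obstruction at $p=2$, rule out other degenerations (e.g.\ why $m\ge 2$ can always be arranged in family (ii), and why $M_2(1,1,1)$ is the sole metacyclic escapee from family (iii)), and verify that each presentation defines a group of exactly the stated order; your proposal names these obligations but discharges none of them, so as written it is a correct outline, not a proof. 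Two smaller slips: with your convention $[x,y]=x^{-1}y^{-1}xy$ and $c=[a,b]$, the correct class-$2$ identity is $(ab)^k=a^kb^k[b,a]^{\binom{k}{2}}=a^kb^kc^{-\binom{k}{2}}$ (compare Lemma~\ref{ber1}); and in family (iii) one has $M_p(m,n,1)\cong M_p(n,m,1)$ (swap $a$ and $b$ and replace $c$ by $c^{-1}$), so the nonisomorphism claim for distinct parameters requires the convention $m\ge n$, and ``the orders of $a$ and $b$'' are not isomorphism invariants until generators are canonically normalized --- a caveat which, to be fair, the paper's own statement of the lemma silently shares.
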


\begin{pro}\label{pro: limit2}
Let $G$ be one of the following groups:
\begin{itemize}
\item [$(i)$] $Q_8$;
\item [$(ii)$] $M_p(m, n)$, $m\geq 2$, $n\geq 1$;
\item [$(iii)$] $M_p(m, n, 1)$, $m\geq 1$, $n\geq 1$ and $m\neq n$.
\end{itemize}
Then there exists a finite minimal nonabelian group $H$ such that $\mathsf{Q}(H)$ is a proper subquasivariety of $\mathsf{Q}(G)$.
\end{pro}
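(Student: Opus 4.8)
The plan is to reduce everything to the construction of one explicit embedding per case. I first observe that it suffices, for each listed $G$, to produce a finite minimal nonabelian group $H$ with $|H| > |G|$ and $H \in \mathsf{Q}(G)$. Indeed, $H \in \mathsf{Q}(G)$ gives $\mathsf{Q}(H) \subseteq \mathsf{Q}(G)$ at once, while properness is automatic: since $H$ is minimal nonabelian, Lemma~\ref{lem: group1} forces every nonabelian member of $\mathsf{Q}(H)$ to have order at least $|H| > |G|$, so the nonabelian group $G$ cannot lie in $\mathsf{Q}(H)$. As $G$ is finite we have $\mathsf{Q}(G) = \mathsf{SP}(G)$, so it is enough to embed $H$ into a finite power of $G$; in each case I will embed $H$ into $G \times G$.

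For the three families I would choose $H$ as follows (each is minimal nonabelian by Lemma~\ref{lem: red}). (i) For $G = Q_8 = \langle i,j\rangle$, take $H = M_2(2,2)$ and map $A \mapsto (i,1)$, $B \mapsto (j,i)$; since $[i,j] = i^2$ (both equal $-1$) one gets $[A,B] = (i^2,1) = A^2$ together with $B^2 = (-1,-1) \ne A^2$, exactly the presentation of $M_2(2,2)$ and not the extra collapse $A^2 = B^2$ that holds in $Q_8$. (ii) For metacyclic $G = M_p(m,n)$, take its non-metacyclic companion $H = M_p(m,n,1)$ of order $p^{m+n+1}$; writing the two factors as $\langle a_i,b_i\rangle$, send $A \mapsto (a_1,a_2)$ and $B \mapsto (b_1, a_2^{p^{m-n}})$ if $m \ge n$, respectively $B \mapsto (b_1, b_2^{p})$ if $m < n$. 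In both cases the second coordinate of $B$ centralises $a_2$, so $[A,B] = (a_1^{p^{m-1}},1)$ is supported on the first factor only; as $a_2^{p^{m-1}} \ne 1$, this commutator is independent of $A^{p^{m-1}} = (a_1^{p^{m-1}}, a_2^{p^{m-1}})$, which is precisely what promotes the metacyclic commutator to the free central generator of $M_p(m,n,1)$. (iii) For $G = M_p(m,n,1)$ with $m \ne n$, the relabelling $a \leftrightarrow b$ gives $M_p(m,n,1) \cong M_p(n,m,1)$, so I may assume $m > n$; then take $H = M_p(m,n+1,1)$ and map $A \mapsto (a_1,b_2)$, $B \mapsto (b_1, a_2^{p^{m-n-1}})$, so that $B$ picks up order $p^{n+1}$ from the second factor while $[A,B] = (c_1, c_2^{-p^{m-n-1}})$ stays central of order $p$.

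In each case the defining relations of $H$ are verified by a direct computation in $G \times G$, so the substance is injectivity, i.e.\ showing the image has the full order $|H|$. Here I would invoke the standard fact that a nontrivial normal subgroup of a finite $p$-group meets the centre nontrivially: the kernel is normal, so it is enough to check that the homomorphism is injective on $Z(H)$. Since the centres of these groups are transparent — $Z(M_p(m,n,1)) = \langle a^{p}\rangle \times \langle b^{p}\rangle \times \langle c\rangle$, and analogously for $G \times G$ — restriction to $Z(H)$ turns injectivity into a small computation over $\mathbb{Z}/p$ verifying that the images of the central generators $A^{p}, B^{p}, C$ remain independent.

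The step I expect to be most delicate is the prime $p = 2$. For odd $p$ one has $(xy)^{p} \equiv x^{p}y^{p} \pmod{G'}$ because $\binom{p}{2} \equiv 0 \pmod p$, which keeps the power maps additive modulo $G'$ and makes the independence computations of the previous paragraph routine. For $p = 2$ the identity $(xy)^2 = x^2 y^2 [y,x]$ retains the commutator term, and the tempting device of inverting a coordinate degenerates because the relevant $2 \times 2$ determinant equals $2 \equiv 0$. This is exactly why the maps above are built from a coordinate that centralises $a_2$ rather than from an inversion; the explicit embeddings for $Q_8$ and $M_2(2,2)$ illustrate the pattern, and I expect the remaining $p = 2$ instances of (ii) and (iii) to succeed by the same commuting-coordinate device, with the injectivity bookkeeping being the only real labour.
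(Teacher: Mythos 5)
Your proposal is correct, and its skeleton coincides with the paper's: both arguments use Lemma \ref{lem: group1} to reduce the claim to exhibiting a finite minimal nonabelian $H\le G\times G$ with $|H|>|G|$ (properness then follows exactly as you argue), and both then produce $H$ by writing down explicit generators in $G\times G$, obtaining an epimorphism from a R\'edei presentation (Lemma \ref{lem: red}) via von Dyck's theorem, and checking injectivity. Where you differ is only in the choice of witnesses: the paper splits Case (ii) into $n\ge m$ (target $M_p(n,n,1)$, generators $(a,b)$, $(b,1)$) and $m>n$ (target $M_p(m,m)$, generators $(a,b)$, $(1,a)$), and in Case (iii) targets $M_p(m,m,1)$ via $(a,b)$, $(1,a)$; you instead use the uniform targets $M_p(m,n,1)$ in (ii) and $M_p(m,n+1,1)$ in (iii), which enlarge the order by a single factor of $p$ rather than to $p^{2n+1}$, $p^{2m}$, $p^{2m+1}$, and which avoid the paper's subcase split. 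This is equally valid, since only $|H|>|G|$ is needed, and your targets avoid the degenerate $M_2(1,1,1)$ because $m\ge 2$ holds in (ii) by hypothesis and in (iii) after the reduction to $m>n\ge 1$. Your injectivity device (the kernel is normal, a nontrivial normal subgroup of a $p$-group meets the centre, so it suffices to check injectivity on $Z(H)=\langle a^p\rangle\langle b^p\rangle\langle c\rangle$) is a clean systematisation of what the paper leaves as ``it is not difficult to show that $\varphi$ is injective'', and the resulting computations do check out: for instance in (ii) with $m\ge n$, triviality of $\bigl(a_1^{p\alpha+p^{m-1}\gamma}b_1^{p\beta},\,a_2^{p\alpha+p^{m-n+1}\beta}\bigr)$ forces $\beta\equiv 0 \pmod{p^{n-1}}$, then $\alpha\equiv 0\pmod{p^{m-1}}$, then $\gamma\equiv 0\pmod{p}$, using the unique normal form $b^ja^i$ in $M_p(m,n)$. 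Finally, your hedged closing paragraph about $p=2$ is unnecessary for your own maps: because each generating pair is arranged so that the two coordinates commute except in one slot, every power and commutator computation is coordinatewise, and no expansion of $(xy)^p$ (hence no $\binom{p}{2}$ term) ever arises; the only prime-sensitive facts used are that $a^p$ is central in $M_p(m,n)$ and, in your $m<n$ subcase, that $b^p$ is central, which amounts to $(1+p^{m-1})^p\equiv 1\pmod{p^m}$ and holds for $p=2$ precisely because $m\ge 2$ there. So the expected ``delicate'' $p=2$ labour reduces to that one congruence, and your proof is complete as it stands.
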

\begin{proof}
By Lemma \ref{lem: group1}
it is enough to show that there exists a finite minimal nonabelian subgroup $H$ of $G \times G$ such that $|H|>|G|$. Consider the following three cases:

\textbf{Case 1.} $G=Q_8$. Let $H$ denote the subgroup $H$ of $G\times G$ generated by $(a, 1)$ and $(ab, b)$.
Then it is easy to verify that $(a, 1)^4=(ab, b)^4=(1, 1)$ and that
$(a, 1)(ab, b)=(ab, b)(a, 1)^3=(b^3, b)$. By Lemma \ref{lem: red} $(ii)$ there exists an epimorphism
$\varphi:M_2(2, 2) \to H$. It is not difficult to show that $\varphi$ is injective and so it is an
isomorphism. Thus $H$ is isomorphic to $M_2(2, 2)$ and so it is
a minimal nonabelian $2$-group of order $16$. We conclude that $|H|>|G|$.

\textbf{Case 2.} $G=M_p(m, n)$, where $m\geq 2$, $n\geq 1$. Consider the following two subcases:

{\bf Subcase 2.1.} $n\geq m$. Let $H$ denote the subgroup $H$ of $G\times G$ generated by $(a, b)$ and $(b, 1)$.
Then $(a, b)^{p^n}=(b, 1)^{p^n}=(1, 1)$. Since $a^p$ is central in $G$, it follows that
$[(a, b), (b, 1)]=(a^{p^{m-1}}, 1)$ is central in $G\times G$ and that $[(a, b), (b, 1)]^p=(1, 1)$.
By Lemma \ref{lem: red} $(iii)$ there exists an epimorphism $\varphi: M_p(n, n, 1) \to H$.
Notice that $n\geq m$. We can show that $\varphi$ is an injective and so it is an isomorphism.
Thus $H$ is isomorphic to $M_p(n, n, 1)$ and so it is a minimal nonabelian $p$-group of order $p^{2n+1}$.
We therefore have
\[|H|=p^{2n+1}>p^{m+n}=|G|.\]

{\bf Subcase 2.2.} $m> n$. Consider the subgroup $H$ of $G\times G$ generated by $(a, b)$ and $(1, a)$.
Then $(a, b)^{p^m}=(1, a)^{p^m}=(1, 1)$ and $(1, a)(a, b)=(a, b)(1, a)^{p^{m-1}+1}$.
By Lemma \ref{lem: red} $(ii)$ there exists an epimorphism $\varphi: M_p(m, m) \to H$.
Furthermore, $\varphi$ is injective and so it is an isomorphism.
Thus $H$ is isomorphic to $M_p(m, m)$ and so it is a minimal nonabelian $p$-group of order $p^{2m}$.
Hence
\[|H|=p^{2m}>p^{m+n}=|G|.\]

\textbf{Case 3.} $G=M_p(m, n, 1)$, where $m\geq 1$, $n\geq 1$ and $m\neq n$. Assume that $m>n$.
Consider the subgroup $H$ of $G\times G$ generated by $(a, b)$ and $(1, a)$.
Then $(a, b)^{p^m}=(1, a)^{p^m}=(1, 1)$, $[(a, b), (1, a)]=(1, c^{-1})$ is central in $G\times G$ and $(1, c^{-1})^p=(1, 1)$.
By Lemma \ref{lem: red} $(iii)$ there exists an epimorphism $\varphi: M_p(m, m, 1)\to H$.
Furthermore, $\varphi$ is injective and so it is an isomorphism.
This shows that $H$ is isomorphic to $M_p(m, m, 1)$
and so it is a minimal nonabelian $p$-group of order $p^{2m+1}$.
Thus
\[|H|=p^{2m+1}>p^{m+n+1}=|G|.\]
This completes the proof.
\end{proof}

The following lemma can be found in \cite[Exercise 18]{ber08}.
\begin{lem}\label{ber1}
Let $G$ be a nilpotent group of class $2$, $x, y, z\in G$, $n\geq 1$. Then
\[
(xy)^n=x^ny^n[y, x]^{\binom{n}{2}}, \quad [x, y]^n=[x^n, y]=[x, y^n].
\]
\end{lem}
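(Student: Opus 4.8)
The plan is to prove both identities by induction on $n$, exploiting the defining feature of a class-$2$ nilpotent group: every commutator $[a,b]$ lies in the centre, so commutators may be moved freely past any element. This single fact is what turns an ostensibly complicated ``collecting'' process into a bookkeeping exercise with central factors.

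For the first identity I would begin by recording the elementary swap rule. Writing $[y,x]=y^{-1}x^{-1}yx$, one has immediately $yx=xy[y,x]$. Iterating this, and using that $[y,x]$ is central, a short sub-induction yields $y^n x = x y^n [y,x]^{n}$ for all $n\ge 1$ (each time a single $x$ is pushed left past one $y$ it releases one central factor $[y,x]$, so $n$ copies of $y$ release $n$ of them). With this in hand the main identity follows by induction on $n$. The base case $n=1$ is trivial since $\binom12=0$. For the inductive step I would write
\[
(xy)^{n+1}=(xy)^n\,xy = x^n y^n [y,x]^{\binom n2}\,xy = x^n\,(y^n x)\,y\,[y,x]^{\binom n2},
\]
pulling the central factor to the right, and then substitute $y^n x = x y^n [y,x]^n$ to obtain $x^{n+1}y^{n+1}[y,x]^{\,n+\binom n2}$. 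Pascal's rule $\binom{n}{2}+n=\binom{n+1}{2}$ then closes the induction.

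For the second identity I would invoke the standard commutator expansions $[ab,c]=[a,c]^{b}[b,c]$ and $[a,bc]=[a,c]\,[a,b]^{c}$, both of which collapse in class $2$ (where conjugation acts trivially on central commutators) to $[ab,c]=[a,c][b,c]$ and $[a,bc]=[a,b][a,c]$. A one-line induction on $n$ then gives $[x^n,y]=[x,y]^n$ and $[x,y^n]=[x,y]^n$ simultaneously, which are the two displayed equalities.

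I do not expect any genuine obstacle: the statement is a classical collection formula and the computation is routine. The only points requiring care are the commutator convention (so that it is $[y,x]$, and not $[x,y]$, that appears with exponent $\binom n2$) and the consistent invocation of centrality to justify each rearrangement of a commutator past a group element. Since the result is standard, one could alternatively simply cite the collection formula, as the reference does; the inductive argument above is included only for self-containedness.
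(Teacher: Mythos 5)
Your proof is correct, but there is little to compare it against: the paper gives no argument for this lemma at all, quoting it verbatim from \cite[Exercise 18]{ber08}. Your write-up supplies the standard self-contained derivation, and every step checks out. With the convention $[y,x]=y^{-1}x^{-1}yx$ one indeed has $yx=xy[y,x]$; centrality of commutators in a class-$2$ group gives $y^nx=xy^n[y,x]^n$ by your one-line sub-induction, and the main induction together with Pascal's rule $\binom{n}{2}+n=\binom{n+1}{2}$ yields $(xy)^{n+1}=x^{n+1}y^{n+1}[y,x]^{\binom{n+1}{2}}$. Your attention to the commutator convention is warranted and correctly resolved: it is $[y,x]$, not $[x,y]$, that appears, as the case $n=2$ confirms via $xyxy=x(xy[y,x])y=x^2y^2[y,x]$. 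For the second display, the expansions $[ab,c]=[a,c]^b[b,c]$ and $[a,bc]=[a,c][a,b]^c$ do collapse to bilinearity of the commutator once all commutators are central, and the simultaneous induction giving $[x^n,y]=[x,y]^n=[x,y^n]$ is immediate. In short, your argument is a correct, self-contained alternative to the paper's bare citation: the citation buys brevity, while your proof makes explicit exactly where class-$2$ nilpotency (centrality of all commutators) is used, which is arguably more informative given that the lemma is then applied in the paper's computation of $\exp(M_p(m,n,1))$.
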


\begin{pro}\label{pro: limit3}
Let $G$ denote the group $M_p(m, n, 1)$, where $m=n\geq 2$ if $p=2$ and $m=n\geq 1$ if $p>2$.
If $H$ is a finite minimal nonabelian group in $\mathsf{Q}(G)$, then $\mathsf{Q}(H)=\mathsf{Q}(G)$.
\end{pro}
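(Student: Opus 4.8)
The plan is to prove the stronger statement that any such $H$ is in fact isomorphic to $G$; since both $G$ and $H$ are minimal nonabelian, Corollary~\ref{cor:quasi3} then yields $\mathsf{Q}(H)=\mathsf{Q}(G)$ at once. The argument rests on combining two numerical constraints on $H$: an upper bound on its exponent, which forces $H$ to be a $p$-group of bounded order, together with the lower bound $|G|\le|H|$ supplied by Lemma~\ref{lem: group1} (using that $G$ is minimal nonabelian and $H$ is a finite nonabelian member of $\mathsf{Q}(G)$).

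First I would pin down the exponent of $G=M_p(n,n,1)$. Since $G$ is nilpotent of class $2$ with $G'=\langle c\rangle$ of order $p$, a general element has the form $a^ib^jc^k$ with $c$ central of order $p$, so its $p^n$-th power equals $(a^ib^j)^{p^n}$, and applying Lemma~\ref{ber1} gives $(a^ib^j)^{p^n}=a^{ip^n}b^{jp^n}[b,a]^{ij\binom{p^n}{2}}=[b,a]^{ij\binom{p^n}{2}}$. The key point is that $\binom{p^n}{2}$ is divisible by $p$ exactly under the stated hypotheses: for odd $p$ this holds for all $n\ge1$, while for $p=2$ it holds precisely when $n\ge2$. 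This is where the case distinction in the hypothesis originates, and why $M_2(1,1,1)\cong D_8$ must be excluded. Hence every element of $G$ has order dividing $p^n$, so $G$ satisfies the identity $x^{p^n}\approx1$; as this is an equation it is inherited by the whole quasivariety $\mathsf{Q}(G)$, and consequently $H$ is a finite $p$-group of exponent dividing $p^n$.

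Next I would invoke R\'edei's classification (Lemma~\ref{lem: red}): the minimal nonabelian $p$-group $H$ is isomorphic to $Q_8$, to some $M_p(m',n')$, or to some $M_p(m',n',1)$, and in each case the orders $p^{m'}$ and $p^{n'}$ of the generators force $m',n'\le n$. A short case analysis then closes the argument. The group $Q_8$ has order $8<2^{2n+1}=|G|$ (using $n\ge2$), so it is ruled out by Lemma~\ref{lem: group1}; a metacyclic $M_p(m',n')$ has order $p^{m'+n'}\le p^{2n}<|G|$ and is likewise excluded; and a nonmetacyclic $M_p(m',n',1)$ has order $p^{m'+n'+1}\le p^{2n+1}=|G|$, so the lower bound $|G|\le|H|$ forces $m'+n'=2n$, whence $m'=n'=n$ and $H\cong M_p(n,n,1)=G$. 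I expect the only genuinely delicate point to be the exponent computation, and in particular confirming that the divisibility of $\binom{p^n}{2}$ by $p$ is exactly what the hypotheses guarantee; the remaining case analysis is then routine bookkeeping with element orders.
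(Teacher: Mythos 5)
Your proof is correct, and it takes a genuinely different route through the middle of the argument. The endpoints coincide with the paper's proof: you compute $\exp(G)=p^n$ via Lemma~\ref{ber1} exactly as the paper does (including the key observation that $p\mid\binom{p^n}{2}$ precisely when $p$ is odd or $n\geq 2$, which is where the hypothesis and the exclusion of $M_2(1,1,1)\cong D_8$ come from), and you close with the same numerics $s,t\le n$ and $s+t\ge 2n$ forcing $H\cong M_p(n,n,1)$. Where you diverge is in how the metacyclic possibilities for $H$ are eliminated. The paper first proves structurally that $H$ is nonmetacyclic: after reducing (with a ``without loss of generality'' that itself needs some justification) to $H\le P=G\times G$, it uses Lemma~\ref{lem: group1} to get $\pi_i(H)=G$ for some $i$, deduces $P_iH=P$, obtains $G\cong H/(H\cap P_i)$, and invokes the fact that metacyclicity passes to quotients; only then does it apply R\'edei's classification restricted to type $(iii)$. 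You instead run the full R\'edei trichotomy and kill $Q_8$ and $M_p(m',n')$ by pure order counting: the identity $x^{p^n}\approx 1$, inherited by all of $\mathsf{Q}(G)$, caps the generator orders, so any metacyclic candidate has order at most $p^{2n}<p^{2n+1}=|G|$, contradicting the lower bound $|G|\le|H|$ of Lemma~\ref{lem: group1}. This is valid (the generators $a,b$ do have orders exactly $p^{m'},p^{n'}$ in R\'edei's presentations, which is all you use) and arguably cleaner, since it sidesteps the two-copy reduction and the quotient argument entirely; what the paper's route buys in exchange is the stronger structural information that $H$ surjects onto $G$, independent of order comparisons across the R\'edei types. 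One cosmetic remark: at the very end, citing Corollary~\ref{cor:quasi3} is unnecessary, since isomorphic groups trivially generate the same quasivariety --- that corollary is really needed for the converse direction, as in Proposition~\ref{pro: limit1}.
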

\begin{proof}
We first show that ${\rm exp}(G)=p^m$, where ${\rm exp}(G)$ denotes the least common multiple of the orders of all elements of $G$.
Let $\alpha$ be an arbitrary element of $G$. Then there exist $s_1, s_2, s_3\geq 1$ such that
$\alpha=a^{s_1}b^{s_2}c^{s_3}$. Furthermore, we have
\begin{align*}
{\alpha}^{p^m}
&=(a^{s_1}b^{s_2}c^{s_3})^{p^m}=(a^{s_1}b^{s_2})^{p^m}c^{s_3p^m}=(a^{s_1}b^{s_2})^{p^m}&\\
&=a^{s_1p^m}b^{s_2p^m}[b^{s_2}, a^{s_1}]^{\binom{p^m}{2}}\quad (\text{by Lemma}~\ref{ber1})\\
&=[b^{s_2}, a^{s_1}]^{\binom{p^m}{2}}\\
&=[b, a]^{s_1s_2\binom{p^m}{2}}  \quad (\text{by Lemma}~\ref{ber1})\\
&=c^{s_1s_2(p-1)\binom{p^m}{2}}&\\
&=1.&
\end{align*}
Thus ${\rm exp}(G)=p^m$.

Suppose now that $H$ is a finite minimal nonabelian group in $\mathsf{Q}(G)$. Then
$H$ is isomorphic to a subgroup of the direct product of finite copies of $G$.
Without loss of generality, we may assume that $H$ is a subgroup of the group $P$ of two copies of $G$.
Let $P_1$ denote the subgroup of $P$ generated by $(a, 1)$ and $(b, 1)$, and let $P_2$ denote the subgroup of $P$ generated by $(1, a)$ and $(1, b)$.
Then both $P_1$ and $P_2$ are normal and are isomorphic to $G$. This implies that $P$ is isomorphic to $P_1\times P_2$.
Since $H$ is nonabelian, it follows from Lemma \ref{lem: group1} that either $\pi_1(H)=G$ or $\pi_2(H)=G$.
Furthermore, we have that either $P_1H=P$ or $P_2H=P$. Assume that $P_1H=P$. Then
\[
G\cong P_2\cong P/{P_1}=P_1H/{P_1}\cong H/{(H\cap P_1)}.
\]
This shows that $H/{H\cap P_1}$ is nonmetacyclic and so is $H$.

By Lemma \ref{lem: red} there exist $s, t\geq 1$ such that $H$ is isomorphic to $M_p(s, t, 1)$.
Since ${\rm exp}(G)=p^m$, it follows that ${\rm exp}(H)\leq p^m$ and so $s, t\leq m$.
On the other hand, we have by Lemma \ref{lem: group1} that $|G|\leq |H|$ and so
$2m\leq s+t$. Hence $s=t=m$ and so $H$ is isomorphic to $G$. We conclude that $\mathsf{Q}(H)=\mathsf{Q}(G)$ as required.
\end{proof}

By Propositions \ref{pro: limit1}, \ref{pro: limit2} and \ref{pro: limit3} and Lemma \ref{lem: red}
we can establish the following theorem, which is the main result of this section.
\begin{thm}\label{thm1}
Let $G$ be a finite minimal nonabelian $p$-group. Then $\mathsf{V}(\flat(G))$ is a limit variety
if and only if $G$ is isomorphic to $M_p(m, n, 1)$ for some $m=n\geq 2$ if $p=2$ and some $m=n\geq 1$ if $p>2$.
\end{thm}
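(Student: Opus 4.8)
The plan is to combine the characterisation of Proposition~\ref{pro: limit1} with R\'edei's classification (Lemma~\ref{lem: red}) and the two structural results, Propositions~\ref{pro: limit2} and~\ref{pro: limit3}; essentially all of the real content has already been isolated in those statements, so the theorem reduces to assembling them correctly. By Proposition~\ref{pro: limit1}, $\mathsf{V}(\flat(G))$ is a limit variety precisely when $G$ is the unique minimal nonabelian group in $\mathsf{Q}(G)$ up to isomorphism, so it suffices to determine which finite minimal nonabelian $p$-groups enjoy this uniqueness property. Since $G$ is a finite minimal nonabelian $p$-group, Lemma~\ref{lem: red} tells us that $G$ is isomorphic to one of $Q_8$, $M_p(m,n)$ with $m\ge 2,\ n\ge 1$, or $M_p(m,n,1)$ with $m,n\ge 1$.

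For the forward implication I would argue contrapositively. Suppose $G$ falls into one of the families listed in Proposition~\ref{pro: limit2}, namely $Q_8$, some $M_p(m,n)$, or some $M_p(m,n,1)$ with $m\ne n$. Then Proposition~\ref{pro: limit2} supplies a finite minimal nonabelian group $H$ with $\mathsf{Q}(H)\subsetneq\mathsf{Q}(G)$; by Corollary~\ref{cor:quasi3} the proper containment forces $H\not\cong G$, so $G$ is not the unique minimal nonabelian group in $\mathsf{Q}(G)$, and hence $\mathsf{V}(\flat(G))$ is not a limit variety by Proposition~\ref{pro: limit1}. Consequently, if $\mathsf{V}(\flat(G))$ is limit, then the only survivor in R\'edei's list is $M_p(m,n,1)$ with $m=n$. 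The one point requiring care is the exceptional isomorphism recorded in Lemma~\ref{lem: red}: when $p=2$ the group $M_2(1,1,1)$ coincides with $M_2(2,1)$, which is of the form $M_p(m,n)$ and is therefore already eliminated by Proposition~\ref{pro: limit2}(ii). This is exactly why the constraint tightens to $m=n\ge 2$ when $p=2$, whereas $m=n\ge 1$ remains admissible for $p>2$.

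For the converse I would take $G\cong M_p(m,n,1)$ with $m=n\ge 2$ (if $p=2$) or $m=n\ge 1$ (if $p>2$) and verify the uniqueness hypothesis of Proposition~\ref{pro: limit1} directly. If $H$ is any finite minimal nonabelian group in $\mathsf{Q}(G)$, then Proposition~\ref{pro: limit3} gives $\mathsf{Q}(H)=\mathsf{Q}(G)$, and Corollary~\ref{cor:quasi3} then yields $H\cong G$. Thus $G$ is the unique minimal nonabelian group in its quasivariety, and Proposition~\ref{pro: limit1} returns the conclusion that $\mathsf{V}(\flat(G))$ is a limit variety.

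The assembly itself is short because the difficult work lives in the earlier propositions, so the only genuine obstacle is the combinatorial bookkeeping around R\'edei's parameters. In particular, one must make sure the single exceptional coincidence $M_2(1,1,1)\cong M_2(2,1)\cong D_8$ is accounted for, so that the boundary case $p=2$, $m=n=1$ is correctly excluded, and simultaneously check that no admissible $M_p(m,m,1)$ is accidentally swept up by the families appearing in Proposition~\ref{pro: limit2}.
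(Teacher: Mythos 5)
Your proposal is correct and follows exactly the paper's route: the paper proves Theorem~\ref{thm1} simply by citing Propositions~\ref{pro: limit1}, \ref{pro: limit2}, \ref{pro: limit3} and Lemma~\ref{lem: red}, and your assembly (uniqueness criterion, elimination via Proposition~\ref{pro: limit2} together with Corollary~\ref{cor:quasi3}, and the converse via Proposition~\ref{pro: limit3}) is precisely the intended argument. Your explicit handling of the exceptional coincidence $M_2(1,1,1)\cong M_2(2,1)\cong D_8$, which explains the boundary $m=n\ge 2$ when $p=2$, is a detail the paper leaves implicit in Lemma~\ref{lem: red}, and you have it right.
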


\begin{cor}
There is an infinite number of limit varieties of ai-semirings.
\end{cor}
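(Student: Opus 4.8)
The plan is to exhibit an explicit infinite family of finite minimal nonabelian $p$-groups, each of which generates a limit variety by Theorem \ref{thm1}, and then to verify that the resulting varieties are pairwise distinct. The simplest choice is to fix an odd prime $p$ and consider the groups $M_p(m, m, 1)$ for $m = 1, 2, 3, \dots$; by the case $p > 2$ of Theorem \ref{thm1}, each such group generates a limit variety $\mathsf{V}(\flat(M_p(m, m, 1)))$ of ai-semirings. (Equally one could fix $p = 2$ and take $m \geq 2$.)

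It then remains to check that distinct values of $m$ yield distinct limit varieties. Each $M_p(m, m, 1)$ is a finite minimal nonabelian group, so by Corollary \ref{cor:quasi3} one has $\mathsf{V}(\flat(M_p(m, m, 1))) = \mathsf{V}(\flat(M_p(m', m', 1)))$ if and only if $M_p(m, m, 1) \cong M_p(m', m', 1)$. By Lemma \ref{lem: red} the presentations $M_p(m, n, 1)$ with different parameters give non-isomorphic groups (the only coincidence, $M_2(2,1) \cong M_2(1,1,1)$, does not arise within our family), so $M_p(m, m, 1) \cong M_p(m', m', 1)$ forces $m = m'$. Alternatively, one may simply observe that $|M_p(m, m, 1)| = p^{2m+1}$, so distinct $m$ give groups of distinct order and hence non-isomorphic groups. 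Either route shows the varieties are pairwise distinct.

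Since $m$ ranges over infinitely many values, this produces infinitely many pairwise distinct limit varieties of ai-semirings, which is the assertion. I do not anticipate any substantial obstacle: the corollary is an immediate harvest of Theorem \ref{thm1}, and the only point requiring genuine care is confirming that the chosen generators are non-isomorphic, so that Corollary \ref{cor:quasi3} separates the varieties they generate.
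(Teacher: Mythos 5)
Your proof is correct and follows essentially the same route as the paper, whose one-line proof cites exactly the three ingredients you use: Theorem \ref{thm1} to produce the family $\mathsf{V}(\flat(M_p(m,m,1)))$ of limit varieties, and Corollary \ref{cor:quasi3} together with Lemma \ref{lem: red} (or, as you note, simply the orders $p^{2m+1}$) to separate them. You have merely written out the details the paper leaves implicit.
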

\begin{proof}
This follows from Corollary \ref{cor:quasi3}, Lemma \ref{lem: red} and Theorem \ref{thm1}.
\end{proof}

\begin{cor}
The $27$-element group $M_3(1, 1, 1)$ is
the smallest group whose flat extension generates a limit variety.
\end{cor}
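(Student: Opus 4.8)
The plan is to deduce this from Theorem~\ref{thm1} together with the size bound in Lemma~\ref{lem: group1}, but carried out for an \emph{arbitrary} finite group $G$ rather than only for minimal nonabelian $p$-groups. I would split the statement into an easy upper bound and a lower bound. For the upper bound, $M_3(1,1,1)$ has order $3^{1+1+1}=27$ and is covered by the case ``$m=n\geq 1$, $p>2$'' of Theorem~\ref{thm1}, so $\mathsf{V}(\flat(M_3(1,1,1)))$ is a limit variety; this already exhibits a group of order $27$ with the required property.

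For the lower bound, let $G$ be any finite group for which $\mathsf{V}(\flat(G))$ is a limit variety, and aim to prove $|G|\geq 27$. Since a limit variety is nonfinitely based, Lemma~\ref{lem1} forces $G$ to have a nonabelian Sylow subgroup; in particular $G$ is nonabelian. By Lemma~\ref{lem:2gen} and the remark following it, $\mathsf{V}(\flat(G))$ contains a limit subvariety $\mathsf{V}(\flat(H))$ for some \emph{minimal nonabelian} $p$-group $H$. Because $\mathsf{V}(\flat(G))$ is itself limit, it has no proper limit subvariety (a proper subvariety is finitely based, hence not limit), so $\mathsf{V}(\flat(H))=\mathsf{V}(\flat(G))$; Theorem~\ref{thm:correspondence}(1) then upgrades this to $\mathsf{Q}(H)=\mathsf{Q}(G)$.

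Now I would combine two observations. First, $H$ is a minimal nonabelian $p$-group whose flat extension generates a limit variety, so Theorem~\ref{thm1} identifies it as $H\cong M_p(m,m,1)$ with $m\geq 2$ if $p=2$ and $m\geq 1$ if $p>2$; comparing the orders $2^{2m+1}\geq 32$, $3^{2m+1}\geq 27$, and $p^{2m+1}\geq 125$ for $p\geq 5$ gives $|H|\geq 27$, with equality precisely when $H=M_3(1,1,1)$. Second, $G$ is a finite nonabelian group in $\mathsf{Q}(G)=\mathsf{Q}(H)$, so Lemma~\ref{lem: group1}, applied to the minimal nonabelian group $H$ and the nonabelian group $G$, yields $|H|\leq|G|$. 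Chaining these gives $|G|\geq|H|\geq 27$; tracking the equality case — using the computation in Proposition~\ref{pro: limit3} that $\mathsf{Q}(M_3(1,1,1))$ consists of exponent-$3$ groups, which excludes the metacyclic group of order $27$ — shows $G\cong M_3(1,1,1)$ whenever $|G|=27$, so $M_3(1,1,1)$ is indeed the smallest.

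The step that needs the most care, and the likely main obstacle, is the reduction from an arbitrary $G$ to a minimal nonabelian generator $H$ with $\mathsf{Q}(H)=\mathsf{Q}(G)$: one must verify that the limit subvariety produced by Lemma~\ref{lem:2gen} genuinely coincides with $\mathsf{V}(\flat(G))$ (using that a limit variety contains no proper limit subvariety) and that this equality of varieties transfers back to an equality of quasivarieties through Theorem~\ref{thm:correspondence}(1). Everything else is the routine finite comparison of the exponents $2m+1$ across the admissible primes.
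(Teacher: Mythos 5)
Your proposal is correct and follows essentially the same route as the paper, whose own proof is just the one-line deduction from Lemma~\ref{lem: red} and Theorem~\ref{thm1}: you have simply made explicit the reduction from an arbitrary finite $G$ to a minimal nonabelian generator $H$ with $\mathsf{Q}(H)=\mathsf{Q}(G)$ (via Lemma~\ref{lem1}, Lemma~\ref{lem:2gen}, Theorem~\ref{thm:correspondence} and Lemma~\ref{lem: group1}), together with the order comparison $2^{2m+1}\geq 32$ versus $3^{2m+1}\geq 27$, all of which the paper leaves implicit from the preceding discussion. One minor citation slip: the fact that limit subquasivarieties are generated by \emph{minimal} nonabelian $p$-groups is asserted in the paragraph preceding Lemma~\ref{lem: group1} rather than in the remark immediately following Lemma~\ref{lem:2gen} (which concerns nilpotency class~2), and in the equality case you could shortcut your exponent argument by noting that any nonabelian group of order $p^3$ is automatically minimal nonabelian, so Theorem~\ref{thm1} applies to $G$ directly.
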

\begin{proof}
This is a consequence of Lemma \ref{lem: red} and Theorem \ref{thm1}.
\end{proof}

It is possible for limit varieties to have infinitely many subvarieties and to be not finitely generated.  As a consequence of Lemma \ref{lem:2gen}, it follows that when $G$ is a finite group and $\mathsf{V}(\flat(G))$ is nonfinitely based, then all limit subvarieties are finitely generated.  The next proposition (ultimately just an application of Theorem \ref{thm:correspondence} to the classification of \cite{ols}) shows that the subvariety lattice is always finite.  We use the notation $\oplus$ to denote the linear sum of lattices \cite{dav}.
\begin{pro}
Let $G$ be a finite group of exponent $d$ and let $L_d$ denote the lattice of divisors of $d$.  If $\mathsf{V}(\flat(G))$ generates a limit variety then the lattice of subvarieties of $\mathsf{V}(\flat(G))$ is isomorphic to $1 \oplus L_d \oplus 1$; in particular, $\mathsf{V}(\flat(G))$ has only finitely many subvarieties.
\end{pro}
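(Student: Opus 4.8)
The plan is to transport the question to the lattice of subquasivarieties of $\mathsf{Q}(G)$ by means of Theorem~\ref{thm:correspondence}, and then to identify that lattice explicitly. Since $\mathsf{V}(\flat(G))$ is a limit variety, Lemma~\ref{lem:group}(2) shows that $\mathsf{Q}(G)$ is a limit quasivariety. By Theorem~\ref{thm:correspondence}(1) the operator $K\mapsto\mathsf{V}(\flat(K))$ is a lattice isomorphism from the subquasivarieties of $B(d)$ onto the \emph{nontrivial} subvarieties of $\mathsf{V}(\flat(B(d)))$; it sends $\mathsf{Q}(G)$ to $\mathsf{V}(\flat(G))$ and hence restricts to a lattice isomorphism between the down-set of $\mathsf{Q}(G)$ (its subquasivarieties) and the down-set of $\mathsf{V}(\flat(G))$ (its nontrivial subvarieties). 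Since the one-element semiring variety lies strictly below every nontrivial subvariety, the full subvariety lattice of $\mathsf{V}(\flat(G))$ is $1\oplus\mathcal L$, where $\mathcal L$ is the subquasivariety lattice of $\mathsf{Q}(G)$. It therefore suffices to prove $\mathcal L\cong L_d\oplus1$, equivalently that the \emph{proper} subquasivarieties of $\mathsf{Q}(G)$ form a copy of $L_d$ (the top of $L_d\oplus1$ being $\mathsf{Q}(G)$ itself).

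I would first pin down that $G$ is a $p$-group. As $\mathsf{Q}(G)$ is not finitely q-based, Ol$'$shanski\u{\i}'s theorem \cite{ols} supplies a nonabelian Sylow $p$-subgroup $S\leq G$; then $S$ is not finitely q-based and $\mathsf{Q}(S)\subseteq\mathsf{Q}(G)$, so minimality of the limit quasivariety forces $\mathsf{Q}(S)=\mathsf{Q}(G)$. As $\mathsf{Q}(S)=\mathsf{SP}(S)$ consists of $p$-groups, $G$ is a $p$-group and $d=\exp(G)=p^m$ for some $m\geq1$; in particular $L_d$ is the chain $1\mid p\mid\dots\mid p^m$. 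Proposition~\ref{pro: limit1} then gives that $G$ is the unique minimal nonabelian group of $\mathsf{Q}(G)$ up to isomorphism.

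The heart of the matter is to show that every proper subquasivariety of $\mathsf{Q}(G)$ consists entirely of abelian groups. Here I would use that $\mathsf{Q}(G)=\mathsf{SP}(G)$ is locally finite (its relatively free algebra on $n$ generators embeds in the finite power $G^{(G^n)}$, hence is finite), so any subquasivariety is generated by its finite members. If a proper subquasivariety $\mathsf{Q}'$ contained a nonabelian finite group $H$, then $H$ would contain a minimal nonabelian subgroup, which by Proposition~\ref{pro: limit1} is isomorphic to $G$; this subgroup lies in $\mathsf{Q}'$, forcing $\mathsf{Q}'=\mathsf{Q}(G)$ and contradicting properness. Thus each proper subquasivariety is a quasivariety of abelian $p$-groups of exponent dividing $p^m$.

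Finally I would invoke the bounded-exponent abelian classification already used in the proof of Lemma~\ref{lem:2gen}: among abelian groups of exponent dividing $p^m$, quasivarieties coincide with varieties and are determined by their maximal exponent. Concretely, a quasivariety $\mathsf{Q}'$ of maximal exponent $p^k$ contains a group with an element of order $p^k$, hence contains $\mathbb Z_{p^k}$, and every finite abelian group of exponent dividing $p^k$ embeds in a power of $\mathbb Z_{p^k}$, so $\mathsf{Q}'=\mathsf{Q}(\mathbb Z_{p^k})$ equals the variety of abelian groups of exponent dividing $p^k$. Since $\exp(G)=p^m$ yields $\mathbb Z_{p^m}\leq G$, all exponents $p^k$ with $0\leq k\leq m$ occur (the case $k=0$ being the trivial group quasivariety), and these varieties form the chain $L_{p^m}=L_d$ under inclusion. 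Adjoining $\mathsf{Q}(G)$ on top gives $\mathcal L\cong L_d\oplus1$, whence the subvariety lattice is $1\oplus L_d\oplus1$, which is finite. I expect the main obstacle to be the careful verification of this bounded-exponent abelian classification, together with the local-finiteness reduction that permits the unique-minimal-nonabelian argument to be applied one finite member at a time.
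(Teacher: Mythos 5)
Your proof is correct and takes essentially the same route as the paper: both pass through Theorem~\ref{thm:correspondence} to get the decomposition $1\oplus L_Q$, show that every proper subquasivariety of $\mathsf{Q}(G)$ is abelian, and identify the lattice of abelian subquasivarieties with $L_d$ via the fact that bounded-exponent abelian quasivarieties are varieties determined by their exponent. Your write-up simply makes explicit several steps the paper leaves implicit (the reduction of $G$ to a $p$-group, local finiteness of $\mathsf{Q}(G)$, and the unique-minimal-nonabelian argument via Proposition~\ref{pro: limit1}), which incidentally tightens the paper's slightly loose claim that every cyclic group of order dividing $d$ embeds in $G$.
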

\begin{proof}
Let $L$ denote the lattice of subvarieties of $\mathsf{V}(\flat(G))$ and $L_Q$ denote the lattice of subquasivarieties of $\mathsf{Q}(G)$.  By Theorem \ref{thm:correspondence}, the lattice $L$ is isomorphic to $1\oplus L_Q$; our goal is to show that $L_Q\cong L_d\oplus 1$.  All proper subquasivarieties of $\mathsf{Q}(G)$ are abelian, so that $\mathsf{Q}(G)$ covers the lattice of its abelian subquasivarieties.  The variety of abelian groups of exponent $d$ is generated by the cyclic groups whose orders divide $d$, and as the exponent of $G$ is the lowest common multiple of the orders of its elements, it follows that each such abelian group is a subgroup of $G$.  Hence the lattice of  abelian subquasivarieties of $\mathsf{Q}(G)$ is simply the lattice of varieties of abelian groups of exponent dividing $d$, which is isomorphic to $L_d$.
\end{proof}
There are also limit varieties of the form $\mathsf{V}(\flat(G))$ that are not generated by finite groups.  Indeed, \cite[Theorem 5 and 6]{koz2} implies that for every sufficiently large prime~$p$ there are groups $G$ of  exponent $p$ such that $\mathsf{V}(G)$ is a limit variety of groups, and whose proper subvarieties are abelian.  The variety $\mathsf{V}(G)$ is a quasivariety, so must also contain a limit subquasivariety.  Let $H$ be a group generating such a quasivariety; as before we may assume that $H$ is $2$-generated.   Now $H$ is nonabelian, and as all proper subvarieties of $\mathsf{V}(G)$ are abelian, it follows that $\mathsf{V}(H)=\mathsf{V}(G)$ so that $H$ is not locally finite; in particular $H$ is infinite, so is not of the form covered by Theorem \ref{thm1}.  There are continuum many varieties of this form.
\begin{thm}\label{thm:infinitegroup}
There are continuum many distinct limit varieties of the form $\mathsf{V}(\flat(G))$ that are not locally finite.
\end{thm}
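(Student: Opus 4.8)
The plan is to transport everything to group quasivarieties via Theorem~\ref{thm:correspondence} and Lemma~\ref{lem:group}, and to feed in Kozhevnikov's limit varieties of prime exponent. I would fix a prime $p$ large enough that \cite{koz2} supplies continuum many distinct limit varieties $V_\alpha=\mathsf{V}(G_\alpha)$ of groups of exponent $p$, each having all of its proper subvarieties abelian. By the lattice isomorphism $K\mapsto\mathsf{V}(\flat(K))$ of Theorem~\ref{thm:correspondence}, taken with $d=p$ so that everything sits over the single quasivariety $B(p)$, together with Lemma~\ref{lem:group}(2), it then suffices to produce inside $B(p)$ continuum many pairwise distinct limit quasivarieties of the form $\mathsf{Q}(H)$ with $H$ finitely generated and not locally finite.

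First I would pass from the $V_\alpha$ to limit quasivarieties. Each $V_\alpha$ is nonfinitely based, and a compactness argument shows it is even nonfinitely q-based: were a finite set $\Phi$ of quasiequations to axiomatise $V_\alpha$ as a quasivariety, then $\Mod(\Eq(V_\alpha))=V_\alpha=\Mod(\Phi)$ would give $\Eq(V_\alpha)\models\Phi$, whence compactness would extract a finite $E_0\subseteq\Eq(V_\alpha)$ with $\Mod(E_0)=V_\alpha$, contradicting nonfinite basability. The standard Zorn's-lemma argument (finitary deduction, as for the variety statement recalled in the introduction) then yields a limit subquasivariety $Q_\alpha\subseteq V_\alpha$. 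Being nonfinitely q-based, $Q_\alpha$ is nonabelian, so $\mathsf{V}(Q_\alpha)$ is a nonabelian subvariety of $V_\alpha$ and hence equals $V_\alpha$; thus $\alpha\neq\beta$ forces $V_\alpha\neq V_\beta$, hence $Q_\alpha\neq Q_\beta$, giving continuum many distinct limit quasivarieties.

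Next I would realise each $Q_\alpha$ by a single finitely generated group and verify the remaining properties. As in Lemma~\ref{lem:2gen}, choosing non-commuting $a,b$ in a member of $Q_\alpha$ and setting $H_\alpha=\langle a,b\rangle$ produces a $2$-generated nonabelian group in $Q_\alpha$; one then argues that $\mathsf{Q}(H_\alpha)=Q_\alpha$. Granting this, $\mathsf{V}(\flat(H_\alpha))$ has the required form and is a limit variety by Lemma~\ref{lem:group}(2). If $H_\alpha$ were finite, then $\mathsf{V}(H_\alpha)=V_\alpha$ would be finitely based by the Oates--Powell theorem, contradicting that $V_\alpha$ is limit; hence $H_\alpha$ is infinite, the finitely generated quasivariety $\mathsf{Q}(H_\alpha)$ is not locally finite, and $\mathsf{V}(\flat(H_\alpha))$ is not locally finite by the local-finiteness clause of Theorem~\ref{thm:correspondence}(2). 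Distinctness of the semirings transfers from distinctness of the $\mathsf{Q}(H_\alpha)$ through the isomorphism, yielding continuum many examples.

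I expect the main obstacle to be the single-generation step $\mathsf{Q}(H_\alpha)=Q_\alpha$. In the locally finite setting of Lemma~\ref{lem:2gen} it was immediate, since the $2$-generated subgroup is then finite and nonabelian, hence nonfinitely q-based by Ol$'$shanski\u{\i}~\cite{ols}, and so generates the whole limit quasivariety. Here $Q_\alpha$ is not locally finite, $H_\alpha$ may be infinite, and \cite{ols} no longer applies; a priori $\mathsf{Q}(H_\alpha)$ could be a proper, hence finitely q-based, subquasivariety even while $\mathsf{V}(H_\alpha)=V_\alpha$ remains nonfinitely based, because single generation of a quasivariety is a genuine restriction. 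The crux will therefore be to show that an appropriately chosen $2$-generated nonabelian group in $V_\alpha$ generates a nonfinitely q-based quasivariety, which is where the fine structure of Kozhevnikov's exponent-$p$ construction, and the hypothesis that all proper subvarieties of $V_\alpha$ are abelian, must be exploited.
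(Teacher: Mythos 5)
Your skeleton coincides with the paper's: Kozhevnikov's exponent-$p$ limit varieties $V_\alpha$ with abelian proper subvarieties, the compactness argument upgrading nonfinite basedness to nonfinite q-basedness, Zorn's lemma for a limit subquasivariety $Q_\alpha$, nonabelianness of $Q_\alpha$ forcing $\mathsf{V}(Q_\alpha)=V_\alpha$ and hence pairwise distinctness, Oates--Powell for infiniteness, and transfer through Theorem~\ref{thm:correspondence} and Lemma~\ref{lem:group}; all of those completed steps are sound and are exactly what the paper does (the paper leaves the compactness and distinctness details implicit, which you supply correctly). The divergence is at the step you flag as the crux, and here your diagnosis of what is needed is too pessimistic: the paper closes this step with a soft, general observation, not with any fine structure of Kozhevnikov's construction. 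First, the theorem as stated does not require the generator to be $2$-generated, only to be a \emph{single} group: a quasivariety of groups is generated by (a set of isomorphism representatives of) its finitely generated members, since a quasiequation failing in some member already fails in a finitely generated subgroup, and each of these members embeds into their direct product (groups have trivial subgroups, so every factor embeds in a product); thus $Q_\alpha=\mathsf{Q}(H_\alpha)$ for one group $H_\alpha$ automatically. Second, non-local-finiteness does not need $\mathsf{Q}(H)=Q_\alpha$ for a $2$-generated $H$. The paper's key remark is that every quasivariety contains the relatively free algebras of the variety it generates (the free algebra for $\mathsf{V}(K)$ embeds in a product of members of $K$, and $Q_\alpha$ is $\mathsf{SP}$-closed). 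Since $Q_\alpha$ is nonabelian it generates $V_\alpha$, so the $2$-generated relatively free group $F_2(V_\alpha)$ lies in $Q_\alpha$; by exactly your Oates--Powell argument $F_2(V_\alpha)$ must be infinite (otherwise $V_\alpha=\mathsf{V}(F_2(V_\alpha))$ would be finitely based), and an infinite finitely generated member already witnesses that $Q_\alpha$, hence $\mathsf{V}(\flat(H_\alpha))$ via Theorem~\ref{thm:correspondence}(2), is not locally finite.

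Concerning the stronger claim you were trying to prove, that the limit quasivariety is $\mathsf{Q}(H)$ for a $2$-generated $H$: this is what the paper's closing remark asserts, with $H=F_2(V_\alpha)$, and the supporting observation is that since $\mathsf{V}(F_2(V_\alpha))=V_\alpha$ (it is nonabelian and all proper subvarieties of $V_\alpha$ are abelian), the quasivariety $\mathsf{Q}(F_2(V_\alpha))$ contains all the relatively free groups of $V_\alpha$ and is therefore the \emph{least} quasivariety generating $V_\alpha$; consequently every proper subquasivariety of it fails to generate $V_\alpha$, so is abelian and finitely q-based, and $\mathsf{Q}(F_2(V_\alpha))$ is a limit quasivariety provided it is nonfinitely q-based. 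So the residual issue you correctly sense is confined to nonfinite q-basedness of this one quasivariety (Ol$'$shanski\u{\i}'s theorem \cite{ols} indeed being unavailable for infinite groups), and it plays no role in the theorem itself, which survives on single generation plus the $F_2(V_\alpha)$ argument above. In short: your proposal is the paper's proof up to the last step, but the obstacle you reserve for ``fine structure of Kozhevnikov's exponent-$p$ construction'' is dissolved by the free-algebra containment, which you did not exploit.
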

Every quasivariety contains the free algebras for the variety it generates, so the smallest quasivariety generating a given variety is the quasivariety generated by the relatively free algebras.  So the generator $H$ for a limit quasivariety arising from Kozhevnikov's arguments in \cite{koz2} can be the 2-generated relatively free group in the corresponding limit variety.

\section{The variety of $S_c(abc)$}
In this section we present a further limit ai-semiring variety that contains no flat extensions of groups.
More precisely, we shall show that the variety $\mathsf{V}(S_c(abc))$ generated by $S_c(abc)$ is limit.
Let $\textbf{F}$ denote the variety generated by all flat semirings.
\begin{lem}\label{lem: flat}
\begin{itemize}
\item [$(i)$] $\textbf{F}$ is finitely based and each subdirectly irreducible member of this variety
              is a flat semiring that has a unique $0$-minimal multiplicative ideal;
\item [$(ii)$] $\mathsf{V}(S_c(abc))$ is nonfinitely based;
\item [$(iii)$] $\mathsf{V}(S_c(a))$ is finitely based, is a minimal nontrivial subvariety of $\mathsf{V}(S_c(abc))$
                and is determined within the variety $\mathsf{V}(S_c(abc))$ by the identity
\begin{align}
x_1x_2 & \approx y_1y_2. \label{i4}
\end{align}
\end{itemize}
\end{lem}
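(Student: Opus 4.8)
The plan is to treat the three parts separately, leaning on the flat-semiring theory of \cite{jrz} (and the pointed semidiscriminator framework of \cite{jac08}) for the axiomatisability and the nonfinite basis statements, and proving (iii) by hand.

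For (i) I would quote the finite axiomatisability of $\textbf{F}$ from \cite{jrz} (it can also be extracted from the description of flat semirings in \cite{jac08}), and I would prove the description of the subdirectly irreducibles directly via a congruence--ideal correspondence. First note that the class of flat semirings is closed under $\mathsf{S}$ and $\mathsf{P}_{\rm u}$, and that a homomorphic image of a flat semiring is again flat; hence every subdirectly irreducible member of $\textbf{F}$ lies in $\mathsf{HSP}_{\rm u}$ of the flat semirings and is itself flat. For a fixed flat semiring $S$ with top element $0$, I would then show that $Z\mapsto\theta_Z$, sending a multiplicative ideal $Z\ni 0$ to the congruence collapsing $Z$ to a point and leaving all other classes singletons, is a lattice isomorphism from the ideals of $(S,\cdot)$ onto $\Con S$. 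The key computation is that if $p\mathrel\theta q$ with $p\neq q$ then, testing additive compatibility at $z=p$ and at $z=q$ and using $p+q=0$ and $a+a=a$, one obtains $p\mathrel\theta 0$ and $q\mathrel\theta 0$; so every nontrivial class of $\theta$ sits inside $[0]_\theta$, and multiplicative compatibility forces $[0]_\theta$ to be an ideal. Conversely each $\theta_Z$ is easily checked to be a congruence, and $\theta_{Z_1}\cap\theta_{Z_2}=\theta_{Z_1\cap Z_2}$; hence a monolith exists if and only if the nonzero ideals have a least element, i.e. a unique $0$-minimal multiplicative ideal, which is exactly the assertion.

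For (ii) the plan is to apply the nonfinite basis machinery of \cite{jrz}: I would verify that $S_c(abc)$ satisfies the hypotheses of the general criterion there and so inherit a sequence of identities valid in $S_c(abc)$ together with critical flat semirings (for instance of the form $S_c(W)$ for suitable finite word sets $W$) witnessing that no finite subset of the identities of $S_c(abc)$ axiomatises $\mathsf{V}(S_c(abc))$. Because $S_c(abc)$ has commutative multiplication, the obstruction to a finite basis must come from the distributive interaction of products with the additive semilattice rather than from multiplicative noncommutativity. I expect isolating and checking the correct critical sequence to be the main obstacle, and this is precisely the input I would import from \cite{jrz}.

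For (iii) I would argue entirely by hand. The set $\{a,0\}$ is closed under both operations of $S_c(abc)$ (one has $a+a=a$ and $a\cdot a=0$), so $S_c(a)$ is a subalgebra of $S_c(abc)$ and $\mathsf{V}(S_c(a))\subseteq\mathsf{V}(S_c(abc))$; and in $S_c(a)$ every product is $0$, so \eqref{i4} holds. The crux is the reverse inclusion, that every member of $\mathsf{V}(S_c(abc))$ satisfying \eqref{i4} lies in $\mathsf{V}(S_c(a))$. Here I would use that $\mathsf{V}(S_c(abc))$ satisfies $x^2\approx x^2+z$, since in $S_c(abc)$ every square has a repeated letter and so evaluates to the absorbing top $0$. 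Assuming \eqref{i4} as well, substitution gives $xy\approx x^2$, whence $xy\approx xy+z$; thus every product equals the additive top, and by \eqref{i4} this top is a single element independent of the factors, so the multiplication is the constant map onto the top. Finally, for such an algebra a subdirect decomposition of its additive semilattice into two-element semilattices is automatically a semiring subdirect decomposition into copies of $S_c(a)$ (each nontrivial semilattice homomorphism sends top to top, and the multiplication is forced), so the algebra lies in $\mathsf{ISP}(S_c(a))\subseteq\mathsf{V}(S_c(a))$. This simultaneously shows that \eqref{i4} determines $\mathsf{V}(S_c(a))$ inside $\mathsf{V}(S_c(abc))$ and that $\mathsf{V}(S_c(a))$ is finitely based, by the ai-semiring axioms together with $xy\approx zw$ and $xy+z\approx xy$. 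Minimality is then immediate: $S_c(a)$ is two-element and $\mathsf{HS}(S_c(a))$ contains no nontrivial algebra besides $S_c(a)$, so $S_c(a)$ is the only nontrivial subdirectly irreducible in $\mathsf{V}(S_c(a))$ and every nontrivial subvariety recovers it.
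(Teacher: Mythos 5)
In part (i) your argument has a genuine gap at the step ``the class of flat semirings is closed under $\mathsf{S}$ and $\mathsf{P}_{\rm u}$, and homomorphic images are flat; hence every subdirectly irreducible member of $\textbf{F}$ lies in $\mathsf{HSP}_{\rm u}$ of the flat semirings.'' That inference is J\'onsson's lemma, and J\'onsson's lemma requires congruence distributivity, which fails here: by your own analysis in (iii), $\mathsf{V}(S_c(a))\subseteq \textbf{F}$ consists exactly of the semilattices-with-top carrying the constant multiplication onto the top, the congruences of such an algebra are precisely the congruences of its semilattice reduct (the constant operation imposes no condition), and congruence lattices of semilattices satisfy no nontrivial lattice identity (Freese--Nation), in particular are not distributive. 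Without congruence distributivity there is no general principle confining the subdirectly irreducibles of $\mathsf{HSP}(K)$ to $\mathsf{HSP}_{\rm u}(K)$, even when $K$ is closed under $\mathsf{H}$, $\mathsf{S}$ and $\mathsf{P}_{\rm u}$; semigroup varieties such as that of $B_2^1$, with continuum many subvarieties and hence infinitely many subdirectly irreducibles arising from a single finite generator, illustrate how badly this localisation can fail. So flatness of the subdirectly irreducibles of $\textbf{F}$ does not follow from your closure observations; it is exactly the content of \cite[Lemma~2.1]{jrz}, which obtains it directly from the explicit finite identity basis for $\textbf{F}$, and this is what the paper cites. Your congruence--ideal correspondence (collapsing any nontrivial pair into $[0]_\theta$ via $p=p+p\mathrel{\theta}p+q=0$) is correct and matches the paper's stated reason for the unique $0$-minimal ideal, but it applies only after flatness of the subdirectly irreducible algebra is already known. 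A minor quibble in the same part: ``monolith exists iff there is a unique $0$-minimal ideal'' should read ``iff there is a least nonzero ideal''; in an infinite algebra a unique $0$-minimal ideal need not be least, though only the implication you need (subdirect irreducibility implies a unique $0$-minimal ideal) is asserted in the lemma.

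Parts (ii) and (iii) are sound. For (ii) you import the nonfinite basis from \cite{jrz} exactly as the paper does (it cites \cite[Theorem 4.9]{jrz}), so there is nothing to check beyond locating the right result. For (iii) you take a genuinely different route: the paper simply quotes \cite{sr}, whereas you prove everything by hand, and correctly so --- the chain $xy\approx x^2\approx x^2+z$ valid under \eqref{i4} in $\mathsf{V}(S_c(abc))$ forces multiplication to be the constant map onto the additive top, any such algebra decomposes subdirectly into copies of $S_c(a)$ because every surjective semilattice homomorphism onto the two-element semilattice automatically preserves the constant multiplication, and this simultaneously delivers the explicit finite basis ($xy\approx zw$ and $xy+z\approx xy$ over the ai-semiring axioms) and minimality via the identification of $S_c(a)$ as the unique nontrivial subdirectly irreducible. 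Your version is more self-contained and informative than the paper's citation; only part (i) needs repair, most economically by citing \cite[Lemma~2.1]{jrz} for flatness of the subdirectly irreducibles and retaining your correspondence argument for the unique $0$-minimal ideal.
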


\begin{proof}
$(i)$ This follows from \cite[Lemma 2.1]{jrz} and the fact that there is a one-to-one order-preserving correspondence
between semiring congruences and multiplicative ideals on a flat semiring.

$(ii)$ This is a corollary of \cite[Theorem 4.9]{jrz}.

$(iii)$ First, it is trivial that $\mathsf{V}(S_c(a))$ is a subvariety of subvariety of $\mathsf{V}(S_c(abc))$ as $S_c(a)$ is isomorphic to a quotient (and a subsemiring) of $S_c(abc)$.   The remaining properties of $\mathsf{V}(S_c(a))$  can be found in \cite{sr}.
\end{proof}
In the proof of the following proposition, we will refer to \emph{prime} elements of a semiring.  These are elements that do not arise as a nontrivial product.  Because the underlying multiplicative semigroup of $S_c(ab)$ is $3$-nilpotent (every product of length $3$ is $0$) every nonzero element is either prime or the product of two primes, and the same is true for every semiring in the variety $\mathsf{V}(S_c(ab))$.
\begin{pro}\label{pro: S_c(ab)}
$\mathsf{V}(S_c(ab))$ is determined within the variety $\textbf{F}$ by the identities
\begin{align}
xy        & \approx yx; \label{i1}\\
x^2y      & \approx x^2;    \label{i2}\\
x_1x_2x_3 & \approx y_1y_2y_3.\label{i3}
\end{align}
\end{pro}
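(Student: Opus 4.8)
The plan is to prove the two inclusions of $\mathbf{K} = \mathsf{V}(S_c(ab))$ separately, where $\mathbf{K}$ denotes the subclass of $\textbf{F}$ axiomatised by (\ref{i1})--(\ref{i3}); being the intersection of a variety with an equational class, $\mathbf{K}$ is itself a variety. The inclusion $\mathsf{V}(S_c(ab)) \subseteq \mathbf{K}$ is routine: in $S_c(ab) = \{a, b, ab, 0\}$ multiplication is commutative, every element squares to $0$, and every product of three elements is $0$, so (\ref{i1}), (\ref{i2}) and (\ref{i3}) all hold (the latter two reducing to $0 \approx 0$). The content is in the reverse inclusion, which by Birkhoff's theorem reduces to showing that every subdirectly irreducible member $S$ of $\mathbf{K}$ lies in $\mathsf{V}(S_c(ab))$.

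So fix a subdirectly irreducible $S \in \mathbf{K}$. By Lemma \ref{lem: flat}(i) it is a flat semiring with a unique $0$-minimal multiplicative ideal, and it satisfies (\ref{i1})--(\ref{i3}). First I would record the forced multiplicative structure: since $0$ is the multiplicative zero, (\ref{i3}) gives $xyz = 0$ identically, and then (\ref{i2}) together with the instance $x\cdot x\cdot y = 0$ yields $x^2 = 0$; thus the multiplicative reduct is commutative, square-zero and $3$-nilpotent, so (as in the remark preceding the statement) every nonzero element is prime or a product of two distinct primes. Next I would analyse the multiplicative ideals: the principal ideal of a product-of-two-primes $z$ is $\{0, z\}$, while that of a prime $p$ is $\{0, p\} \cup \{pq : q \text{ prime}\}$, so every $0$-minimal ideal is a two-element set $\{0, \mu\}$ with $\mu$ either a product of two primes or a prime annihilating $S$. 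Imposing uniqueness of the $0$-minimal ideal together with $0$-cancellativity pins the structure down: either $S^2 = \{0\}$ and $S \cong S_c(a)$, or there is a single nonzero product value $z$, every prime has a unique partner (uniqueness of the partner is $0$-cancellativity, existence is uniqueness of the ideal), and the primes split into matched pairs $\{p_i, q_i\}_{i=1}^{k}$ with $p_i q_i = z$ and all other products $0$. Write $S_k$ for this last algebra, so that $S_1 \cong S_c(ab)$.

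The crux is then to show $S_k \in \mathsf{V}(S_c(ab))$ for every $k$. This cannot be done by embedding, since $S_k \notin \mathsf{Q}(S_c(ab))$ for $k \geq 2$: the only multiplicative idempotent of $S_c(ab)$ is $0$, so any homomorphism into a power must send $0 \mapsto 0$, and a coordinatewise analysis shows that the two factorisations $p_1 q_1 = p_2 q_2 = z$ cannot be realised in a power while keeping the cross product $p_1 q_2$ zero. Instead I would realise $S_k$ as a quotient of a subalgebra of a power. Encode the $2k$ primes as tuples in $\{a, b\}^n \subseteq S_c(ab)^n$ (for $n$ large) with $q_i$ the coordinatewise swap of $p_i$, using one coordinate as a sign bit and the remaining coordinates to index the pairs distinctly, so that the matched pairs are the only complementary pairs. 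Let $T$ be the subalgebra they generate. Every product in $T$ lands in $\{0, ab\}^n$, with the all-$ab$ tuple $\zeta$ arising only from matched pairs; one then checks that the partition of $T$ into the $2k+1$ distinguished singletons $\{p_i\}, \{q_i\}, \{\zeta\}$ together with one top class containing everything else is a semiring congruence whose quotient is $S_k$. Since $S_c(a)$ also embeds into $S_c(ab)$ as $\{0, a\}$, all subdirectly irreducibles of $\mathbf{K}$ lie in $\mathsf{V}(S_c(ab))$, which finishes the reverse inclusion.

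I expect the main obstacle to be exactly this last step: recognising that the subdirectly irreducibles of $\mathbf{K}$ form an infinite family $\{S_k\}$ rather than just $S_c(a)$ and $S_c(ab)$, and that the members with $k \geq 2$ witness the strict inclusion $\mathsf{Q}(S_c(ab)) \subsetneq \mathsf{V}(S_c(ab))$, so that they must be captured as proper quotients rather than as subalgebras of powers. Verifying that the proposed collapse of $T$ is genuinely a congruence with quotient $S_k$ --- in particular that no unnamed element of $T$ is forced into a distinguished class and that $\zeta$ has no unintended factorisation in $T$ --- is the one place where real care is needed; everything else is bookkeeping once the multiplicative reduct has been shown to be commutative, square-zero and $3$-nilpotent.
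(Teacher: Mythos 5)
Your proposal is correct and takes essentially the same route as the paper's own proof: both reduce via Lemma \ref{lem: flat}$(i)$ to subdirectly irreducible flat members, use (\ref{i1})--(\ref{i3}) together with $0$-cancellativity and the unique $0$-minimal ideal to show the primes pair off into partners whose product is the single nonzero composite element, and then realise such an algebra in $\mathsf{V}(S_c(ab))$ as a Rees quotient (collapsing the ideal of tuples with a zero coordinate) of the subsemiring of a power of $S_c(ab)$ generated by $\{a,b\}$-tuples. The only differences are cosmetic: the paper generates by all $2^m$ such tuples and embeds $S$ into the quotient, whereas you generate by $2k$ tailored tuples to hit $S_k$ exactly (your side remark that $S_k\notin\mathsf{Q}(S_c(ab))$ for $k\geq 2$ is correct but not needed); and, like the paper, you should restrict attention to finite subdirectly irreducibles, which suffices since the identities force local finiteness.
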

\begin{proof}
It is easy to verify that $S_c(ab)$ satisfies the identities (\ref{i1}), (\ref{i2}) and (\ref{i3}) and so does the variety $\mathsf{V}(S_c(ab))$.
In the remainder it is enough to show that every finite nontrivial subdirectly irreducible
member of $\textbf{F}$ that satisfies (\ref{i1}), (\ref{i2}) and (\ref{i3}) is a member of $\mathsf{V}(S_c(ab))$.

Let $S$ be such an algebra. By Lemma \ref{lem: flat} $(i)$ we have that
it is flat and $(S, \cdot)$ is $0$-cancellative and has a unique $0$-minimal ideal $I$.
As the identity (\ref{i3}) holds in~$S$, it follows that $I$ contains exactly one nonzero element.
Assume that $I=\{0, \omega\}$.
If $S$ satisfies the identity $x_1x_2\approx y_1y_2$, then $S=\{0, \omega\}$ and so
it is isomorphic to $S_c(a)$. Otherwise, there exist prime elements $a$ and $b$ of $S$ such that
$ab=\omega$. Also, we have that $S^2=\{0, \omega\}$, i.e., $\omega$ is the only nonzero composite
element of $S$.
Thus the multiplication table of $S$ consists of entries that are either $0$ or $\omega$.
The $0$-cancellative property ensures that no row nor column of this table
contains two appearances of $\omega$.  But as every nonzero element divides $\omega$,
it follows that every row and every column (except for the row and column of $0$ and $\omega$) contain $\omega$
(so therefore exactly one $\omega$).  Since $S$ satisfies (\ref{i1}) and (\ref{i2}),
we have that the multiplication table is symmetric and every element on the diagonal is $0$.

Next, we shall show that $S$ lies in $\mathsf{V}(S_c(ab))$.
Assume that $m$ is a positive integer such that $|S|\leq 2^m+2$.
Let $A$ denote the direct product of $m$ copies of $S_c(ab)$.
Let $T$ denote the subsemiring of $A$ generated by all $m$-tuples
consisting of $a$ or $b$ in each coordinate.
If $I$ denotes the set of all elements of $T$ containing a $0$ coordinate,
then  the Rees congruence $\rho_{_I}$ is a semiring congruence on $T$.
It is easily verified that the quotient algebra $T/ I$ contains a copy of
$S$. This completes the proof.
\end{proof}

Notice that both (\ref{i1}) and (\ref{i2}) hold in $S_c(abc)$. By Lemma \ref{lem: flat} $(i)$ and Proposition~\ref{pro: S_c(ab)} we immediately have the following corollary.
\begin{cor}\label{cor: S_c(ab)}
$\mathsf{V}(S_c(ab))$ is finitely based and is determined within the variety $\mathsf{V}(S_c(abc))$ by the identity $(\ref{i3})$.
\end{cor}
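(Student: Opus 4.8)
The plan is to deduce both assertions directly from Proposition~\ref{pro: S_c(ab)} and Lemma~\ref{lem: flat}$(i)$, treating the corollary as a bookkeeping consequence of the work already done. Throughout I would lean on two elementary facts about $S_c(abc)$: it is a flat semiring, so $\mathsf{V}(S_c(abc))\subseteq\textbf{F}$; and both (\ref{i1}) and (\ref{i2}) hold in it. The second fact I would verify by noting that every nonzero element of $S_c(abc)$ is a square-free subword of $abc$, so squaring any element produces a repeated letter and hence equals $0$; thus $S_c(abc)$ satisfies $x^2\approx 0$, which in turn forces $x^2y\approx 0\approx x^2$, giving (\ref{i2}), while commutativity of the underlying free commutative semigroup gives (\ref{i1}).

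For finite basedness, I would argue that $\mathsf{V}(S_c(ab))$ is axiomatised by any finite equational basis of $\textbf{F}$ (which exists by Lemma~\ref{lem: flat}$(i)$) together with the three identities (\ref{i1}), (\ref{i2}), (\ref{i3}). Indeed, Proposition~\ref{pro: S_c(ab)} states precisely that $\mathsf{V}(S_c(ab))$ is the class of members of $\textbf{F}$ satisfying these three identities, so the union of the two finite sets of identities is a finite basis, and $\mathsf{V}(S_c(ab))$ is finitely based.

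For the second assertion I would first record that $S_c(ab)$ embeds into $S_c(abc)$ as the subsemiring carried by $\{0,a,b,ab\}$ (the products $a\cdot a$ and $b\cdot b$ collapse to $0$ since they are not square-free subwords of $abc$), so that $\mathsf{V}(S_c(ab))\subseteq\mathsf{V}(S_c(abc))$. Now let $W$ denote the subvariety of $\mathsf{V}(S_c(abc))$ determined by (\ref{i3}). Since (\ref{i1}) and (\ref{i2}) hold throughout $\mathsf{V}(S_c(abc))$, a member of $\mathsf{V}(S_c(abc))$ satisfies (\ref{i3}) if and only if it satisfies all three of (\ref{i1}), (\ref{i2}), (\ref{i3}); as such a member also lies in $\textbf{F}$, Proposition~\ref{pro: S_c(ab)} identifies these members as exactly the members of $\mathsf{V}(S_c(ab))$, giving $W\subseteq\mathsf{V}(S_c(ab))$. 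For the reverse inclusion, $\mathsf{V}(S_c(ab))\subseteq\mathsf{V}(S_c(abc))$ by the embedding, and $S_c(ab)$ satisfies (\ref{i3}) by Proposition~\ref{pro: S_c(ab)}, so $\mathsf{V}(S_c(ab))\subseteq W$. Hence $W=\mathsf{V}(S_c(ab))$, which is the claim.

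The substantive content is entirely front-loaded into Proposition~\ref{pro: S_c(ab)}, so I do not anticipate any genuine obstacle. The only points requiring a line of justification are the embedding $S_c(ab)\hookrightarrow S_c(abc)$ and the validity of (\ref{i1}) and (\ref{i2}) in $S_c(abc)$, both of which reduce to routine observations about square-free words in $a,b,c$.
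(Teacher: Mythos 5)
Your proof is correct and takes essentially the same route as the paper, which derives the corollary immediately from Lemma~\ref{lem: flat}$(i)$ and Proposition~\ref{pro: S_c(ab)} after noting that both (\ref{i1}) and (\ref{i2}) hold in $S_c(abc)$. You merely make explicit the routine details the paper leaves unstated: the embedding of $S_c(ab)$ into $S_c(abc)$ on $\{0,a,b,ab\}$, and the assembly of a finite basis from a finite basis of $\textbf{F}$ together with (\ref{i1}), (\ref{i2}), (\ref{i3}).
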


The following result classifies the subvarieties of $\mathsf{V}(S_c(abc))$.
\begin{pro}\label{pro: S_c(abc)}
$\mathsf{V}(S_c(abc))$ has exactly four subvarieties: $\mathsf{V}(S_c(abc))$,
$\mathsf{V}(S_c(ab))$, $\mathsf{V}(S_c(a))$ and the trivial variety.
\end{pro}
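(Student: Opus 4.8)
The four varieties form a chain
\[
\text{trivial}\ \subsetneq\ \mathsf{V}(S_c(a))\ \subsetneq\ \mathsf{V}(S_c(ab))\ \subsetneq\ \mathsf{V}(S_c(abc)),
\]
the inclusions coming from the subsemiring embeddings $\{0,ab\}\cong S_c(a)$ into $S_c(ab)$ and $\{0,a,b,ab\}\cong S_c(ab)$ into $S_c(abc)$, and being strict since $S_c(ab)$ fails \eqref{i4} whereas $S_c(abc)$ fails \eqref{i3}. The plan is to show that an arbitrary subvariety $W$ of $\mathsf{V}(S_c(abc))$ equals one of these, organising the argument according to whether $W$ satisfies \eqref{i4} and whether it satisfies \eqref{i3}. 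Two facts will be used throughout. First, every subdirectly irreducible $S\in\mathsf{V}(S_c(abc))$ is flat by Lemma~\ref{lem: flat}(i); since $S_c(abc)$ is commutative and satisfies $x^2\approx y^2$ (squaring any subword of $abc$ repeats a letter, hence gives $0$), such an $S$ is commutative and has $a^2=0$ for every $a\in S$ (the common value of a square is $0^2=0$), so any product in $S$ with a repeated factor is $0$. Second, every variety is generated by its subdirectly irreducible members, so if $W$ fails an identity then some subdirectly irreducible member of $W$ fails it as well.

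If $W$ satisfies \eqref{i4}, then $W\subseteq\mathsf{V}(S_c(a))$ by Lemma~\ref{lem: flat}(iii); as $\mathsf{V}(S_c(a))$ is a minimal nontrivial subvariety, $W$ is either trivial or equal to $\mathsf{V}(S_c(a))$. (Note that satisfying \eqref{i4} forces \eqref{i3}, since then every two-fold product is a fixed element $t$ and $tx$, being again a two-fold product, equals $t$; so the case ``\eqref{i4} holds but \eqref{i3} fails'' does not arise.)

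Suppose next that $W$ fails \eqref{i4}, and choose a subdirectly irreducible $S\in W$ failing it, so $xy\neq 0$ for some $x,y\in S$. I would verify that $0,x,y,xy$ are pairwise distinct (for instance $xy=x$ yields $0=xy^2=xy$ after multiplying by $y$) and that $\{0,x,y,xy\}$ is a subsemiring: it is closed under multiplication because every further product repeats a factor and so is $0$, and closed under addition because $S$ is flat. The resulting subsemiring is isomorphic to $S_c(ab)$, whence $\mathsf{V}(S_c(ab))\subseteq W$. If in addition $W$ satisfies \eqref{i3}, then $W\subseteq\mathsf{V}(S_c(ab))$ by Corollary~\ref{cor: S_c(ab)}, and the two inclusions give $W=\mathsf{V}(S_c(ab))$.

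Finally, if $W$ fails \eqref{i3}, I pick a subdirectly irreducible $S\in W$ with $xyz\neq 0$ for some $x,y,z\in S$, and run the same bookkeeping on three generators: the elements $0,x,y,z,xy,xz,yz,xyz$ should be eight distinct elements (distinctness again following from $a^2=0$, e.g.\ $xy=xz$ gives $0=xy^2=xyz$), and their set is closed under both operations, since every product of two of them either repeats a letter and is $0$ or is one of the listed elements. Thus $\{0,x,y,z,xy,xz,yz,xyz\}$ is a subsemiring isomorphic to $S_c(abc)$, giving $\mathsf{V}(S_c(abc))\subseteq W$ and hence $W=\mathsf{V}(S_c(abc))$. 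The main obstacle is precisely this distinctness-and-closure verification inside an abstract subdirectly irreducible (hence flat) member: one must confirm that the relation $a^2=0$ together with the $0$-cancellativity of the flat semiring $S$ forces the candidate elements to be pairwise distinct and absorbs all further products into $0$. Once this is carried out for the four-element copy of $S_c(ab)$ and the eight-element copy of $S_c(abc)$, the cases above are exhaustive and the proposition follows.
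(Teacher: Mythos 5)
Your proof is correct and follows essentially the same route as the paper's: the same case analysis on whether the subvariety satisfies \eqref{i4} and \eqref{i3}, with Lemma~\ref{lem: flat} and Corollary~\ref{cor: S_c(ab)} supplying the upper bounds and a copy of $S_c(ab)$ or $S_c(abc)$ located inside a subdirectly irreducible (hence flat) member supplying the lower bounds. The only difference is that you spell out the distinctness-and-closure verifications (via commutativity and $a^2=0$) that the paper compresses into its generated-subsemiring claim and the remark that the third case is ``similar to the preceding case''.
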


\begin{proof}
Suppose that $\mathcal{V}$ is an arbitrary nontrivial subvariety of $\mathsf{V}(S_c(abc))$.
Since the identity $x_1x_2x_3x_4\approx y_1y_2y_3y_4$ holds in $S_c(abc)$, we need only
consider the following cases:
\begin{itemize}
\item $\mathcal{V}\models (\ref{i4})$. By Lemma \ref{lem: flat} $(iii)$ we have that $\mathcal{V}=\mathsf{V}(S_c(a))$.

\item $\mathcal{V}\models (\ref{i3})$, $\mathcal{V}\not\models (\ref{i4})$.
Then by Lemma \ref{lem: flat} $(i)$ there exists a finite subdirectly irreducible flat semiring $S$ in $\mathcal{V}$ failing~$(\ref{i4})$. This implies that there exist distinct elements $a_1$ and $a_2$ in $S$ such that
$a_1a_2\neq 0$. Furthermore, the subsemiring of $S$ generated by $\{a_1, a_2\}$ is $\{0, a_1a_2, a_1, a_2\}$
and so it is isomorphic to $S_c(ab)$. Thus $\mathsf{V}(S_c(ab))$ is a subvariety of $\mathcal{V}$.
Since the identity~$(\ref{i3})$ holds in $\mathcal{V}$, we have by Corollary \ref{cor: S_c(ab)} that
$\mathcal{V}$ is a subvariety of $\mathsf{V}(S_c(ab))$. Hence $\mathcal{V}=\mathsf{V}(S_c(ab))$.

\item $\mathcal{V}\not\models (\ref{i3})$. Then $\mathcal{V}=\mathsf{V}(S_c(abc))$. Its proof is similar to that of the preceding case.
\end{itemize}
This completes the proof.
\end{proof}

As a consequence, we have the following corollary.
\begin{cor}
The lattice of subvarieties of $\mathsf{V}(S_c(abc))$ is a $4$-element chain.
\end{cor}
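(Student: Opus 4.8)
The plan is to invoke Proposition~\ref{pro: S_c(abc)}, which already lists the four subvarieties of $\mathsf{V}(S_c(abc))$, and then to verify that these four are totally ordered by inclusion with every inclusion strict; a four-element lattice whose elements form a total order is by definition the four-element chain, so this is all that is needed.

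First I would record the chain of containments
\[
(\text{trivial})\subseteq \mathsf{V}(S_c(a))\subseteq \mathsf{V}(S_c(ab))\subseteq \mathsf{V}(S_c(abc)).
\]
The leftmost containment is automatic. For the other two, observe that $\{0,a\}$ is a subsemiring of $S_c(ab)$ isomorphic to $S_c(a)$, and that $\{0,a,b,ab\}$ is a subsemiring of $S_c(abc)$ isomorphic to $S_c(ab)$; since a variety is closed under subsemirings, each generator lies in the variety generated by the next one, which yields the two middle containments.

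Next I would check that the four varieties are pairwise distinct, equivalently that each inclusion above is strict. The first is strict because $\mathsf{V}(S_c(a))$ is nontrivial. For the second, the identity $(\ref{i4})$ separates the two varieties: in $S_c(a)$ every product equals $0$, so $(\ref{i4})$ holds, whereas in $S_c(ab)$ one has $a\cdot a=0$ but $a\cdot b\neq 0$, so $(\ref{i4})$ fails; hence $\mathsf{V}(S_c(a))\neq\mathsf{V}(S_c(ab))$. For the third, the identity $(\ref{i3})$ does the separating: the multiplicative reduct of $S_c(ab)$ is $3$-nilpotent, so $(\ref{i3})$ holds there, while in $S_c(abc)$ we have $a^3=0$ but $abc\neq 0$, so $(\ref{i3})$ fails; hence $\mathsf{V}(S_c(ab))\neq\mathsf{V}(S_c(abc))$.

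With the four subvarieties distinct and linearly ordered, the subvariety lattice is the four-element chain. I expect no genuine obstacle here: all the substance is carried by Proposition~\ref{pro: S_c(abc)}, and the remaining work is the bookkeeping above, the only mildly delicate point being the choice of the separating identities $(\ref{i4})$ and $(\ref{i3})$ to witness strictness of the two nontrivial inclusions.
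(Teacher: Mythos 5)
Your proposal is correct and matches the paper's approach: the paper derives the corollary as an immediate consequence of Proposition~\ref{pro: S_c(abc)}, and your verification of the containments (via the subsemirings $\{0,a\}\subseteq S_c(ab)$ and $\{0,a,b,ab\}\subseteq S_c(abc)$) and of strictness (via the identities (\ref{i4}) and (\ref{i3})) is exactly the routine bookkeeping the paper leaves implicit. All the separating computations you give are accurate, so nothing is missing.
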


By Lemma \ref{lem: flat} $(ii)$ and $(iii)$, Corollary \ref{cor: S_c(ab)} and Proposition \ref{pro: S_c(abc)} we can prove the main theorem of this section:
\begin{thm}\label{thm2}
$\mathsf{V}(S_c(abc))$ is a limit variety.
\end{thm}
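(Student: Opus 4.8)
The plan is to verify the two defining conditions of a limit variety directly, since all the substantive work has already been carried out in the preceding results of this section. Recall that a variety is limit precisely when it is nonfinitely based while every one of its proper subvarieties is finitely based. Thus the proof amounts to checking nonfinite basis for $\mathsf{V}(S_c(abc))$ itself, and finite basis for each proper subvariety, using the complete list of subvarieties supplied by Proposition \ref{pro: S_c(abc)}.

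First I would record that $\mathsf{V}(S_c(abc))$ is nonfinitely based: this is exactly the content of Lemma \ref{lem: flat}$(ii)$, so nothing further is needed here. Next I would invoke Proposition \ref{pro: S_c(abc)}, which asserts that the only subvarieties of $\mathsf{V}(S_c(abc))$ are $\mathsf{V}(S_c(abc))$, $\mathsf{V}(S_c(ab))$, $\mathsf{V}(S_c(a))$ and the trivial variety. Consequently the proper subvarieties are precisely $\mathsf{V}(S_c(ab))$, $\mathsf{V}(S_c(a))$ and the trivial variety, and it suffices to confirm that each of these three is finitely based.

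The finite-basis verification for the three proper subvarieties is then immediate from earlier results: $\mathsf{V}(S_c(ab))$ is finitely based by Corollary \ref{cor: S_c(ab)}, $\mathsf{V}(S_c(a))$ is finitely based by Lemma \ref{lem: flat}$(iii)$, and the trivial variety is finitely based since it is axiomatised by the single identity $x\approx y$. Assembling these observations, $\mathsf{V}(S_c(abc))$ is a nonfinitely based variety all of whose proper subvarieties are finitely based, hence a limit variety.

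I do not expect a genuine obstacle at this stage, because the two conceptually hard ingredients have already been dispatched upstream: the nonfinite basis of $\mathsf{V}(S_c(abc))$ in Lemma \ref{lem: flat}$(ii)$ (which rests on \cite[Theorem 4.9]{jrz}) and, more delicately, the exhaustive classification of subvarieties in Proposition \ref{pro: S_c(abc)} (which uses the flat-semiring structure theory of Lemma \ref{lem: flat}$(i)$ together with Corollary \ref{cor: S_c(ab)}). If any difficulty remains it is purely bookkeeping, namely ensuring that the subvariety list of Proposition \ref{pro: S_c(abc)} is indeed complete so that no finitely-based proper subvariety has been overlooked and, equally, no further nonfinitely based proper subvariety exists that would violate minimality; but this completeness is precisely what Proposition \ref{pro: S_c(abc)} guarantees.
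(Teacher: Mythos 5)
Your proof is correct and follows exactly the paper's own route: the paper derives Theorem~\ref{thm2} directly from Lemma~\ref{lem: flat}$(ii)$ and $(iii)$, Corollary~\ref{cor: S_c(ab)}, and the subvariety classification in Proposition~\ref{pro: S_c(abc)}, just as you do. Your only addition is spelling out the trivial bookkeeping (e.g.\ that the trivial variety is axiomatised by $x\approx y$), which the paper leaves implicit.
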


Notice that $S_c(abc)$ satisfies the identity (\ref{i2}).
It follows that $\mathsf{V}(S_c(abc))$ contains no flat extensions of groups.
On the other hand,
from \cite[Lemma 2.10]{jrz} we know that $S_c(abc)$ lies in the variety of $M(a)$ (also known as $S_7$).
By Theorem \ref{thm2} we immediately deduce that $\mathsf{V}(S_c(abc))$ is a limit subvariety of ${\mathsf{V}}(M(a))$. In fact, one can show that it is the unique limit subvariety of ${\mathsf{V}}(M(a))$.
The details will appear in another paper whose aim is to describe subvarieties of ${\mathsf{V}}(M(a))$.

\section{Other limit varieties}
In this section we prove the existence of other limit varieties in the combinatorial (trivial subgroup) case.
We consider semirings of the form $S({\bf w})$ where ${\bf w}$ is a word.  In the case that ${\bf w}$ is square free, the semiring $S({\bf w})$ satisfies the anticommutative condition
\[xy+yx\approx xy+yx+z\approx (xy+yx)z\approx z(xy+yx).
\]
This law fails in any variety containing $S_c(ab)$, so any nonfinitely based anticommutative variety in which all subgroups are trivial must contain a limit subvariety other than those of Section \ref{sec:groups} and Theorem \ref{thm2}.  We present several ways to find such semirings.

For any word ${\bf v}\in X^+$ we say that a word ${\bf w}\in X^+$ is \emph{${\bf v}$-free} if all of its subwords lie outside of the orbit of ${\bf v}$ under endomorphisms of $X^+$.  If ${\bf w}$ is ${\bf v}$-free, then $S({\bf w})\models {\bf v}\approx {\bf v}+z\approx z{\bf v}\approx {\bf v}z$, which we refer to as the \emph{${\bf v}$-free laws}.  In the flat semiring case, $k$-nilpotence coincides with $x_1\dots x_k$-free law.

Section \ref{sec:groups} shows that there are continuum many limit varieties of ai-semirings, infinitely many of which are finitely generated.  All of these examples have a group-theoretic nature, while Theorem \ref{thm2} gives an explicit example of a combinatorial limit variety.  In this section we demonstrate the existing of at least two further examples of limit varieties, one of which cannot be generated by any flat semiring.
\subsection{Limit varieties via the max-plus algebra}
One of the most important examples of an ai-semiring is the max-plus algebra on the non-negative integers $\omega$, where addition of natural numbers (denoted $+$) distributes over the $\max$ operation (denoted $\vee$).  Aceto, \'Esik and Ing\'olfsd\'ottir \cite{aei} show that the max-plus algebra is not finitely based.  The argument presented in \cite{aei} is in the signature including $0$ as a constant, however the proof holds in the signature $\{+,\vee\}$ also, as we now observe.  For each $n$ they construct an equation $e_n$ and a semiring $S_n$ such that $e_n$ is satisfied by the max-plus algebra and not by $S_n$, and such that all $n$-generated subalgebras of~$S_n$ lie in the variety generated by the max-plus algebra.  These properties continue to hold in the signature $\{+,\vee\}$ because the identities~$e_n$ do not involve the constant~$0$.

\begin{thm}\label{thm:maxplus}
There is a limit variety contained in the variety of the max-plus algebra, and it contains only finitely based flat semirings.
\end{thm}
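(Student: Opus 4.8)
The plan is to combine the nonfinite basis of the max-plus algebra with a tight structural analysis of the flat semirings that can occur in its variety. Write $\M$ for the max-plus algebra and $\mathsf V(\M)$ for its variety. Since $\mathsf V(\M)$ is nonfinitely based (in the signature $\{+,\vee\}$, as observed above), Zorn's Lemma guarantees that it contains a limit subvariety $\mathsf L$; this settles the existence part. The whole content of the theorem is then the second assertion, which I would establish in the stronger form that \emph{every} flat semiring lying in $\mathsf V(\M)$ is finitely based — a fortiori this then holds for those in $\mathsf L$.

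The key observation is that $\M$ satisfies the two identities $xy\approx yx$ and $xy+x\approx xy$; the latter records the positivity $x+y\ge x$ of the underlying addition on $\omega$ (reading juxtaposition as ordinary $+$ and $+$ as $\vee$). I would first feed these into an arbitrary flat semiring $S\in\mathsf V(\M)$. In $S$ the additive operation is flat, so for distinct nonzero $a,b$ the sum $a+b$ is the top element $0$. Applying $xy+x\approx xy$ then shows that a nonzero product $ab$ must equal $a$, and the commutative instance $ab+b\approx ab$ forces $ab=b$ as well. Hence $ab\neq 0$ implies $a=b$ with $a^2=a$: all mixed products vanish and every nonzero element is either multiplicatively idempotent or squares to $0$. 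Consequently each nonzero element $a$ generates the two-element multiplicative ideal $\{a,0\}$, so by Lemma \ref{lem: flat}(i) a subdirectly irreducible flat semiring in $\mathsf V(\M)$ has a single nonzero element and is therefore isomorphic to $\flat(1)$, the flat extension of the trivial group (the idempotent case), or to $S_c(a)$ (the nilpotent case).

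It follows by Birkhoff's subdirect representation that every flat semiring in $\mathsf V(\M)$ is a subdirect product of copies of $\flat(1)$ and $S_c(a)$, hence generates one of exactly four varieties: the trivial variety, $\mathsf V(\flat(1))$, $\mathsf V(S_c(a))$, or $\mathsf W:=\mathsf V(\flat(1),S_c(a))$. The first three are finitely based by, respectively, triviality, Lemma \ref{lem1} (the trivial group has abelian Sylow subgroups), and Lemma \ref{lem: flat}(iii). The step I expect to require the most care is the finite basis of the join $\mathsf W$, since joins of finitely based varieties need not be finitely based in general. Here I would argue relative to the finitely based variety $\textbf{F}$ of Lemma \ref{lem: flat}(i): I claim that adjoining the two identities $xy\approx yx$ and $xy+x\approx xy$ to a finite basis of $\textbf{F}$ already axiomatises $\mathsf W$.

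To verify this last claim, note that every subdirectly irreducible member of the resulting variety is a subdirectly irreducible member of $\textbf{F}$, hence flat with a unique $0$-minimal ideal by Lemma \ref{lem: flat}(i); the argument of the second paragraph then forces it to have a single nonzero element, so it is $\flat(1)$ or $S_c(a)$. Since a variety is determined by its subdirectly irreducible members, the axiomatised variety coincides with $\mathsf W$, which is therefore finitely based by a finite set of identities. Combining the four cases shows that every flat semiring in $\mathsf V(\M)$ — and in particular every flat semiring in the limit variety $\mathsf L$ — is finitely based, completing the proof.
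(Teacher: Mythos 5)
Your proposal is correct, and its core is the same as the paper's: both hinge on the observation that the max-plus algebra satisfies $xy+x\approx xy\approx yx$ (the translation of $(x+y)\vee x\approx x+y$), and both use flatness to collapse this law --- your computation $ab+a=ab=ba=ba+b$, forcing $a=ab=b$ whenever $ab\neq 0$, is exactly the paper's computation, which the paper applies only at an element $\omega$ of the $0$-minimal ideal while you apply it globally to conclude that every nonzero element generates a two-element multiplicative ideal. In both treatments the subdirectly irreducible flat semirings satisfying the law are identified as the two-element semirings $S_c(a)$ and $\flat(1)$ (the paper writes $S(a)$ and $M(1)$ for these). The proofs diverge only at the finish: the paper simply cites \cite{sr} for the fact that $\mathsf{V}(S(a),M(1))$ is \emph{hereditarily} finitely based, which in one stroke covers every flat semiring in the max-plus variety, whereas you re-derive what is needed by enumerating the four possible varieties generated by subdirect products of $\flat(1)$ and $S_c(a)$ and proving the finite basis of the join directly, via a relative axiomatisation over the finitely based variety $\textbf{F}$ of Lemma \ref{lem: flat}$(i)$ (adjoining $xy\approx yx$ and $xy+x\approx xy$ and checking that the resulting subdirectly irreducibles are exactly $\flat(1)$ and $S_c(a)$). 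Your route is slightly longer but more self-contained --- it handles the well-known pitfall that joins of finitely based varieties need not be finitely based, and it implicitly reproves the hereditary finite basis property of the join rather than importing it --- at the cost of one small point worth making explicit: the subdirect-representation step uses that quotients of a flat semiring are again flat, which follows from the congruence--ideal correspondence in Lemma \ref{lem: flat}$(i)$ (or, as in your final paragraph, by working with subdirectly irreducibles inside $\textbf{F}$ from the start, which is the cleaner formulation).
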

\begin{proof}
Every nonfinitely based variety contains a limit subvariety.  For the second claim, observe that the max-plus algebra satisfies $(x+y)\vee x\approx (x+y)$.  Translated to the default $+,\cdot$ notation this becomes $xy+x\approx xy\approx yx$.  A flat semiring satisfying this law is finitely based. To see this, consider a subdirectly irreducible flat semiring~$S$ in the variety defined by $xy+x\approx xy\approx yx$, with top element $0$ and let $\omega$ be a nonzero element in the smallest $0$-minimal multiplicative ideal of $S$.  Applying $xy+x\approx xy$ with $x=y=\omega$ we have that $\omega\leq \omega^2$, and as $S$ is flat we have either $\omega^2=0$ or $\omega^2=\omega$, so that $\{0, \omega\}$ is a subuniverse of $S$.  If
$S=\{0, \omega\}$ then it is isomorphic to either $S(a)$ or $M(1)$, according to whether $\omega\omega=0$ or $\omega\omega=\omega$.  So assume that there are $a, b\neq 0$ such that $ab=\omega$.  Then applying $xy+x\approx xy$ and $xy\approx yx$, we obtain $ab+a=ab=ba=ba+b$.  As $S$ is flat and $ab=\omega$, this implies that $a=\omega=b$ so that we have $M(1)$ again. From~\cite{sr} we know that the variety generated by $S(a)$ and $M(1)$ is hereditarily finitely based. This completes the proof.
\end{proof}
%
%
%

\subsection{Limit varieties via Lee words}
There are many semigroup varieties that are known to have subvariety lattices of cardinality $2^{\aleph_0}$, and this has usually been proved by way of word patterns yielding independent systems of equations; see Jackson \cite{jac00}.  As there are only countably many finitely based varieties in any finite signature, this yields nonfinitely based subvarieties (and hence the existence of limit varieties).  We now show that such patterns can be adapted to the semiring setting, yielding anticommutative combinatorial varieties without a finite basis, and hence limit varieties not covered by Section \ref{sec:groups} or Theorem \ref{thm2}.

One such pattern is
\[
\bell_n:=x_1x_2x_1x_3x_2x_4x_3\dots x_{n}x_{n-1}x_n=x_1\cdot \Big(\prod_{1\leq i\leq n-1} (x_{i+1}x_i)\Big)\cdot x_n.
\]
In the literature, this chain pattern has usually included some letters between the first occurrence of $x_1$ and $x_2$, and between the final occurrences of $x_{n-1}$ and $x_n$, but the essential ingredient is the pattern displayed.  The pattern first appeared in the work of Lee and co-authors \cite{Lee14a,Lee14b,LZ14}, and some researchers refer to them as \emph{Lee words}.  They have been used extensively for independent patterns in Jackson~\cite{jac:irr} and Jackson and Lee \cite{jaclee} in the context of monoids, though for our purposes we will not require an identity element and are working in the context of semirings.  The theme of previous use is that for $n\neq m$ the word $\bell_n$ is $\bell_m$-free.  To see this, observe that each  subword of $\bell_n$ of length more than $1$ occurs just once in $\bell_n$, hence any endomorphism that takes $\bell_m$ to a subword of $\bell_n$ must send individual letters to individual letters; but it is clear that this is only possible if $n=m$.  This observation establishes the following lemma.
\begin{lem}\label{lem:leen}
$S(\bell_n)$ satisfies the $\bell_m$-free laws if and only if $n\neq m$.
\end{lem}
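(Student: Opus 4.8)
The plan is to treat the two directions of the biconditional separately, with the forward direction ($n\neq m$) being essentially an appeal to the preceding observation and the reverse direction ($n=m$) requiring a short explicit computation inside the flat semiring $S(\bell_n)$.

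For the implication that $n\neq m$ forces $S(\bell_n)$ to satisfy the $\bell_m$-free laws, I would cite directly the observation recorded just before the lemma: for $n\neq m$ every factor of $\bell_n$ of length greater than $1$ occurs exactly once, so any endomorphism of $X^+$ carrying $\bell_m$ into a factor of $\bell_n$ must act letter-to-letter, and such a letter-to-letter map can only identify $\bell_m$ with a factor of $\bell_n$ when $n=m$. Hence for $n\neq m$ the word $\bell_n$ is $\bell_m$-free, and the stated property of the ${\bf v}$-free laws then yields $S(\bell_n)\models \bell_m\approx \bell_m+z\approx z\bell_m\approx \bell_m z$ at once. The mechanism underlying that property is that in $S(\bell_n)$ the term $\bell_m$ evaluates to $0$ under every assignment, since any nonzero value of a product of factors must itself be a factor of $\bell_n$ lying in the endomorphic orbit of $\bell_m$; once $\bell_m$ evaluates to the absorbing top element $0$, all four expressions collapse to $0$ and the laws hold trivially.

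For the converse I would argue the contrapositive: if $n=m$ then $S(\bell_n)$ fails the $\bell_n$-free laws. Take the assignment sending each variable $x_i$ to the single-letter factor $x_i$ of $\bell_n$ (each $x_i$ does occur in $\bell_n$, so these are genuine nonzero elements). Evaluating the term $\bell_n$ from left to right, every partial product is a prefix of $\bell_n$ and hence a genuine factor, so no premature collapse to $0$ occurs and the term evaluates to the whole word $\bell_n$, a nonzero element of $S(\bell_n)$. Now interpret the fresh variable $z$ as any element distinct from $\bell_n$, for instance the letter $x_1$; since $\bell_n$ and $x_1$ are distinct nonzero elements of a flat semiring, their sum is the top element $0$. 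Thus the law $\bell_n\approx \bell_n+z$ reads $\bell_n=0$ under this assignment, which is false, so $S(\bell_n)$ does not satisfy the $\bell_n$-free laws.

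The argument is short and presents no real obstacle; the one point meriting care is that the product defining $\bell_n$ does not evaluate to $0$ in $S(\bell_n)$, which is precisely the observation that every prefix of $\bell_n$ is a factor of $\bell_n$. I would emphasise that the two directions are not symmetric: the stated property of the ${\bf v}$-free laws is only a one-way implication, so the reverse direction genuinely needs the explicit witnessing assignment above rather than merely the failure of $\bell_n$-freeness of $\bell_n$ itself.
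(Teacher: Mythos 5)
Your proof is correct and follows essentially the same route as the paper: the forward direction is exactly the paper's preceding observation that every subword of $\bell_n$ of length greater than $1$ occurs only once, so any endomorphism carrying $\bell_m$ into a subword of $\bell_n$ acts letter-to-letter and forces $n=m$, after which the $\bell_m$-free laws hold because the term $\bell_m$ evaluates to $0$ throughout $S(\bell_n)$. Your explicit witness for the $n=m$ direction (the identity assignment making $\bell_n$ evaluate to a nonzero element, then summing with a distinct $z$ to reach the top element) is just the easy step the paper leaves implicit, and your remark that the ${\bf v}$-free property is only a one-way implication, so this step is genuinely needed, is accurate.
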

\begin{thm}\label{thm:anotherlimit}
The equation system $\Sigma$ that consists of
\begin{itemize}
\item the $\bell_n$-free laws \up(for all $n$\up)\up,
\item the $x^2$-free laws\up,
\item the laws defining anticommutative ai-semirings
\end{itemize}
is not finitely based.
\end{thm}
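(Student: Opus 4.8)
The plan is to argue by contradiction using the standard compactness technique for independent equation systems. Write $\mathcal{V}:=\Mod(\Sigma)$ and suppose, toward a contradiction, that $\mathcal{V}$ is finitely based, say $\mathcal{V}=\Mod(\Delta)$ for some finite $\Delta$. Each identity of $\Delta$ is a consequence of $\Sigma$, so by the compactness of equational logic each follows from a finite subset of $\Sigma$; taking the union over the finitely many members of $\Delta$ produces a finite $\Sigma_0\subseteq\Sigma$ with $\Sigma_0\models\Delta\models\Sigma$, so that $\Sigma_0$ already axiomatises $\mathcal{V}$.

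Next I would exhibit, for a suitable $N$, a semiring that separates $\Sigma_0$ from $\Sigma$. Since $\Sigma_0$ is finite it contains only finitely many of the $\bell_n$-free laws, so I can pick $N\ge 3$ whose $\bell_N$-free law does not appear in $\Sigma_0$. I claim that $S(\bell_N)$ satisfies every member of $\Sigma$ except the $\bell_N$-free law. Indeed, Lemma \ref{lem:leen} gives that $S(\bell_N)$ satisfies the $\bell_m$-free laws for all $m\ne N$ and fails the $\bell_N$-free law; and for $N\ge 3$ the word $\bell_N$ is square-free (no two consecutive letters agree, and every subword of length at least $2$ occurs exactly once, so no subword is a nonempty square), whence $S(\bell_N)$ satisfies the $x^2$-free laws and, by the earlier observation that a square-free $\mathbf{w}$ makes $S(\mathbf{w})$ anticommutative, also satisfies the laws defining anticommutative ai-semirings.

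Finally I would close the argument: since $\Sigma_0\subseteq\Sigma\setminus\{\bell_N\text{-free law}\}$ and $S(\bell_N)$ satisfies all of $\Sigma\setminus\{\bell_N\text{-free law}\}$, we get $S(\bell_N)\in\Mod(\Sigma_0)=\mathcal{V}$; but $S(\bell_N)$ fails the $\bell_N$-free law, which belongs to $\Sigma$, so $S(\bell_N)\notin\Mod(\Sigma)=\mathcal{V}$, a contradiction. Hence $\Sigma$ is not finitely based.

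The routine ingredients here are the compactness reduction and Lemma \ref{lem:leen}; the one point that needs care — and which I regard as the main obstacle — is verifying that $\bell_N$ is genuinely square-free for all $N\ge 3$. Note that $\bell_2=(x_1x_2)^2$ is itself a square, so the $\bell_2$-free law is subsumed by the $x^2$-free law and only the cofinitely many $N\ge 3$ serve as separating witnesses. The square-freeness for $N\ge 3$ follows from the single-occurrence property of subwords of length at least $2$, which simultaneously rules out squares $ww$ with $|w|\ge 2$ and, together with the absence of adjacent repeated letters, rules out squares of the form $yy$; this is precisely the property already underlying Lemma \ref{lem:leen}, so no combinatorial work beyond that lemma is required.
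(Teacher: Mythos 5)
Your proof is correct and follows essentially the same route as the paper's: a compactness reduction to a finite subset of $\Sigma$, with $S(\bell_N)$ (for an $N$ whose $\bell_N$-free law is omitted from that subset) serving as the separating witness via Lemma \ref{lem:leen} together with the square-freeness/anticommutativity observation. Your added care in noting that $\bell_2=(x_1x_2)^2$ is itself a square, so that only witnesses with $N\ge 3$ work, is a small refinement the paper glosses over but does not change the argument.
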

\begin{proof}
By the compactness of equational logic, it suffices to show that no finite subset of $\Sigma$ is equivalent to the whole system.  For any finite subset $\Sigma'$ of $\Sigma$ there is $n$ such that $\bell_n$-free law is not included in $\Sigma'$.  Then $S(\bell_n)$ satisfies $\Sigma'$ because it is an anticommutative  flat semiring satisfying the $x^2$-free laws and all of the $\bell_m$-free laws in $\Sigma'$ (where $m$ is necessarily not $n$) by Lemma \ref{lem:leen}. However, $S(\bell_n)$ fails the $\bell_n$-free law that is in $\Sigma$, so that $\Sigma'$ is not a basis for the variety defined by~$\Sigma$.
\end{proof}
There are many other such patterns that can be used in a similar way.  The pattern
\[
\bs_n=y_0x_1x_2x_3x_4x_5y_0\Big(\prod_{1\leq i\leq n}(y_ix_{i+5}y_i)\Big)y_{n+1}x_{n+6}x_{n+7}x_{n+8}x_{n+9}x_{n+10}y_{n+1}
\]
for example was introduced by Sapir and Volkov \cite{sapvol} and forms the basis of many examples in \cite{jac00}; a version of Theorem \ref{thm:anotherlimit} holds with minimal change to statement and proof.  Another obvious system of patterns is
\[
\bp_n=x_0^3\big(\prod_{1\leq i\leq n}y_i^2\big)x_1^3.
\]
Again a version of Theorem \ref{thm:anotherlimit} will hold, though this time we cannot assume the square-free law, nor anticommutativity.  We can instead use satisfaction of the law $xyz+yzx\approx xyz+yzx+xxx$; this is easily verified as holding on $S(\bp_n)$ because the only nonzero evaluations of $xyz+yzx$ are $x=y=z=x_0$ or $x=y=z=x_1$.  The law obviously fails on $S_c(abc)$ (for example, when $x=a,y=b,z=c$), so that $S_c(abc)$ is not in the variety generated by $\{S(\bp_n)\mid n\in\mathbb{N}\}$.  Other examples abound.

In the Section \ref{sec:appendix} we provide an example generator for a nonfinitely based subvariety of the system defined in Theorem \ref{thm:anotherlimit}, as well as some other peripheral consequences of the ideas in this section.

\subsection{Limit varieties in the signature $\{+,\cdot,1\}$}
It follows from \cite[Theorem~6.8]{jrz} that in the signature $\{+,\cdot,1\}$ of semirings with multiplicative identity $1$,  the 3-element semiring with identity $M(a)$ generates a limit variety. In the signature $\{+,\cdot\}$, the semiring variety of $M(a)$ properly contains $S_c(abc)$, so $M(a)$ no longer generates a limit variety there.

\section{A nonfinitely based ai-semiring within the variety described by Theorem \ref{thm:anotherlimit}}\label{sec:appendix}
There is value in providing a nonfinitely based semiring satisfying the laws~$\Sigma$ of Theorem \ref{thm:anotherlimit}.  For this purpose it is convenient to introduce two further word constructions.  The first is similar to an infinite limit of the sequence $(\bell_n)_{n\geq 1}$ which we denote by $\bell_\omega$. In order to make the definition precise it is first convenient to think of the variable indices in $\bell_n$ as integers modulo $n+1$.  In this way, the largest index $n$ in $\bell_n$ can also be denoted $-1$, and we may read the word right-to-left as descending through indices $-1,-2,\dots$ until the start of the word is reached at $-n$ (there is no $x_0$).  Now we may similarly consider an infinite string $\bell_\omega$ defined on the nonzero integers:
\[
x_1x_2x_1x_3x_2x_4\dots x_{-4}x_{-2}x_{-3}x_{-1}x_{-2}x_{-1}
\]
In other words, the ordering of the index set is the usual order on positive integers and on negative integers, except that all negative integers are placed above the positive integers.   We extend the notion of subword to this infinite string by allowing all finite subwords as subwords as well as all infinite substrings, provided they have a start point and an end point: thus they must start at an occurrence of some $x_i$ for positive $i$ and finish at an occurrence of $x_j$ for some negative $j$.

The second word construction is a variation of the words $\bell_n$.  Let $1<i<n-1$ and consider the result $\bk_{n,i}$ of replacing the first occurrence of $x_i$ and $x_{i+1}$ by $y_i$ and $y_{i+1}$ respectively, and the second occurrence by $z_i$ and $z_{i+1}$.
\[
\bk_{n,i}:=x_1x_2x_1x_3x_2x_4x_3\dots x_{i-2}y_ix_{i-1}y_{i+1}z_ix_{i+2}z_{i+1}x_{i+3}\dots x_{n}x_{n-1}x_n
\]

By a nonletter word (or subword) we mean a word of length at least $2$.  Let $\bw$ be any word in which all nonletter subwords appear just once; the words $\bell_n$ and $\bk_{n,i}$ are examples.  Each nonletter subword of $\bw$ can be identified with the position of its first and last letter within the word $\bw$.  So for example in the word $\bell_n$, the subword $x_2x_1$ starts at position $2$ and finishes at position $3$ and so may recorded by the pair $(2,3)$.  Multiplication between nonletter subwords of $\bw$ in $S(\bw)$ is then simply $(i_1,j_1)(i_2,j_2)=(i_1,j_2)$ when $i_2=j_1+1$.  Multiplication between individual letters $a,b$ is similar: $a\cdot b=(i,i+1)$ provided that letter occurs in position $i$ and letter $b$ in position $i+1$ within $\bw$.  Similarly, $a\cdot (i,j)=(i-1,j)$ if $a$ appears in position $i-1$ and $(i,j)\cdot a=(i,j+1)$ if $a$ appears in position $j+1$.  All other products are zero.  We refer to this description of multiplication in $S(\bw)$  as the \emph{bracket representation}.  The bracket representation idea extends to our negative index convention, and then also to the infinite word such as $\bell_\omega$, where the indexing can follow the same convention we are using for variable indices: all negative numbers are larger than all positive numbers (and $0$ is omitted).  Thus the bracket $(3,-2)$ in $\bell_\omega$ corresponds to the string $x_1x_3x_2x_4\dots x_{-3}x_{-1}x_{-2}$.

The bracket representation is useful because it makes clear that the only difference between $S(\bell_n)$ and $S(\bk_{n,i})$ are at the level of products involving letter elements.  This plays a role in the next lemma.

\begin{lem}\label{lem:matching}
If $n\leq m\leq \omega$ the word $\bell_m$ contains an image $\varphi$ of $\bk_{n,i}$ by matching the first $i-1$ generators $x_1,\dots,x_{i-1}$ to themselves, the final $n-i-1$ variables $x_{1+i-n},\dots, x_{-1}$ to the corresponding final $n-i-1$ variables with the same negative index, matching $y_i$ with $x_i$, $z_{i+1}$ with $x_{i-n}$, and any splitting of the central portion of $\bell_m$ into a nonempty prefix and suffix: $\varphi(y_{i+1}z_i)=x_{i+1}x_i\cdot x_{i+2}x_{i+1}\cdots x_{2+i-n}x_{1+i-n}$.
\end{lem}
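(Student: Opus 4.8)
The plan is to prove this by a direct combinatorial verification carried out in the bracket representation of $S(\bell_m)$. Recall that
\[
\bk_{n,i}=\underbrace{x_1(x_2x_1)\cdots(x_{i-1}x_{i-2})}_{\text{prefix}}\cdot(y_ix_{i-1})\cdot(y_{i+1}z_i)\cdot(x_{i+2}z_{i+1})\cdot\underbrace{(x_{i+3}x_{i+2})\cdots(x_nx_{n-1})x_n}_{\text{suffix}},
\]
i.e.\ $\bk_{n,i}$ is obtained from $\bell_n$ by altering only the three consecutive length-two blocks $(x_ix_{i-1})$, $(x_{i+1}x_i)$, $(x_{i+2}x_{i+1})$. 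First I would define $\varphi$ exactly as in the statement, and then show that $\varphi$ carries each of these five factors onto a block of consecutive bracket-pairs of $\bell_m$, with the blocks fitting together end to end so as to reconstitute the whole of $\bell_m$; this exhibits $\varphi(\bk_{n,i})$ as a subword of $\bell_m$ (in fact equal to it).

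The core of the argument is keeping track of these blocks. Under $\varphi$ the prefix maps to itself and $(y_ix_{i-1})\mapsto(x_ix_{i-1})$, so the prefix together with the $y_i$-block produces the initial segment $x_1(x_2x_1)\cdots(x_ix_{i-1})$ of $\bell_m$. At the other end, matching the tail variables $x_{i+2},\dots,x_n$ to the variables of $\bell_m$ carrying the same negative indices, together with $z_{i+1}\mapsto x_{i-n}$, carries $(x_{i+2}z_{i+1})\cdot(\text{suffix})$ onto the terminal segment of $\bell_m$ that runs through the negative indices $i-n,i+1-n,\dots,-1$ and finishes in $x_{-1}$. The factor $(y_{i+1}z_i)$ is then forced to absorb exactly the run of consecutive pairs of $\bell_m$ lying between these two segments, beginning at $(x_{i+1}x_i)$ on the positive side and continuing to the pair immediately preceding the image of $(x_{i+2}z_{i+1})$. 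I would verify at each of the four junctions that the index of the last letter of one block and the index of the first letter of the next are consistent with consecutive positions in $\bell_m$, using the convention that all negative indices lie above all positive ones; this reduces to a short index computation with no letter skipped or repeated.

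The step I expect to be most delicate is this central junction, where the run assigned to $y_{i+1}z_i$ passes from the positive part of the index set into the negative part and must meet up precisely with the image of $(x_{i+2}z_{i+1})$: pinning down the exact endpoint of the central run is the only subtle point of the computation. I would also record that the hypothesis $n\le m$ is precisely what forces the central gap to be nonempty, so that $y_{i+1}z_i$ can be split as $\varphi(y_{i+1})\cdot\varphi(z_i)$ into two nonempty factors; any such split yields the same product, which is the freedom claimed in the statement. Finally the case $m=\omega$ is handled uniformly: the central run is then an infinite substring of $\bell_\omega$ possessing a genuine start point (at $x_{i+1}$) and end point, hence a legitimate element of $S(\bell_\omega)$, while the prefix-and-suffix matching is unchanged, so the same gluing verification goes through verbatim.
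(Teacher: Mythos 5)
Your proposal is correct and takes essentially the same route as the paper: the paper's proof consists precisely of this block-by-block matching (prefix and $(y_ix_{i-1})$ onto the initial segment of $\bell_m$, the block $(x_{i+2}z_{i+1})$ and the tail onto the terminal segment through the negative indices $i-n,\dots,-1$, with $y_{i+1}z_i$ absorbing the intervening run of consecutive pairs), presented as a schematic diagram rather than written-out junction computations, together with the same two edge-case remarks you make --- that $n=m$ leaves only the single split $\varphi(y_{i+1})=x_{i+1}$, $\varphi(z_i)=x_i$, and that for $m=\omega$ exactly one of the prefix and suffix of the central run is infinite, which is legitimate under the start-point/end-point convention for subwords of $\bell_\omega$. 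Your phrasing of the central run as ending at ``the pair immediately preceding the image of $(x_{i+2}z_{i+1})$'' is exactly the right way to pin down the delicate endpoint, so nothing is missing.
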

\begin{proof}
This is a simple matching of patterns.  In the following schematic we display the symbol~$\cdot$ between  blocks to aid clarity of the pattern and let~$p$ denote the negative number $1+i-n$.
\begin{center}
\begin{tikzpicture}
\node at (-8.5,1) {$\bk_{n,i}$:};
\node at (-2.125,1) {$ \cdots x_{i-1}x_{i-2}\cdot y_ix_{i-1}\, \quad\quad {\cdot} \quad\quad\,  y_{i+1}z_i\quad\quad\ {\cdot} \quad\quad\, x_{i+2}z_{i+1}\ \cdot x_{i+3}x_{i+2} \cdots $};
\node at (-8.5,-0.2) {$\bell_m$:};
\node at (-2.05,-0.2) {$ \cdots x_{i-1}x_{i-2}\cdot x_ix_{i-1}\, \cdot \, x_{i+1}x_i\cdot x_{i+2}x_{i+1}\cdots  x_{p}\, \cdot \, x_{p+2}x_{p+1}\cdot x_{p+3}x_{p+2} \cdots $};
\draw (-6.95,0.9) [->] to  (-6.95,.05);
\draw (-6.25,0.9) [->] to  (-6.25,.05);
\draw (-5.35,0.85) [->] to  (-5.35,.05);
\draw (-5.05,0.9) [->] to  (-5.05,.05);
\draw (-2.4,0.65) [->] to  (-2.4,.15);
\draw [decoration={brace}, decorate,thick] (-4.1,0) -- (-0.7,0);
\draw [decoration={brace}, decorate,thick] (-1.91,0.8) -- (-2.885,0.8);

\draw (-.15,0.9) [->] to  (-0.15,.05);
\draw (.55,0.9) [->] to  (0.55,.05);
\draw (1.52,0.9) [->] to  (1.52,.05);
\draw (2.25,0.9) [->] to  (2.25,.05);
\end{tikzpicture}
\end{center}
Note that in the case of $n=m$ then $p\equiv i\mod n+1$ so that $\phi(y_{i+1}z_i)=x_{i+1}x_i$.  In this case there is only one possible split into prefix and suffix, but for $m>n$ there is a choice.  For $m=\omega$ either the prefix or the suffix \up(but not both\up) must be infinite due to our restriction that all infinite subwords of $\bell_\omega$ have a start letter and a finish letter.
%
%
%
\end{proof}
A consequence of Lemma \ref{lem:matching} is the following lemma.
\begin{lem}\label{lem:mversusn}
$S(\bk_{n,i})$ is isomorphic to a subsemiring of $S(\bell_m)$ whenever $m\geq n+2$.
\end{lem}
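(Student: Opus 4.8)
The plan is to upgrade the combinatorial matching of Lemma~\ref{lem:matching} into an explicit semiring embedding. Fix $m\ge n+2$ and let $\varphi$ be the endomorphism of $X^+$ furnished by Lemma~\ref{lem:matching}, choosing the splitting of the central portion so that \emph{both} $\varphi(y_{i+1})$ and $\varphi(z_i)$ are nonletter words; the central portion has length $2(m-n+1)\ge 6$, so this is possible. Since $\varphi(\bk_{n,i})$ is a factor of $\bell_m$, setting $\Phi(0)=0$ and $\Phi(u)=\varphi(u)$ sends every nonzero element (i.e.\ factor) of $S(\bk_{n,i})$ to a nonzero factor of $S(\bell_m)$. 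Because both algebras are flat, to see that $\Phi$ is a semiring embedding it is enough to verify that $\Phi$ is an injective multiplicative homomorphism with $\Phi^{-1}(0)=\{0\}$: additivity is then free, since distinct nonzero elements receive distinct nonzero images and hence every sum $u+v=0$ with $u\ne v$ maps to $\Phi(u)+\Phi(v)=0$.

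I would first dispose of the routine parts through the bracket representation. As $\varphi$ replaces each position of $\bk_{n,i}$ by a nonempty block of positions and these blocks tile $\varphi(\bk_{n,i})$ in order, a factor with position-span $[s,t]$ is carried to the factor of $\bell_m$ spanning the corresponding blocks; distinct factors therefore have distinct spans, and as every nonletter factor of $\bell_m$ occurs just once this forces injectivity (the separation of the prefix indices $1,\dots,i$ from the ``top'' suffix indices, needed to separate the letter images, is guaranteed by $m\ge n+2$). The forward direction of the homomorphism property is automatic: whenever $uv$ is a factor of $\bk_{n,i}$ we have $\Phi(u)\Phi(v)=\varphi(u)\varphi(v)=\varphi(uv)=\Phi(uv)$, because $\varphi$ is a morphism and $\varphi(uv)$ is again a factor.

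The hard part will be zero-preservation: if $uv$ is \emph{not} a factor of $\bk_{n,i}$ I must show that $\varphi(u)\varphi(v)$ is not a factor of $\bell_m$. If either of $\varphi(u),\varphi(v)$ is a nonletter word then, by uniqueness of nonletter factors, its single occurrence pins down the location of any putative product, and one reads off directly that the product is a factor exactly when $u,v$ are adjacent in $\bk_{n,i}$; in particular the choice making $\varphi(y_{i+1})$ and $\varphi(z_i)$ nonletter protects these two letters. This reduces the problem to products $a\cdot b$ of elements whose images are both \emph{letters}. Here the letter images of $\varphi$ occupy exactly the index ranges $[1,i]$ and $[q,m]$, where $q=m-n+i+1$, while the ``gap'' indices $i+1,\dots,q-1$ (the renamed, now nonletter, central zone) are \emph{not} letter images. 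Every letter-to-letter adjacency in $\bell_m$ is inherited from the fixed prefix or the uniformly index-shifted suffix, with exactly two exceptions: the second occurrence of $x_i=\varphi(y_i)$ is followed by $x_{i+2}$, and the first occurrence of $x_q=\varphi(z_{i+1})$ is preceded by $x_{q-2}$. Both $i+2$ and $q-2$ lie in the gap precisely when $m\ge n+2$ (the gap has size $m-n$), so neither $x_{i+2}$ nor $x_{q-2}$ is a letter image, and hence no letter $\varphi(b)$ can complete either adjacency into a spurious nonzero product. This is exactly where the bound is used.

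The representative failure, showing why $m=n+1$ does not suffice, is the pair $y_i,z_{i+1}$, which are far apart in $\bk_{n,i}$ so that $y_i\cdot z_{i+1}=0$. Under $\Phi$ this becomes $x_i\cdot x_{q}$. When $m=n+1$ the gap is the single index $i+1$, so $q=i+2$ and $x_{i+2}=\varphi(z_{i+1})$; since the second occurrence of $x_i$ is immediately followed by $x_{i+2}$, the word $x_ix_{i+2}$ is a factor and $\Phi(y_i)\Phi(z_{i+1})\ne 0$, so $\Phi$ is not a homomorphism. For $m\ge n+2$ we instead have $q\ge i+3$, the second occurrence of $x_i$ is followed by $x_{i+2}\ne x_q$, so $x_ix_q$ is not a factor and the product vanishes as required; a symmetric argument disposes of the occurrence of $x_q$, and all remaining letters inherit their adjacencies. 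With zero-preservation established, $\Phi$ is an injective homomorphism that identifies $S(\bk_{n,i})$ with a subsemiring of $S(\bell_m)$.
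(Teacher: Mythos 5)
Your overall architecture mirrors the paper's proof (reduce additivity to flatness, injectivity via spans/blocks in the bracket representation, forward multiplicativity for free, zero-preservation as the real content), but there is a genuine error at exactly the step you dismiss as something ``one reads off directly'': it is \emph{not} true that every splitting of the central portion into two nonletter words yields an embedding. Concretely, take $n=5$, $i=3$, $m=7=n+2$, so $\bk_{5,3}=x_1x_2x_1y_3x_2y_4z_3x_5z_4x_5$ and the central portion of $\bell_7=x_1x_2x_1x_3x_2x_4x_3x_5x_4x_6x_5x_7x_6x_7$ is $x_4x_3x_5x_4x_6x_5$. The split $\varphi(y_4)=x_4x_3x_5x_4$, $\varphi(z_3)=x_6x_5$ meets your ``both nonletter'' requirement, yet the unique occurrence of $x_4x_3x_5x_4$ is immediately followed by the \emph{first} occurrence of $x_6=\varphi(z_4)$, so $\Phi(y_4)\Phi(z_4)=x_4x_3x_5x_4x_6\neq 0$ while $y_4z_4=0$ in $S(\bk_{5,3})$; dually, the split $\varphi(y_4)=x_4x_3$, $\varphi(z_3)=x_5x_4x_6x_5$ gives the spurious product $\Phi(y_3)\Phi(z_3)\neq 0$ via the second occurrence of $x_3=\varphi(y_3)$. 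In general, writing $L=2(m-n+1)$ for the central length, the splits after positions $2$ and $L-2$ always fail: the central zone contains exactly two stray occurrences of image letters (the second occurrence of $x_i$ at central position $2$ and the first occurrence of $x_q$ at central position $L-1$), and those splits place the internal block boundary directly against one of them. Your own exceptional-adjacency analysis correctly identifies precisely these stray occurrences, but you apply it only to letter--letter products; the same occurrences also contaminate nonletter-times-letter products at the junction, which is the case your ``its single occurrence pins down the location'' argument glosses over.

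The repair is the choice the paper actually makes: split so that \emph{both} parts have length at least $3$, namely $\varphi(y_{i+1})=x_{i+1}x_ix_{i+2}$ and $\varphi(z_i)$ the remaining $L-3\geq 3$ letters. Then the letters flanking the internal junction are $x_{i+2}$ and $x_{i+1}$, both gap indices when $m\geq n+2$, and the two stray occurrences of $x_i$ and $x_q$ lie strictly inside the blocks of $y_{i+1}$ and $z_i$, where no image of a generator can end or begin adjacent to them. Note that a split with both parts of length at least $3$ exists if and only if $L\geq 6$, i.e.\ $m\geq n+2$, so the hypothesis of the lemma is consumed exactly by this repair; this is what the paper means when it says $\varphi(y_{i+1})$ is chosen to finish at a position not matching an occurrence of any $\varphi(x_j)$, $\varphi(y_i)$ or $\varphi(z_{i+1})$, and dually for the start of $\varphi(z_i)$. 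With that single correction (and noting your $m=n+1$ failure analysis is consistent with the paper's remark that $m=n+1$ only gives a quotient of a subsemiring), your proof becomes sound and coincides with the paper's.
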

\begin{proof}
We use $\varphi$ to denote the matching described in Lemma \ref{lem:matching}, and as $m\geq n+2$ we may split the word $x_{i+1}x_i\cdot x_{i+2}x_{i+1}\cdots x_{2+i-n}x_{1+i-n}$ as
\[
\phi(y_{i+1})=x_{i+1}x_{i}\cdot x_{i+2}\text{ and }\phi(z_i)=x_{i+1}\cdot x_{3+1}x_{i+2}\dots x_{2+i-n}x_{1+i-n}.
\]
Now, distinct subwords of $\bk_{n,i}$ correspond to distinct subwords of $\bell_m$ under $\varphi$.  Moreover it is easy to see by inspection that the nonzero products in $S(\bk_{n,i})$ are the only way to form nonzero products in $S(\bell_m)$ using the elements of the form $\varphi(a)$ for variable $a$ (that is, the subalgebra of $S(\bell_m)$ generated by the image under $\varphi$ of the generators of $S(\bk_{n,i})$).  To see this it is easy to use the bracket representation or to examine the displayed matching in the proof of Lemma \ref{lem:matching}.  In the displayed matching it can be seen that for any such generators $a,b$, the nonzero products $\varphi(a)\varphi(b)$ in $S(\bell_m)$ are when $ab$ is nonzero in $S(\bk_{n,i})$.  To argue via the bracket convention, notice that we have chosen $\varphi(y_{i+1})$ to finish at a position that does not match any an occurrence of any $\varphi(x_j)$ nor of $\varphi(y_i)$ and $\varphi(z_{i+1})$, and dually for the starting position of $\varphi(z_i)$.
\end{proof}
Lemma \ref{lem:mversusn} can be adapted to accommodate the case $m=n+1$ if we replace ``subsemiring'' with ``quotient of a subsemiring'', but as we do not need it here we do not give further details.

For $1<i<n-1$, the elements $x_i$ and $x_{i+1}$ are prime in $S(\bell_n)$ and $S(\bell_n)\backslash\{x_i,x_{i+1}\}$ is a subsemiring, which we denote by $T_i(\bell_n)$.   Similarly, we may let $T_i(\bk_{n,i})$ denote the subsemiring of $S(\bk_{n,i})$ on $S(\bk_{n,i})\backslash \{y_i,y_{i+1},z_i,z_{i+1}\}$: generated by all elements that are not individual letters of index $i$ or $i+1$.
\begin{lem}\label{lem:isom}
For all $1<i<n-1$ we have
$T_i(\bell_n)\cong T_i(\bk_{n,i})$.
\end{lem}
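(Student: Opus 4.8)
The plan is to construct an explicit bijection $\Phi \from T_i(\bell_n) \to T_i(\bk_{n,i})$ and check it preserves both operations, leaning entirely on the bracket representation. First I would pin down the two underlying sets. The words $\bell_n$ and $\bk_{n,i}$ have the same length and differ only at the four positions carrying the two occurrences of $x_i$ and the two occurrences of $x_{i+1}$ in $\bell_n$; in $\bk_{n,i}$ those four positions instead carry $y_i, z_i, y_{i+1}, z_{i+1}$. Passing to $T_i(\bell_n)$ deletes the single letters $x_i, x_{i+1}$, while passing to $T_i(\bk_{n,i})$ deletes exactly $y_i, y_{i+1}, z_i, z_{i+1}$. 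Consequently both semirings share the same set of nonletter elements, indexed by brackets $(s,t)$ with $s<t$ (contiguous position intervals, identified with subwords via the "appear just once" property), and the same set of surviving single letters, namely $\{x_1,\dots,x_n\}\setminus\{x_i,x_{i+1}\}$ (note $x_i,x_{i+1}$ occur nowhere in $\bk_{n,i}$). I would then define $\Phi$ to be the identity on surviving letters, to send the bracket $(s,t)$ of $\bell_n$ to the bracket $(s,t)$ of $\bk_{n,i}$, and to fix $0$; this is manifestly a bijection.

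Preservation of addition is automatic: in a flat semiring the sum of two distinct elements is the top element $0$ and a bijection fixing $0$ is therefore additive. The content lies in preservation of multiplication, where the bracket representation does the work. For two bracket arguments the product $(s_1,t_1)(s_2,t_2)$ is nonzero exactly when $s_2=t_1+1$, in which case it equals $(s_1,t_2)$; this rule is purely positional and identical across the two words, so $\Phi$ respects it. For the mixed products (letter times bracket, or bracket times letter) and the letter--letter products, I would argue that each such product is witnessed by a specific position of the word, the position immediately preceding or following the bracket, or the adjacent pair carrying the two letters, and that a surviving letter $u$ occupies that position in $\bell_n$ if and only if it occupies it in $\bk_{n,i}$, with the resulting bracket agreeing.

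The crux, and the one place requiring care, is precisely this last point, since a single letter may occur at several positions of the word, so letter-involving products are the only place where $S(\bell_n)$ and $S(\bk_{n,i})$ genuinely differ. The key observation is that the four positions at which the words disagree carry $x_i$ or $x_{i+1}$ in $\bell_n$ and carry $y_i, y_{i+1}, z_i, z_{i+1}$ in $\bk_{n,i}$; since every surviving letter lies in $\{x_1,\dots,x_n\}\setminus\{x_i,x_{i+1}\}$, it occupies none of these four positions in either word. Hence each surviving letter has literally the same set of occurrence positions in $\bell_n$ and in $\bk_{n,i}$, so the witnessing condition for any letter-involving product, together with the bracket it produces, agrees across the two words. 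This gives $\Phi(uv)=\Phi(u)\Phi(v)$ in the remaining cases and completes the verification that $\Phi$ is an isomorphism. I expect the only real subtlety to be the clean bookkeeping of these four positional cases; everything else is routine once the bracket representation is invoked.
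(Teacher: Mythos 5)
Your proof is correct and takes essentially the same approach as the paper, whose own (one-line) argument likewise rests on the bracket representation together with exactly your key observation: the removed generators are the only letters distinguishing the two words, so every surviving letter occupies the same positions in $\bell_n$ and $\bk_{n,i}$ while the bracket elements are positionally identical. Your write-up simply makes explicit the positional case analysis (bracket--bracket, letter--bracket, letter--letter, and flatness for addition) that the paper dismisses as a trivial consequence.
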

\begin{proof}
This is a trivial consequence of the bracket representation of $S(\bell_n)$ and $S(\bk_{n,i})$: we have removed the only generators that distinguished the two semirings: all remaining letter elements sit in the same positions in the words $\bell_n$ and $\bk_{n,i}$, and the bracket elements are identical in both.
\end{proof}
In the following lemma we allow $m=\omega$, but $n$ is finite.
\begin{lem}\label{lem:Leesubword}
If $n>2k+4$ and $m\geq n+2$ then all $k$-generated subsemirings of $S(\bell_n)$ lie in $\mathsf{V}(S(\bell_m))$.
\end{lem}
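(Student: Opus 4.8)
The plan is to show that any $k$-generated subsemiring $U$ of $S(\bell_n)$ embeds into $S(\bell_m)$; since varieties are closed under taking subalgebras, this immediately yields $U\in\mathsf{V}(S(\bell_m))$. The route will be to locate an index $i$ with $1<i<n-1$ for which $U$ actually lies inside the subsemiring $T_i(\bell_n)$, and then to compose the isomorphism $T_i(\bell_n)\cong T_i(\bk_{n,i})$ of Lemma \ref{lem:isom} with the embedding of $S(\bk_{n,i})$ into $S(\bell_m)$ supplied by Lemma \ref{lem:mversusn}.

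First I would record the structural fact that drives the whole argument: in $S(\bw)$ the product of two nonzero elements is either $0$ or a bracket of length at least $2$, so no single letter ever arises as a nontrivial product. Consequently the single-letter elements of $U$ are exactly those among the (at most $k$) chosen generators, so $U$ contains at most $k$ single letters. Write $L$ for the set of indices $c$ such that the letter $x_c$ occurs among these single-letter generators, so that $|L|\leq k$.

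Next comes the (routine) counting step. Call an index $i$ \emph{available} if $1<i<n-1$ and neither $i$ nor $i+1$ lies in $L$. Each element of $L$ forbids at most two values of $i$ (namely $i=c$ and $i=c-1$), so at most $2k$ of the $n-3$ candidate indices $\{2,\dots,n-2\}$ are unavailable; since $n>2k+4$ gives $n-3>2k+1$, an available $i$ exists. For such an $i$, no generator of $U$ equals the single letter $x_i$ or $x_{i+1}$, and since $T_i(\bell_n)=S(\bell_n)\setminus\{x_i,x_{i+1}\}$ is a subsemiring containing all of the generators, we conclude $U\subseteq T_i(\bell_n)$, i.e.\ $U$ is a subsemiring of $T_i(\bell_n)$.

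Finally I would assemble the embedding. By Lemma \ref{lem:isom}, $T_i(\bell_n)\cong T_i(\bk_{n,i})$, and the latter is a subsemiring of $S(\bk_{n,i})$; by Lemma \ref{lem:mversusn}, $S(\bk_{n,i})$ embeds into $S(\bell_m)$ for every $m\geq n+2$ (the case $m=\omega$ being covered since Lemma \ref{lem:matching} explicitly allows infinite $m$). Composing these maps embeds $U$ into $S(\bell_m)$, whence $U\in\mathsf{V}(S(\bell_m))$. The only genuine content beyond bookkeeping is the observation that single letters cannot be manufactured as products: this is what converts the finite bound $k$ on generators into a finite bound on the letters that must be avoided, and so is the step I would be most careful to state precisely. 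Everything else is a direct application of the preceding lemmas, so I do not anticipate a serious obstacle here.
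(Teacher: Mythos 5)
Your proof is correct and follows essentially the same route as the paper's: the paper likewise uses $n>2k+4$ to argue that a $k$-generated subsemiring must omit some consecutive pair $x_i,x_{i+1}$, hence sits inside $T_i(\bell_n)$, and then chains the isomorphism $T_i(\bell_n)\cong T_i(\bk_{n,i})$ of Lemma \ref{lem:isom} with the embedding $S(\bk_{n,i})\hookrightarrow S(\bell_m)$ of Lemma \ref{lem:mversusn}. Your explicit observation that single letters cannot arise as nontrivial products (so only the at most $k$ letter generators need to be avoided in the count) is exactly the step the paper leaves implicit, and your bookkeeping of the $m=\omega$ case via Lemma \ref{lem:matching} matches the paper's conventions.
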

\begin{proof}
As $n>2k+4$, every $k$-generated subsemiring of $S(\bell_n)$ must omit two consecutive generators from the list  $x_3,\dots,x_{n-2}$ and hence must be a subsemiring of $T_i(\bell_n)$ for some $i$.  Now, $T_i(\bell_n)$ is isomorphic to $T_i(\bk_{n,i})$ by Lemma \ref{lem:isom} and $T_i(\bk_{n,i})$ is a subsemiring of $S(\bk_{n,i})$, itself a subsemiring of $S(\bell_m)$ by Lemma \ref{lem:mversusn}.
\end{proof}

\begin{thm}\label{thm:Lee}
The semiring $S(\bell_\omega)$ is not finitely based as a semiring.  So too is the semiring $S(\{\bell_n\mid n\in M\})$, for any infinite subset $M\subsetneq \mathbb{N}$ with infinite complement.
\end{thm}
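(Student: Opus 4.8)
The plan is to use the compactness of equational logic together with the key separation machinery built up in Lemmas \ref{lem:leen} through \ref{lem:Leesubword}. The heart of the matter is the familiar "separating sequence" method: to show $S(\bell_\omega)$ is nonfinitely based, I will exhibit, for each $n$, an equation $\varepsilon_n$ that \emph{fails} on $S(\bell_\omega)$ but that \emph{holds} in every variety generated by all sufficiently-few-generated subsemirings, and then argue that no finite basis can capture all the $\varepsilon_n$ simultaneously. The natural choice for the failing equation is the $\bell_n$-free law itself. First I would verify that $S(\bell_\omega)$ \emph{fails} every $\bell_n$-free law: since $\bell_\omega$ literally contains every $\bell_n$ as a (finite) subword by construction, the bracket representation shows $S(\bell_\omega)$ realises a nonzero value of $\bell_n$, so the $\bell_n$-free law $\bell_n\approx \bell_n+z$ (etc.) is violated.

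\smallskip\noindent Next I would establish the crucial local property: every finitely generated subsemiring of $S(\bell_\omega)$ already lies in a variety that does satisfy cofinitely many $\bell_n$-free laws. Concretely, fix $k$ and take any $k$-generated subsemiring $U$ of $S(\bell_\omega)$. The generators of $U$ involve only finitely many letters $x_i$, so $U$ is in fact a subsemiring of $S(\bell_n)$ for some large finite $n$ (one restricts to a finite sub-block of the infinite word, using the start-and-finish-letter convention on infinite subwords). Choosing $n>2k+4$ and applying Lemma \ref{lem:Leesubword} with $m=\omega$ — precisely the case the lemma was stated to allow — gives that $U\in\mathsf{V}(S(\bell_\omega))$ in a way that is compatible with each finite approximation; more usefully, running Lemma \ref{lem:Leesubword} with a finite target $m$ shows each $k$-generated subsemiring of $S(\bell_n)$ embeds into $\mathsf{V}(S(\bell_m))$, and by Lemma \ref{lem:leen} that variety satisfies the $\bell_j$-free law for all $j\neq m$. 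This is the step that powers the argument: it lets me assert that the truth of the $\bell_n$-free law is only detectable using many generators, so a finite equational basis, having bounded variable count, cannot detect it for large $n$.

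\smallskip\noindent With these two ingredients I would finish by contradiction. Suppose $\mathsf{V}(S(\bell_\omega))$ had a finite basis $\Sigma_0$; let $k$ bound the number of variables appearing in $\Sigma_0$. Every identity in $\Sigma_0$ holds in $S(\bell_\omega)$, hence holds on all its $k$-generated subsemirings. Choose $n$ with $n>2k+4$ and so large that the separating data applies. By the local property, the $k$-generated subsemirings of $S(\bell_n)$ lie in a variety satisfying each member of $\Sigma_0$, whence $S(\bell_n)$ itself satisfies $\Sigma_0$ (since an identity in $k$ variables holds in an algebra iff it holds in all its $k$-generated subalgebras). But $S(\bell_n)$ fails the $\bell_n$-free law while $S(\bell_\omega)$ satisfies all consequences of $\Sigma_0$; since the $\bell_n$-free law must be derivable from $\Sigma_0$ (it holds in $S(\bell_\omega)$), this is a contradiction.

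\smallskip\noindent For the second assertion, about $S(\{\bell_n\mid n\in M\})$ with $M\subsetneq\mathbb{N}$ infinite and coinfinite, the same scheme applies verbatim with one adjustment. The separating equations are now the $\bell_n$-free laws for $n\notin M$ (there are infinitely many such $n$, using coinfiniteness of $M$), each of which holds in $S(\{\bell_n\mid n\in M\})$ by Lemma \ref{lem:leen} since $\bell_j$ with $j\in M$ is $\bell_n$-free when $n\neq j$; meanwhile the \emph{positive} side uses that each $\bell_n$ with $n\in M$ genuinely violates its own free law, so the candidate variety is strictly smaller and each $\bell_n$-free law with $n\in M$ fails. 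The infinitude of $M$ guarantees arbitrarily large such failing words, defeating any finite basis by the same variable-count argument. The main obstacle, and the place I would take most care, is the local step: one must argue cleanly that a $k$-generated subalgebra of the \emph{infinite} word semiring $S(\bell_\omega)$ is captured inside a finite $S(\bell_n)$, handling the infinite-subword convention correctly, and then chain Lemmas \ref{lem:isom}, \ref{lem:mversusn} and \ref{lem:Leesubword} together without an off-by-one slip in the bounds relating $k$, $n$ and $m$.
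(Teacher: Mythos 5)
Your proposal contains a fatal error in its very first step, and it is not a small slip: you assert that $S(\bell_\omega)$ \emph{fails} every $\bell_n$-free law because ``$\bell_\omega$ literally contains every $\bell_n$ as a (finite) subword by construction.'' This is false, and it is the exact opposite of the fact that the construction of $\bell_\omega$ is designed to deliver. The infinite word $\bell_\omega$ is $\bell_n$-free for every finite $n$: every nonletter subword of $\bell_\omega$ occurs just once, so any endomorphism carrying $\bell_n$ onto a subword of $\bell_\omega$ would have to send letters to letters, and the chain pattern of $\bell_\omega$ never ``closes'' the way $\bell_n$ does --- for instance, the length-$2n$ prefix of $\bell_\omega$ ends in $x_nx_{n-1}x_{n+1}$, not in $x_nx_{n-1}x_n$. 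Hence $S(\bell_\omega)$ \emph{satisfies} all the $\bell_n$-free laws, and this is precisely what the paper uses to conclude that no $S(\bell_n)$ lies in $\mathsf{V}(S(\bell_\omega))$, i.e.\ that there is a law of $S(\bell_\omega)$ failing on each $S(\bell_n)$. Note that your own concluding paragraph silently assumes the correct fact (``the $\bell_n$-free law must be derivable from $\Sigma_0$ (it holds in $S(\bell_\omega)$)''), flatly contradicting your opening paragraph; as written the proposal is internally inconsistent, and if your opening claim were true the argument could not even begin, since there would be no identity of $S(\bell_\omega)$ known to fail on $S(\bell_n)$.

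Your ``local step'' also runs in the wrong direction. You take a $k$-generated subsemiring $U$ of $S(\bell_\omega)$ and try to trap it inside some finite $S(\bell_n)$; besides being unnecessary, this can fail outright, because elements of $S(\bell_\omega)$ include infinite substrings with a start and an end letter (in the bracket representation, an element such as $(3,-2)$), so even a $1$-generated subsemiring of $S(\bell_\omega)$ need not embed into any $S(\bell_n)$. What the proof actually needs --- and what Lemma \ref{lem:Leesubword} with $m=\omega$ supplies --- is the reverse containment: for $n>2k+4$, all $k$-generated subsemirings of the \emph{finite-word} semiring $S(\bell_n)$ lie in $\mathsf{V}(S(\bell_\omega))$, so $S(\bell_n)$ satisfies every $k$-variable law of $S(\bell_\omega)$ while failing its own $\bell_n$-free law; as $k$ was arbitrary, no finite-variable (hence no finite) basis exists. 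Your sketch for $S(\{\bell_n\mid n\in M\})$ is closer to the mark: you correctly invoke coinfiniteness of $M$ to obtain arbitrarily large $n\notin M$, but the role of the infinitude of $M$ is not to supply ``failing words'' --- it is to supply $m\in M$ with $m\geq n+2$ so that Lemma \ref{lem:Leesubword} places the $k$-generated subsemirings of $S(\bell_n)$ inside $\mathsf{V}(S(\bell_m))\subseteq \mathsf{V}(S(\{\bell_j\mid j\in M\}))$. With the opening claim corrected and the local step reversed, your compactness/variable-bound scheme does coincide with the paper's proof.
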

\begin{proof}
No $S(\bell_n)$ is in the variety of $S(\bell_\omega)$ because $S(\bell_\omega)$ satisfies the $\bell_n$-free laws for every $n$.  Similarly, the variety of $S(\{\bell_n\mid n\in M\})$ omits $S(\bell_n)$ for every $n\notin M$.
Now let $k$ be an arbitrary positive integer.  We show that the $k$-variable laws of $S(\bell_\omega)$ and of $S(\{\bell_n\mid n\in M\})$ are not a basis for the full equational theory, from which it trivially follows that no finite basis is possible.  In the case of $S(\bell_\omega)$, this is because $S(\bell_n)$ satisfies all $k$-variable laws of $S(\bell_\omega)$ provided $n>2k+4$, by Lemma~\ref{lem:Leesubword}.   In the case of $S(\{\bell_n\mid n\in M\})$ choose any $n>2k+4$ with $n\notin M$.  Then $S(\bell_n)$ is not in the variety of $S(\{\bell_n\mid n\in M\})$ but we may choose $m\in M$ with $m\geq n+2$ so that all $k$-generated subsemirings of $S(\bell_n)$ lie in $S(\bell_m)$, a subvariety of $S(\{\bell_n\mid n\in M\})$.
\end{proof}

Semirings of the form $S(\bw)$ for words $\bw$ (and for sets of words) were suggested for exploration in \cite{jrz}, and Theorem \ref{thm:Lee} appears to provide further impetus for the potential complexity of variety type problems that might arise there.  We finish the section with some basic lemmas that might help guide future use of the construction.  The free ai-semiring $F_X$ freely generated by $X$ is isomorphic to the semiring of finite nonempty subsets of the free semigroup $X^+$, and all relatively free ai-semirings are quotients of this by way of a full invariant congruence.  Following a similar notion for semigroups, we say that a semigroup word $\bw$ is a \emph{semiring isoterm} for an ai-semiring $S$, if in the fully invariant congruence $\theta$ on $F_X$ for the variety of $S$, the element $\{\bw\}$  is in a singleton $\theta$-class.  The notion of semiring isoterm turns out to correspond to the notion of a \emph{minimal word} as defined by Dolinka in \cite{dol09}.  A word~$\bw$ is a minimal word for $S$ if $S\models \bv \lesssim\bw$ implies $\bw=\bv$ (where $s\lesssim t$ abbreviates the identity $s+t\approx t$).  To see this coincides with the semiring isoterm concept (for additively idempotent semirings), observe first that if $S\models \bv\lesssim \bw$ and $\bv\neq \bw$ then~$\{\bw\}$ is congruent to $\{\bw,\bv\}$ with respect to $\theta$.  Conversely, if $\{\bw\}$ is congruent to some finite set of words $A$, then any individual word $\bv\in A$ has $S\models \bv\lesssim \bw$. The article \cite{dol09} makes use of a stronger notion called \emph{isolated word}, but it is the notion of isoterm described here that enables the following direct analog of well known facts for semigroups.
\begin{lem}\label{lem:isoterm}
Let $\bw\in X^+$ be a semigroup word.  Then $\bw$ is an isoterm for an ai-semiring $S$ if and only if the flat semiring $S(\bw)$ is in the variety of $S$.
\end{lem}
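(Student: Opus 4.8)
The plan is to prove both implications using the \emph{minimal word} reformulation of isoterm recorded above, together with the natural evaluation homomorphism from the free ai-semiring onto $S(\bw)$. Write $Y$ for the (finite) set of variables occurring in $\bw$, and recall that the free ai-semiring $F_Y$ is the semiring of finite nonempty subsets of $Y^+$. Sending each $y\in Y$ to the single-letter element $y$ of $S(\bw)$ extends, by freeness, to a surjective homomorphism $\eta\colon F_Y\to S(\bw)$, since every element of $S(\bw)$ is a product of letters of $\bw$. Because $S(\bw)$ is flat with top element $0$, one computes directly that $\eta(\{u\})=u$ when $u$ is a nonempty subword of $\bw$ and $\eta(\{u\})=0$ otherwise, and that a set $A$ of words satisfies $\eta(A)=u\neq 0$ precisely when $A=\{u\}$ is a singleton whose word is a subword of $\bw$; in all other cases $\eta(A)=0$. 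This description of $\eta$ is the main computational tool for both directions.

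For the forward direction, suppose $S(\bw)\in\mathsf{V}(S)$ and that $S\models\bv\lesssim\bw$; by the minimal-word characterisation it suffices to deduce $\bv=\bw$. Since $S(\bw)$ lies in $\mathsf{V}(S)$ it also satisfies $\bv+\bw\approx\bw$. Evaluate this identity in $S(\bw)$ under the substitution sending each variable of $\bw$ to itself and every other variable to $0$. Then $\bw$ evaluates to the nonzero element $\bw$, while $\bv$ evaluates to $\bv$ if $\bv$ is a subword of $\bw$ and to $0$ otherwise. As $0$ is the top of the flat semilattice, the only element below the nonzero element $\bw$ is $\bw$ itself, so the evaluated identity forces $\bv$ to evaluate to $\bw$; by the previous sentence this is possible only if $\bv=\bw$.

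For the converse, assume $\bw$ is an isoterm for $S$. The first step is to observe that every nonempty subword $u$ of $\bw$ is again an isoterm for $S$: writing $\bw=\alpha u\beta$ and applying distributivity, any relation $S\models\bv\lesssim u$ yields $S\models\alpha\bv\beta\lesssim\bw$, whence $\alpha\bv\beta=\bw=\alpha u\beta$ by the isoterm property of $\bw$, and cancellation in the free semigroup gives $\bv=u$. The second step is to show that $\eta$ collapses every identity of $S$, i.e.\ that the fully invariant congruence $\theta$ of $\mathsf{V}(S)$, restricted to $F_Y$, is contained in $\ker\eta$. Given $(A,B)\in\theta$ (equivalently $S\models A\approx B$), the $\eta$-description reduces matters to a short case analysis on whether $\eta(A)$ and $\eta(B)$ are zero: if $\eta(A)=u\neq 0$ then $A=\{u\}$ with $u$ a subword, and the identity $S\models u\approx B$ forces each word of $B$ to lie below $u$, so by the isoterm property of the subword $u$ each such word equals $u$ and hence $\eta(B)=u$; the remaining cases are symmetric or trivial. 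Consequently $\theta\subseteq\ker\eta$, so $S(\bw)\cong F_Y/\ker\eta$ is a homomorphic image of the relatively free algebra $F_Y/\theta\in\mathsf{V}(S)$, giving $S(\bw)\in\mathsf{V}(S)$.

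The main obstacle is the converse direction, and specifically the reduction of ``$S\models A\approx B$ implies $\eta(A)=\eta(B)$'' to the one-variable-at-a-time isoterm property: the crucial leverage is that the isoterm property propagates to all subwords, which is exactly what lets us conclude that any word appearing below a subword $u$ of $\bw$ must coincide with $u$. Once that propagation is in hand, the flatness of $S(\bw)$ makes the relevant sums degenerate enough that the case analysis closes with no further work.
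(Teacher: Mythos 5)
Your proof is correct and takes essentially the same route as the paper's: both directions hinge on the same key step that the isoterm property passes to subwords via padding ($\bv\lesssim u$ gives $\alpha\bv\beta\lesssim\alpha u\beta=\bw$, then cancellation in the free semigroup), after which $S(\bw)$ is realised as a quotient of the relatively free ai-semiring on the variables of $\bw$, while the forward direction amounts to the observation that $\bw$ is an isoterm for $S(\bw)$ itself. Your explicit check that the fully invariant congruence is contained in $\ker\eta$ just spells out what the paper compresses into ``factor $F$ by the order ideal consisting of all elements except the singletons $\{\bw'\}$'', and your evaluation argument makes the paper's ``trivially true'' remark precise.
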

\begin{proof}
Let $F$ be the relatively free semiring for the variety of $S$ in a suitable generating set for $\bw$ and let $\theta$ be the corresponding fully invariant congruence giving rise to $F$ from the free ai-semiring $F_X$.  If $\bw$ is a semiring isoterm then so too are all subwords of $\bw$, because if $\bw=\bw_1\bw_2\bw_3$ (where $\bw_1$ and $\bw_2$ are possibly empty subwords) and $\bv\lesssim \bw_2$ holds then $\bw_1\bv\bw_3\lesssim \bw$ holds also.  So every subword $\bw'$ of~$\bw$ is in a singleton class of $\theta$ and we may factor  $F$ by the order ideal consisting of all elements except those of the form $\{\bw'\}$, where $\bw'$ is a subword of $\bw$.  This is isomorphic to $S(\bw)$.  For the other direction it is trivially true that $\bw$ is an isoterm for $S(\bw)$, from which it follows that $\bw$ is an isoterm for any variety containing~$S(\bw)$.
\end{proof}
Examples in the style of Jackson \cite{jac00} are easily translated using Lemma \ref{lem:isoterm}.  It will follow easily for example, that any ai-semiring variety in which all of the Sapir-Volkov words $\bs_n$ are semiring isoterms has a continuum of subvarieties.
\begin{example}
The variety generated by the semiring $M(abacdc)$ contains $S(\bs_n)$ for all $n$ and hence has a continuum of subvarieties.
\end{example}
\begin{proof}
For any semigroup word $\bv$ and list of variables $L$ we let $\bv(L)$ denote the result of deleting all letters except those in $L$.  Any evaluation of the letters in $\bv$ into the elements of $M(abacdc)$ determines an evaluation of $\bv(L)$ into $M(abacdc)$, where $L$ is the list of letters that are not assigned the value $1$.

For every pair of distinct letters $x,y$ in $\bs_n$ there is a list $L$ containing $x,y$ such that $\bs_n(L)$ is equal to a subword of $abacdc$ up to change of letter names:
\begin{itemize}
\item for $x,y\in \{x_1,\dots,x_{n+10}\}$ we have that $\bs_n(x,y)$ is equivalent up to letter names to the subword $ab$.
\item for $x,y\in\{y_0,\dots,y_{n+1}\}$ we may find $i,j\in\{1,\dots,n+10\}$ such that $x_i$ and $x_j$ occur between the two occurrences of $x,y$ and find that $\bs_n(x,y,x_i,x_j)$ is equivalent to $abacdc$ up to letter names.
\item for $x\in \{x_1,\dots,x_{n+10}\}$ and $y\in \{y_1,\dots,y_{n+1}\}$, if $x$ occurs between $y$ then $\bs_n(x,y)=yxy$ which is equivalent to $aba$; otherwise we may choose $x'\in \{x_1,\dots,x_{n+10}\}$ occurring between the two occurrences of $y$ and choose $y'\in  \{y_1,\dots,y_{n+1}\}$ occurring either side of the single occurrence of $x$ to obtain $\bs_n(x,y,x',y')\in\{y'xy'yx'y,yx'yy'xy'\}$, both of which are equivalent to $abacdc$ up to letter names.
\end{itemize}
If $\bw$ is a word for which $M(abacdc)\models \bw\lesssim \bs_n$ then $\bw$ cannot contain letters not occurring in $\bs_n$ because if $z$ appears in $\bw$ but not $\bs_n$ we may assign all letters in $\bs_n$ the value $1$ and $z$ the value $0$ to contradict $M(abacdc)\models \bw\lesssim \bs_n$.  So the alphabet of $\bw$ is contained in that of $\bs_n$.  The three dot points now ensure that $\bw(L)$ must coincide with $\bs_n(L)$ in each of the choices of $L$ in the three dots points.  This ensures that all of the letters of $\bs_n$ appear in $\bw$ and in the same pattern of occurrences.  From this, and the fact that $\bw$ contains no further letters, we can deduce that $\bw=\bs_n$, as required.
\end{proof}
The multiplicative reduct of $M(abacdc)$ generates a limit variety of monoids if the identity element is given the status of a nullary \cite[Proposition~5.1]{jac05}.  This result does not translate to semirings, because the semiring $M(a)$ (also known as $S_7$) is a subsemiring $M(abacdc)$ (even if $1$ is included as a nullary) and is nonfinitely based \cite[Corollary 4.11]{jrz}.  As a semiring (in signature $\{+,\cdot\}$), the variety generated by $M(a)$ also contains the limit variety $\mathsf{V}(S_c(abc))$ \cite[Lemma 2.10]{jrz}.  This implies that $\mathsf{V}(M(abacdc))$ contains more than one limit subvariety: that generated by $S_c(abc)$, and the limit variety sitting beneath the variety generated by $\{S(\bs_n)\mid n\in \mathbb{N}\}$.     It remains plausible that $M(a)$ (also known as $S_7$) generates a semiring variety with continuum many subvarieties.

\section{Conclusion}
We have found one infinite family of explicitly described limit varieties and one ad hoc example, as well as demonstrated the existence of one further family (continuum in cardinality) and of two further examples.

Understanding further limit varieties remains of interest.  In this regard it would be particularly interesting to explore subvarieties of the variety generated by the max-plus algebra, toward possible explicit identification of a limit subvariety. A related problem is to precisely identify a finitely generated limit variety of ai-semirings that is not generated by a flat algebra (cf.~Theorem \ref{thm:maxplus}).

The dual approach to identifying limit varieties is to broadly describe some general laws that guarantee finite axiomatisability for varieties.  While there are a number of initial efforts in this direction, there is no candidate analogue of even the earliest results of this kind due to Perkins for semigroups \cite{per}: every  variety of commutative semigroups and every  periodic variety of permutative semigroups have a finite identity basis.  This perhaps reflects the possibility that many more ai-semirings are without a finite basis than the corresponding situation for semigroups, though it is premature to make a clear conjecture on this, nor is it clear how to formulate such an idea precisely.

\bibliographystyle{amsplain}


\end{document}